\documentclass[12pt]{amsart}
\usepackage{amsmath,amsthm,amsfonts,amssymb,latexsym, mathabx}
\usepackage{enumerate}
\usepackage{color}
\usepackage{graphicx,scalerel}
\usepackage{lmodern}

\headheight=7pt
\textheight=574pt
\textwidth=432pt
\topmargin=14pt
\oddsidemargin=18pt
\evensidemargin=18pt

\begin{document}

\theoremstyle{plain}

\newtheorem{thm}{Theorem}[section]
\newtheorem{lem}[thm]{Lemma}
\newtheorem{pro}[thm]{Proposition}
\newtheorem{hyp}[thm]{Hypotheses}
\newtheorem{cor}[thm]{Corollary}
\newtheorem*{conj}{Conjecture}
\newtheorem*{thmA}{Theorem A}

\theoremstyle{definition}
\newtheorem{rem}[thm]{Remark}
\theoremstyle{definition}
\newtheorem{defi}[thm]{Definition}
\theoremstyle{definition}
\newtheorem{ex}[thm]{Example}
\theoremstyle{definition}
\newtheorem{prob}[thm]{Problem}

\newcommand{\Maxn}{\operatorname{Max_{\textbf{N}}}}
\newcommand{\Syl}{\operatorname{Syl}}
\newcommand{\dl}{\operatorname{dl}}
\newcommand{\Con}{\operatorname{Con}}
\newcommand{\cl}{\operatorname{cl}}
\newcommand{\Stab}{\operatorname{Stab}}
\newcommand{\Aut}{\operatorname{Aut}}
\newcommand{\Ker}{\operatorname{Ker}}
\newcommand{\fl}{\operatorname{fl}}
\newcommand{\Irr}{\operatorname{Irr}}
\newcommand{\SL}{\operatorname{SL}}
\newcommand{\FF}{\mathbb{F}}
\newcommand{\NN}{\mathbb{N}}
\newcommand{\N}{\mathbf{N}}
\newcommand{\C}{\mathbf{C}}
\newcommand{\OO}{\mathbf{O}}
\newcommand{\F}{\mathbf{F}}
\newcommand\wh[1]{\hstretch{2}{\hat{\hstretch{.5}{#1}}}}

\renewcommand{\labelenumi}{\upshape (\roman{enumi})}

\newcommand{\PSL}{\operatorname{PSL}}
\newcommand{\PSU}{\operatorname{PSU}}

\providecommand{\V}{\mathrm{V}}
\providecommand{\E}{\mathrm{E}}
\providecommand{\ir}{\mathrm{Irr_{rv}}}
\providecommand{\Irrr}{\mathrm{Irr_{rv}}}
\providecommand{\re}{\mathrm{Re}}

\def\Z{{\mathbb Z}}
\def\C{{\mathbb C}}
\def\Q{{\mathbb Q}}
\def\irr#1{{\rm Irr}(#1)}
\def\irrv#1{{\rm Irr}_{\rm rv}(#1)}
\def \c#1{{\mathcal #1}}
\def\cent#1#2{{\bf C}_{#1}(#2)}
\def\syl#1#2{{\rm Syl}_#1(#2)}
\def\nor{\triangleleft\,}
\def\oh#1#2{{\bf O}_{#1}(#2)}
\def\Oh#1#2{{\bf O}^{#1}(#2)}
\def\zent#1{{\bf Z}(#1)}
\def\det#1{{\rm det}(#1)}
\def\ker#1{{\rm ker}(#1)}
\def\norm#1#2{{\bf N}_{#1}(#2)}
\def\alt#1{{\rm Alt}(#1)}
\def\iitem#1{\goodbreak\par\noindent{\bf #1}}
   \def \mod#1{\, {\rm mod} \, #1 \, }
\def\sbs{\subseteq}

\def\gc{{\bf GC}}
\def\ngc{{non-{\bf GC}}}
\def\ngcs{{non-{\bf GC}$^*$}}
\newcommand{\notd}{{\!\not{|}}}

\def\aut#1{{\rm Aut}(#1)}
\def\inn#1{{\rm Inn}(#1)}
\def\out#1{{\rm Out}(#1)}

\newcommand{\Mult}{{\mathrm {Mult}}}
\newcommand{\Inn}{{\mathrm {Inn}}}
\newcommand{\IBR}{{\mathrm {IBr}}}
\newcommand{\IBRL}{{\mathrm {IBr}}_{\ell}}
\newcommand{\IBRP}{{\mathrm {IBr}}_{p}}
\newcommand{\ord}{{\mathrm {ord}}}
\def\id{\mathop{\mathrm{ id}}\nolimits}
\renewcommand{\Im}{{\mathrm {Im}}}
\newcommand{\Ind}{{\mathrm {Ind}}}
\newcommand{\diag}{{\mathrm {diag}}}
\newcommand{\soc}{{\mathrm {soc}}}
\newcommand{\End}{{\mathrm {End}}}
\newcommand{\sol}{{\mathrm {sol}}}
\newcommand{\Hom}{{\mathrm {Hom}}}
\newcommand{\Mor}{{\mathrm {Mor}}}
\newcommand{\Mat}{{\mathrm {Mat}}}
\def\rank{\mathop{\mathrm{ rank}}\nolimits}
\newcommand{\Tr}{{\mathrm {Tr}}}
\newcommand{\tr}{{\mathrm {tr}}}
\newcommand{\Gal}{{\it Gal}}
\newcommand{\Spec}{{\mathrm {Spec}}}
\newcommand{\ad}{{\mathrm {ad}}}
\newcommand{\Sym}{{\mathrm {Sym}}}
\newcommand{\Char}{{\mathrm {char}}}
\newcommand{\pr}{{\mathrm {pr}}}
\newcommand{\rad}{{\mathrm {rad}}}
\newcommand{\abel}{{\mathrm {abel}}}
\newcommand{\codim}{{\mathrm {codim}}}
\newcommand{\ind}{{\mathrm {ind}}}
\newcommand{\Res}{{\mathrm {Res}}}
\newcommand{\Ann}{{\mathrm {Ann}}}
\newcommand{\Ext}{{\mathrm {Ext}}}
\newcommand{\Alt}{{\mathrm {Alt}}}
\newcommand{\AAA}{{\sf A}}
\newcommand{\SSS}{{\sf S}}
\newcommand{\h}{{\mathcal H}}
\newcommand{\CC}{{\mathbb C}}
\newcommand{\CB}{{\mathbf C}}
\newcommand{\RR}{{\mathbb R}}
\newcommand{\QQ}{{\mathbb Q}}
\newcommand{\ZZ}{{\mathbb Z}}
\newcommand{\NB}{{\mathbf N}}
\newcommand{\OB}{{\mathbf O}}
\newcommand{\ZB}{{\mathbf Z}}
\newcommand{\EE}{{\mathbb E}}
\newcommand{\PP}{{\mathbb P}}
\newcommand{\GC}{{\mathcal G}}
\newcommand{\HC}{{\mathcal H}}
\newcommand{\GA}{{\mathfrak G}}
\newcommand{\TC}{{\mathcal T}}
\newcommand{\SC}{{\mathcal S}}
\newcommand{\RC}{{\mathcal R}}
\newcommand{\GCD}{\GC^{*}}
\newcommand{\TCD}{\TC^{*}}
\newcommand{\FD}{F^{*}}
\newcommand{\GD}{G^{*}}
\newcommand{\HD}{H^{*}}
\newcommand{\GCF}{\GC^{F}}
\newcommand{\TCF}{\TC^{F}}
\newcommand{\PCF}{\PC^{F}}
\newcommand{\GCDF}{(\GC^{*})^{F^{*}}}
\newcommand{\RGTT}{R^{\GC}_{\TC}(\theta)}
\newcommand{\RGTA}{R^{\GC}_{\TC}(1)}
\newcommand{\Om}{\Omega}
\newcommand{\eps}{\epsilon}
\newcommand{\al}{\alpha}
\newcommand{\chis}{\chi_{s}}
\newcommand{\sigmad}{\sigma^{*}}
\newcommand{\PA}{\boldsymbol{\alpha}}
\newcommand{\gam}{\gamma}
\newcommand{\lam}{\lambda}
\newcommand{\la}{\langle}
\newcommand{\ra}{\rangle}
\newcommand{\hs}{\widehat{s}}
\newcommand{\htt}{\widehat{t}}
\newcommand{\tn}{\hspace{0.5mm}^{t}\hspace*{-0.2mm}}
\newcommand{\ta}{\hspace{0.5mm}^{2}\hspace*{-0.2mm}}
\newcommand{\tb}{\hspace{0.5mm}^{3}\hspace*{-0.2mm}}
\def\skipa{\vspace{-1.5mm} & \vspace{-1.5mm} & \vspace{-1.5mm}\\}
\newcommand{\tw}[1]{{}^#1\!}
\renewcommand{\mod}{\bmod \,}

\marginparsep-0.5cm

\renewcommand{\thefootnote}{\fnsymbol{footnote}}
\footnotesep6.5pt

\title{A Reduction Theorem for the Galois--McKay Conjecture}

\author{Gabriel Navarro}
\address{Departament de Matem\`atiques, Universitat de Val\`encia, 46100 Burjassot, Val\`encia, Spain}
\email{gabriel.navarro@uv.es}
\author{Britta Sp\"ath}
\address{BU Wuppertal, Gau\ss str. 20, 42119 Wuppertal, Germany} 
\email{bspaeth@uni-wuppertal.de}
\author{Carolina Vallejo}
\address{Departamento de Matem\'aticas,  
Universidad Aut\'onoma de Madrid, Campus de Cantoblanco, 28049 Madrid, Spain}
\email{carolina.vallejo@uam.es}
\thanks{The research of the first-named author is partially supported by the Prometeo/Generalitat Valenciana,
Ministerio
de Econom\'ia y Competividad MTM2016-76196-P and FEDER Funds.
The research  of the second-named author is also supported by the research training group
\emph{GRK 2240: Algebro-Geometric Methods in Algebra, Arithmetic and Topology}, funded by the DFG.
The third-named author acknowledges support by MTM2016-76196-P  and the ICMAT Severo Ochoa
project SEV-2011-0087}

\keywords{Galois action on characters, Galois--McKay conjecture, reduction theorem}

\subjclass[2010]{Primary 20C15; Secondary 20C25}

\begin{abstract}
We introduce $\c H$-triples and a partial order relation on them, generalizing the theory of ordering character triples developed by Navarro and Sp\"ath. This generalization takes into account the action of Galois automorphisms on characters and, together with previous results of Ladisch and Turull, allows us to reduce the Galois--McKay conjecture to a question about simple groups.
 \end{abstract}

\maketitle
\bigskip

\def\C{{\Bbb C}}
\def\irr#1{{\rm Irr}(#1)}
\def\irrp#1{{\rm Irr}_{p'}(#1)}
\def\irrpr#1{{\rm Irr}_{p'}^{{\rm rel}}(#1)}
\def\irrpq#1{{\rm Irr}_{p', \Q_{p'}}(#1)}
\def\irrpa#1{{\rm Irr}_{p', \mathcal A}(#1)}
\def\cent#1#2{{\bf C}_{#1}(#2)}
\def\syl#1#2{{\rm Syl}_#1(#2)}
\def\nor{\triangleleft\,}
\def\zent#1{{\bf Z}(#1)}
\def\norm#1#2{{\bf N}_{#1}(#2)}
\def\oh#1#2{{\bf O}_{#1}(#2)}
\def\iitem#1{\goodbreak\par\noindent{\bf #1}}

\def\irrp#1{{\rm Irr}_{p'}(#1)}

\section*{Introduction}
The origin of the McKay conjecture dates back to a paper of John McKay from 1972 (\cite{McK72}), 
where it is stated for finite simple groups and for $p=2$.
\begin{conj}[The McKay conjecture]
Let $G$ be a finite group, let $p$ a prime and let $H=\norm GP$ be the normalizer in $G$
of a Sylow $p$-subgroup $P$
of $G$. Then $$|{\rm Irr}_{p'}(G)|=|{\rm Irr}_{p'}(H)| \, ,$$ where 
${\rm Irr}_{p'}(G)$ is the set of irreducible complex characters of $G$
of degree not divisible by $p$. 
\end{conj}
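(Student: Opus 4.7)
The plan is to attack the McKay conjecture by reducing it to a question about finite simple groups, in the spirit of the programme initiated by Isaacs--Malle--Navarro. I would take a counterexample $G$ of minimal order, let $P\in\syl{p}{G}$ and $H=\norm{G}{P}$, and use Clifford theory to decompose both sides of the desired equality $|\irrp{G}|=|\irrp{H}|$ along a minimal normal subgroup $N$ of $G$. The goal is to produce a bijection
\[
  \irrp{G} \longrightarrow \irrp{H}
\]
fiber by fiber over the $G$-orbits on $\irr{N}$. The essential tool is the theory of character triples and the partial order $\geq_c$ on them due to Navarro--Sp\"ath: given $\theta\in\irr{N}$ with inertia group $G_\theta$, one replaces the triple $(G_\theta,N,\theta)$ by an equivalent triple over a section of $G$ that is, after successive reductions, quasi-simple. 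This concentrates the whole problem onto the behaviour of $p'$-characters of covering groups of simple groups.

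The reduction step then yields, for each simple group $S$ arising as a composition factor of $G$, a concrete requirement -- the \emph{inductive McKay condition}: there exists an $\Aut(S)$-equivariant bijection between $\irrp{\tilde S}$ and $\irrp{\norm{\tilde S}{\tilde P}}$, for $\tilde S$ a universal cover of $S$ and $\tilde P\in\syl{p}{\tilde S}$, which preserves central characters and is compatible with the $\geq_c$ ordering on the associated triples. Verifying this condition for every pair $(S,p)$ is the second half of the argument: alternating groups through the combinatorics of $p$-hooks on partitions together with the Glauberman correspondence, sporadic groups by direct inspection of the character tables, and finite reductive groups via Deligne--Lusztig theory, Jordan decomposition of characters, and the parameterisation of $p'$-characters of Sylow normalisers by $d$-split Levi subgroups and $e$-cuspidal pairs.

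The main obstacle, as always in this circle of ideas, is the verification of the inductive condition for groups of Lie type at non-defining primes. One must simultaneously construct the bijection on characters and prove it equivariant under the full outer automorphism group, which mixes diagonal, field and graph automorphisms acting non-trivially on both sides. The thorniest input I expect is controlling the extendibility of characters to stabilisers inside $\norm{\tilde S}{\tilde P}$: precisely where extendibility fails, the $\geq_c$-ordering records $2$-cocycles on Schur multipliers, and handling these for the exceptional Weyl groups of $E_7$ and $E_8$ at small primes appears to demand both a case-by-case analysis and non-trivial cohomological vanishing statements. In practice I would not try to write down a canonical bijection, but rather prove its existence one family at a time, leveraging the full machinery of equivariant Clifford theory assembled for the reduction step.
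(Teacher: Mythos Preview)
The statement you are proposing to prove is labeled \emph{Conjecture} in the paper, and the paper does not prove it; the McKay conjecture appears only in the introduction as background for the Galois--McKay refinement, whose reduction to simple groups is the actual content of the paper. So there is no ``paper's own proof'' to compare against.

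What you have written is not a proof but an outline of the Isaacs--Malle--Navarro programme: reduce to an inductive condition on simple groups, then verify case by case. That is indeed the correct and known strategy, and you describe its shape accurately. But the decisive gap is the one you yourself flag at the end: the verification of the inductive McKay condition for all finite simple groups at all primes. At the time this paper was written that verification was incomplete (Malle--Sp\"ath \cite{MS16} had settled $p=2$, but odd primes remained open for several Lie-type families), and your sketch does not supply the missing cases --- it only names where the difficulties lie. A proof proposal that ends with ``prove its existence one family at a time, leveraging the full machinery'' is a plan, not an argument; the hard content is precisely in carrying out those verifications, each of which is a substantial paper in its own right.

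In short: the approach is the right one and matches the literature, but it is not a proof of the conjecture, and the paper makes no claim to give one.
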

In 2007, Martin Isaacs, 
Gunter Malle and the first-named author reduced the McKay conjecture to a problem
about simple groups in \cite{IMN}. 
Using this reduction theorem, G. Malle and the 
second-named author have recently proven that the McKay conjecture holds
for all finite groups for $p=2$ in \cite{MS16}. 

\medskip

In 2004, the first-named author predicted
that not only the degrees of the complex
characters of $G$ and $H$ were related but also their values (see \cite{Nav04}).
For a fixed prime $p$, let $\c H$ be the subgroup of $\mathcal G={\rm Gal}( \Q^{\rm ab}/\Q)$
  consisting of the $\sigma \in \c G$ for which there exists an integer $f$  such that
  $\sigma(\xi)=\xi^{p^f}$ for  every root of unity $\xi$ of order not divisible by $p$. 
  
\begin{conj}[The Galois--McKay conjecture]
Let $G$ be a finite group, let $p$ be a prime, and let $H=\norm GP$ be the normalizer in $G$
of a Sylow $p$-subgroup $P$
of $G$. 
Then the actions of $\c H$ on $\irrp G$ and $\irrp H$ 
are permutation isomorphic.
\end{conj}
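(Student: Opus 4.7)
The Galois--McKay conjecture is a significant open problem, and my plan follows the blueprint that has been successful for the ordinary McKay conjecture in \cite{IMN}: a Galois-equivariant reduction to a question about finite simple groups, followed by a case-by-case verification invoking the classification. Thus there are two large components, which I would pursue in parallel.

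For the reduction, I would upgrade the theory of ordering character triples so that it tracks not only the central extensions governing projective representations but also the action of $\c H$, building on Turull's Brauer--Clifford theory and work of Ladisch. Concretely, I would introduce a notion of $\c H$-triple $(G,N,\theta)$ carrying a compatible action of the stabilizer $\c H_\theta$, together with a partial order refining the Navarro--Sp\"ath ordering, and then reduce the problem to showing that every $\c H$-triple over a quasi-simple group admits a smaller one in this order. I would argue by induction on $|G|$, choosing a $P$-invariant normal subgroup, applying Clifford theory above and below, and using an $\c H$-equivariant Glauberman--Isaacs correspondence to pass characters across the normalizer. The residual requirement on each simple group $S$ would then be an inductive $\c H$-McKay condition: a bijection $\irrp{\widetilde{S}}\to\irrp{\norm{\widetilde{S}}{P}}$ for a universal cover $\widetilde{S}$ that is simultaneously $\Aut(S)$-equivariant and $\c H$-equivariant and preserves the relevant cohomology classes, in the spirit of the IMN inductive McKay condition enriched with Galois data.

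Verifying this inductive condition for every finite simple group is the second major component. Sporadic groups are handled computationally from their character tables, tracking the $\c H$-action on covering groups. For alternating groups, the McKay bijection arises from the $p$-core construction on partitions, and Galois equivariance can be extracted from the Murnaghan--Nakayama rule together with explicit descriptions of the fields of values of characters of $S_n$ and $A_n$. For simple groups of Lie type in defining characteristic $p$, the Sylow normalizer is a Borel subgroup and the bijection follows from Steinberg's theory; $\c H$-equivariance is transparent since $\c H$ acts through Galois conjugation on semisimple classes of the dual group. For simple groups of Lie type in cross-characteristic one would rely on Jordan decomposition of characters and the parametrization of $p'$-characters via Lusztig series, choosing representatives so that the $p$-power Frobenius action matches across the bijection.

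The hardest step, and the one I expect to be the main obstacle, is precisely this last case. An automorphism-equivariant McKay bijection for groups of Lie type in non-defining characteristic is already the subject of active research, and layering the Galois requirement forces one to choose representatives of Lusztig series coherently across all twisted forms. Ladisch--Turull-type rationality statements for Deligne--Lusztig virtual characters should be instrumental, but the bookkeeping of Schur indices and cohomology classes under the combined action of outer automorphisms and Galois automorphisms looks genuinely delicate, and I expect this is where most of the creative work lies.
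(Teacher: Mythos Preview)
The statement you are attempting to prove is a \emph{conjecture}, and the paper does not prove it. What the paper establishes is Theorem~A: a reduction theorem asserting that the Galois--McKay conjecture holds for $G$ provided every simple group involved in $G$ satisfies the inductive Galois--McKay condition (Definition~\ref{inductive}). The paper explicitly leaves the verification of that condition for simple groups to future work.

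Your proposal is therefore not a proof but a two-part research program. The first half --- the reduction --- tracks the paper's actual content quite closely: you correctly anticipate the need to enrich the Navarro--Sp\"ath ordering of character triples with $\c H$-data (the paper's $\c H$-triples and the relation $\geq_c$ of Definition~\ref{Hiso}), to invoke Ladisch's and Turull's rationality results (Theorems~\ref{ladisch} and~\ref{turull}), and to argue by induction using Clifford theory above a chief factor. Two small refinements relative to your sketch: the induction is on $|G:Z|$ in a relative setting (working with $\irrpr{G|\lambda^{\c H}}$ over a normal $Z$ rather than with $\irrp{G}$ directly), and the $p$-solvable case is not reproved but is imported wholesale from Turull's work on the Glauberman correspondence.

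The second half --- verifying the inductive condition for all finite simple groups --- is the genuine gap. You identify it yourself as ``the hardest step,'' and indeed it is not carried out here or anywhere in the literature at the time of this paper; only isolated families are announced (e.g.\ \cite{Spa19} for $\PSL_2(q)$). Your sketches for sporadic, alternating, and defining-characteristic Lie type groups are plausible, but the cross-characteristic case is, as you say, wide open, and a ``plan to choose representatives of Lusztig series coherently'' is not a proof. So while your reduction strategy is essentially the one the paper executes, your overall proposal does not constitute a proof of the conjecture, and the paper makes no claim to have one either.
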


As a matter of fact,  this conjecture is stated only for cyclic
subgroups of $\c H$ in Conjecture A of \cite{Nav04},
but it is suggested in the above more general form at the end of the same paper.
The Galois--McKay conjecture as 
stated above is equivalent to the existence of a McKay bijection
preserving fields of values of characters over the field $\Q_p$ of $p$-adic numbers. 
Recall that if $\Q \sbs \mathbb F$ is a field extension and $\chi$ is a character
of a group $G$, then the field of values $\mathbb F (\chi)$ of $\chi$ over $\mathbb F$ is 
obtained by adjoining to $\mathbb F$ all the values of $\chi$. The conjecture appeared
in this latter form in \cite{T08} (also including local Schur indices).

\medskip

The  Galois--McKay conjecture has been proved for
$p$-solvable groups in \cite{T1} and for alternating groups in \cite{Nath} and \cite{BN}. 
 It has been established 
 for groups with cyclic Sylow $p$-subgroups  in \cite{Nav04};
and  for groups of Lie type in  
 defining characteristic in \cite{Ruh}.  For sporadic groups,
it can now be easily checked with \cite{GAP}. 
 
Also, some of its main consequences
have been obtained since its formulation.
For instance, in \cite{NTT07} it was proven that, for $p$ odd, $\norm GP=P$
if, and only if, $G$ has no non-trivial $p$-rational valued irreducible
character of $p'$-degree. 
More recently, for $p=2$, 
it has been proved in
\cite{SF18} that $\norm GP=P$ if, and only if, all the odd-degree irreducible
characters of $G$ are fixed
by $\sigma_0 \in \c H$, where $\sigma_0$ squares odd roots of unity and fixes 2-power roots of unity. Some other consequences, 
such as determining the exponent of 
 $P/P'$ from the character table have been treated 
recently in \cite{NT18}. In particular, we now know 
that the character table determines the exponent of the 
abelianization of a Sylow 2-subgroup thanks to \cite{NT18} and \cite{M19}.
 In all these papers,
ad-hoc reductions to simple groups
 have been provided for fixed elements $\sigma \in \c H$, and then the
 classification of finite simple groups has been used 
 to prove the theorems. However,
 the Galois--McKay conjecture has eluded a
 general reduction until now.
 The following is the main result of this paper.
 We recall that a simple group $S$ is {\bf involved} in $G$ if 
 $S\cong K/N$ for some $N\nor K \leq G$.
 
 \begin{thmA}
  Suppose that $G$ is a finite group, and $p$ is a prime.
 If all simple groups involved in $G$ satisfy the inductive 
 Galois--McKay condition for $p$ (Definition \ref{inductive}),
 then the Galois--McKay conjecture holds for $G$ and $p$.
 \end{thmA}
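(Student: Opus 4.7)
The plan is to argue by contradiction with a minimal counterexample. Suppose $(G,p)$ is a counterexample with $|G|$ as small as possible, while every simple group involved in $G$ satisfies the inductive Galois--McKay condition of Definition \ref{inductive}. What one actually should prove is not the bare statement $|\irrp G|=|\irrp{\norm GP}|$ with $\c H$-equivariance, but a considerably stronger relative version framed in terms of the $\c H$-triples introduced earlier: for every suitable triple attached to a proper section of $G$ one can produce a partner $\c H$-triple above it in the order, from which the desired $\c H$-equivariant bijection between the corresponding $p'$-characters lying above prescribed data follows as a byproduct. The minimality of $G$ then makes this relative form available to every proper section.

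Next I would pick a minimal normal subgroup $N\nor G$ and split into the usual three cases. If $N$ is a $p'$-group, I would fix $\theta\in\irr N$ and use the Clifford correspondence between $\irrp{G|\theta}$ and $\irrp{G_\theta|\theta}$; the definition of the ordering on $\c H$-triples is designed precisely so that this correspondence lifts to an $\c H$-equivariant statement and so that on the quotients $G_\theta/N$ and a corresponding section of $\norm GP$ the inductive hypothesis applies. If $N$ is a $p$-group then $N\leq P\leq\norm GP$, and reducing modulo $N$ together with the $\c H$-triple machinery and the results of Ladisch--Turull on Galois-equivariant projective representations transports the bijection from $G/N$ to $G$. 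In both cases the key input is that the partial order on $\c H$-triples is preserved under Clifford correspondence, inflation, and restriction to $p'$-sections, so that smaller $\c H$-triples are directly comparable with the ones coming from $\norm GP$.

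The main obstacle is the remaining case, in which $N=S_1\times\cdots\times S_k$ is a direct product of copies of a non-abelian simple group $S$, for this is where the inductive Galois--McKay condition on $S$ must be invoked in a genuinely external way. I would take $\theta=\theta_1\times\cdots\times\theta_k\in\irrp N$, apply the inductive condition to $\theta_1$ in order to build inside a suitable extension $\tilde S$ of $S_1$ an $\c H$-equivariant bijection covering the action of $\aut{S_1}_{\theta_1}$, and then glue the $k$ factor bijections together equivariantly with respect to the permutation action of $G_\theta$ on $\{S_1,\dots,S_k\}$ to obtain a global datum over $N$. The technical heart of the argument is to verify that this glued datum is indeed an $\c H$-triple lying above $(G_\theta,N,\theta)$ in the sense of the paper; this is where the definition of the inductive condition has to be calibrated exactly, and where the delicate compatibility between the action of $\c H$ and the action of $\aut N$ on projective representations becomes critical. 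Once that comparison is established, the $\c H$-triple ordering transports the bijection down to $\irrp{\norm GP|\theta'}$ for a partner $\theta'$, and summing over the $G$-orbits on $\irr N$ assembles a global $\c H$-equivariant bijection, contradicting that $G$ was a counterexample.
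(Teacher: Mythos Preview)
Your outline has the right overall architecture---prove a relative statement by induction and split over the type of a chief factor---but there are two genuine gaps that would prevent the argument from going through as written.

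First, your treatment of the case where the minimal normal subgroup $N$ is a $p'$-group is incorrect. You write that after Clifford correspondence one can ``apply the inductive hypothesis on the quotients $G_\theta/N$ and a corresponding section of $\norm GP$''. But characters in $\irrp{G_\theta|\theta}$ do not factor through $G_\theta/N$ unless $\theta=1_N$, so there is no quotient to which the inductive hypothesis applies. In the paper this case is handled by an entirely different and much deeper input: Turull's theorem (Theorem~\ref{turull}) that the Glauberman correspondence between $Q$-invariant characters of $K$ and characters of $\cent KQ$ lifts to an $\c H$-equivariant bijection at the level of $\irr{G|\theta}$ and $\irr{\norm GQ|\hat\theta}$. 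This is not a formal consequence of the $\c H$-triple formalism; it is an external theorem whose proof occupies several of Turull's papers, and without it the $p'$-case simply does not close.

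Second, you misplace the role of Ladisch's result. In the paper it is not invoked in the $p$-group case (which in the correct relative set-up is vacuous: if the chief factor $L/Z$ is a $p$-group then $L\le PZ$ forces $G=H$). Rather, Ladisch's theorem (Theorem~\ref{ladisch}) is applied at the outset of the induction (Step~2 of Theorem~\ref{reduction}) to replace the relative $\c H$-triple $(G,Z,\lambda)_{\c H}$ by one in which there is a cyclic central subgroup $C\le Z$ with a faithful $\nu$ inducing $\lambda$. This central structure is exactly what makes Turull's Glauberman-type theorem applicable in the $p'$-case and what makes Corollary~\ref{centralextension} applicable in the non-abelian case. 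Your formulation of the relative inductive statement (``produce a partner $\c H$-triple above it in the order'') is also off: the statement actually carried through the induction is the existence of an $\c H$-equivariant bijection $\irrpr{G|\lambda^{\c H}}\to\irrpr{\norm GP\,Z|\lambda^{\c H}}$; the ordering on $\c H$-triples is a tool used inside the non-abelian step, not the inductive hypothesis itself.
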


One of the main differences between our reduction theorem and the 
reduction theorem for the McKay conjecture is that we cannot make use of the 
general theory of character triples and character triple isomorphisms, since
these do not preserve in general fields of values.
We remedy this by introducing the notion of $\c H$-triples in Section \ref{Htriples}.
There we also introduce a partial order relation between $\c H$-triples that allows us 
to construct $\c H$-equivariant bijections between character sets. The original
partial order relation between character triples that we now generalize and whose use
is crucial in this work was introduced
in \cite{NS14}. Here we mostly refer to the exposition given in  \cite{Nav18}.
 In Section 
\ref{newfromold} we study how to construct new ordered $\c H$-triple pairs 
from old ones. In Section \ref{inductivecondition} we give the inductive 
Galois--McKay condition that we expect all finite simple groups to satisfy. 
Finally in Section \ref{thereduction}, we prove Theorem A relying on 
key results due to Friedrich Ladisch and Alexandre Turull. 
We care to remark that our Theorem A does not 
provide a different proof of the $p$-solvable case of the
Galois--McKay conjecture as our method depends on the
study of the character theory over Glauberman
correspondents that has been carried out by A. Turull in different papers.

\medskip

The verification of the inductive Galois--McKay condition 
for finite simple groups
brings up a new challenge, as it requires a vast knowledge of the 
character values of decorated simple groups and the interplay between Galois action 
and the action of group automorphisms on characters, a subject that it is still 
not fully understood.
Examples of families of simple groups satisfying the inductive
Galois--McKay condition will appear in \cite{Spa19}. 

\medskip

\noindent {\bf Acknowledgements.}~~
The authors are strongly indebted to Gunter Malle for an exhaustive 
revision of a previous version of this paper. This material is partially based upon work supported by the 
National Science Foundation under Grant No. DMS-1440140 while the authors were in residence 
at the Mathematical Sciences Research Institute in Berkeley, California, during the Spring 2018 semester.
  They would like to thank the MSRI and the staff for the kind hospitality. 
The third-named author is obliged to Gus Lonergan for 
useful conversations during the afore-mentioned program.

  \section{$\c H$-triples}\label{Htriples}
  
  Let $G$ be a finite group, let $N\nor G$, and let $\theta \in \irr N$.  
We denote by $\irr{G|\theta}$ the set of  $\chi \in \irr G$
such that $\theta$ is an irreducible constituent of the restriction $\chi_N$.
If $\theta$ is $G$-invariant, then it is said that $(G,N,\theta)$ is a {\bf character triple}.
The aim of this section is to extend the theory of ordering character triples
developed in \cite{NS14} by taking into account the action of
 Galois automorphisms on characters.

\medskip

  Let ${\c G}={\rm Gal}( \Q^{\rm ab}/\Q)$, where
  $\Q^{\rm ab}$ is the  field generated by all roots of unity in $\C$. 
  By Brauer's theorem on splitting fields \cite{Bra45}, the group
  $\c G$ acts on the irreducible characters of every finite group. 
 Let $\sigma \in \c G$, we denote by $\theta^\sigma$ the irreducible character of $N$
given by $\theta^\sigma(n)=\theta(n)^\sigma=\sigma(\theta(n))$ for every $n \in N$. 
(Note that $\c G$ is abelian.)
  
  \smallskip
  
  Let $p$ be a prime
  which is fixed but arbitrary.
  Let $\c H$ be the subgroup of $\c G$
  consisting of the $\sigma \in \c G$ for which there exists an integer $f$  such that
  $\sigma(\xi)=\xi^{p^f}$ for  every root of unity $\xi$ of order not divisible by $p$. 
  For every non-negative integer $n$,
   the restriction of the automorphisms in $\c H$ to $\QQ(\xi_n)$ yields a group 
   $\c H_n\leq {\rm Gal}(\QQ(\xi_n)/\QQ)$ which is 
   isomorphic to 
   ${\rm Gal}(\QQ_p(\xi_n)/\QQ_p)$, where $\xi_n$ is a primitive $n$th root of 
   unity and $\QQ_p$ is the field of $p$-adic numbers.
    
  \smallskip

We denote by $\theta^{\c H}$ the $\c H$-orbit of $\theta $  and by 
$\irr{G|\theta^{\c H}}$ the set of irreducible characters of $G$ which 
lie over some $\c H$-conjugate of $\theta$.  
This set is $$\bigcup_{\sigma \in \c H} \irr{G|\theta^\sigma}\, .$$
If $\chi \in \irr{G|\theta^{\c H}}$, then we call the natural number $\chi(1)/\theta(1)$
the {\bf character degree ratio} of $\chi$ (with respect to $\theta^{\c H}$).

\smallskip

 We denote by $G_{\theta^\c H}$ the stabilizer in $G$ of the set $\theta^{\c H}$,
 in the action of $G$ on $\irr N$ by conjugation. 
 Note that $G_{\theta^{\c H}}=\{ g \in G \ | \ \theta^g=\theta^\sigma \text{ for some }\sigma \in \c H\}$.
  We write $(G,N,\theta)_{\c H}$  if $G_{\theta^{\c H}}=G$;
  in other words, if
   $$ \{ \theta^g \mid g \in G\}  \sbs \{ \theta^\sigma \mid \sigma \in \mathcal H\} .$$
   In this case, we call $(G,N,\theta)_{\c H}$ an {\bf $\c H$-triple}.
   Notice that if $(G,N,\theta)_{\c H}$ is an $\c H$-triple,
  then $(G,N,\theta^\sigma)_{\c H}$ is also an $\c H$-triple for every $\sigma \in \c H$.
  Also, note that $(G_{\theta^\c H},N, \theta)_{\c H}$ is always an $\c H$-triple.

  \smallskip
  
  Suppose that $(G,N,\theta)_{\c H}$
  is an $\c H$-triple. Let $G_\theta$ be the stabilizer of $\theta$ in $G$.
  If $g \in G$, then there is $\sigma \in \c H$
  such that $\theta^g=\theta^\sigma$. Therefore $(G_\theta)^g=G_\theta$,
  and we have that $G_\theta \nor G$.
  Furthermore,
  notice that via
  $gG_\theta \mapsto \sigma\c H_\theta$
  we obtain an injective homomorphism $G/G_\theta \rightarrow \c H/\c H_\theta$.
  We will denote by $\c H_{G, \theta}$ the subgroup of $\c H$ such that 
  $\c H_{G, \theta}/\c H_\theta$ is the image under the above monomorphism. 
 We will write just $\c H_G$ whenever $\theta$ is clear from the context.
 
We start with the following result on projective representations. 
For a background on these,
see Chapter 11 of \cite{Is} or  Section 10.4 of \cite{Nav18}.
 It is a version of the main result of
 \cite{Re65}.
  
      \begin{thm}\label{projectivecyclotomic2}
  Suppose that $\c Q$ is a projective representation of $G$
   whose factor set $\alpha$ only takes roots of unity values and satisfies $\alpha(1,1)=1$.
   Then there is a similar representation $\c Q'$ with 
   entries in a finite cyclotomic extension of $\QQ$.
  \end{thm}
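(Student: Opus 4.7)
The plan is to lift $\c Q$ to an ordinary representation of a suitable finite central extension of $G$ and then invoke Brauer's theorem on splitting fields. Since $G$ is finite, the factor set $\alpha$ takes only finitely many values, all roots of unity by hypothesis; let $m$ be the least common multiple of their orders and set $Z := \mu_m \leq \C^\times$.

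First I would form the cocycle extension $\tilde G := G \times Z$ with multiplication
$$(g_1, z_1)(g_2, z_2) := (g_1 g_2, \alpha(g_1, g_2) z_1 z_2).$$
The factor-set identity for $\alpha$ gives associativity, and together with $\alpha(1,1) = 1$ it forces the normalization $\alpha(g, 1) = \alpha(1, g) = 1$, so $(1,1)$ is the identity of $\tilde G$, the subgroup $\{1\} \times Z$ is central, and $\tilde G /(\{1\} \times Z) \cong G$. A direct calculation then shows that the formula $\tilde{\c Q}(g, z) := z \c Q(g)$ defines an ordinary linear representation of the finite group $\tilde G$.

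Next I would apply Brauer's theorem \cite{Bra45}: every complex representation of a finite group $H$ is similar to one with matrix entries in $\QQ(\xi_e)$, where $e = \exp(H)$. Decomposing $\tilde{\c Q}$ into irreducible constituents, realizing each summand over $\QQ(\xi_e)$ with $e := \exp(\tilde G)$, and combining the individual base changes block-diagonally produces a single invertible matrix $P$ for which $\tilde{\c Q}'(g,z) := P^{-1} \tilde{\c Q}(g,z) P$ is cyclotomic-valued. Restricting to $z = 1$ yields the projective representation $\c Q'(g) := \tilde{\c Q}'(g, 1) = P^{-1} \c Q(g) P$ of $G$, similar to $\c Q$, with the same factor set $\alpha$, and with entries in the finite cyclotomic extension $\QQ(\xi_e)$ of $\QQ$. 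The main point beyond routine bookkeeping is assembling the per-summand Brauer base changes into one global $P$, which is handled by fixing an internal irreducible decomposition of $\tilde{\c Q}$ before applying Brauer irreducible-by-irreducible.
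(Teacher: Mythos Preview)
Your proposal is correct and follows essentially the same approach as the paper: form the finite central extension $\tilde G$ via the factor set $\alpha$, lift $\c Q$ to an ordinary representation $\tilde{\c Q}$ of $\tilde G$, apply Brauer's splitting-field theorem to obtain a similar cyclotomic-valued representation, and restrict to $z=1$. The only cosmetic difference is that the paper invokes Brauer's theorem directly for the (possibly reducible) representation $\tilde{\c Q}$, whereas you spell out the reduction to the irreducible case; both are fine.
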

  
  \begin{proof}
As in Theorem 5.6 of \cite{Nav18}, let $Z$ be a finite subgroup of 
$\C^\times$ containing all values of $\alpha$. Define $\widehat G=\{ (g, z) \ | \ g \in G, \ z \in Z\}$ 
with multiplication given by 
$$(g_1, z_1)(g_2, z_2)=(g_1g_2, \alpha(g_1, g_2)z_1z_2).$$
The product above is associative as $\alpha$ is a factor set. Since 
$\alpha(g,1)=\alpha(1,1)=\alpha(1,g)=1$ for every $g \in G$ 
(by Lemma 11.5 of \cite{Is}), we get that $\alpha(g, g^{-1})=\alpha(g^{-1}, g)$,
and that $\hat G$ is a finite group.  Define $\widehat{\c Q}((g,z))=z\c Q (g)$ 
for every $(g,z)\in \widehat G$. Then $\widehat {\c Q}$ is an ordinary 
representation of $\widehat G$.  
By Brauer's theorem (Theorem 10.3 of \cite{Is}), there exists a 
representation $\widehat{\c D}$ of $\widehat G$ similar to $\widehat{\c Q}$ 
with matrix entries in some finite cyclotomic extension of $\QQ$.
Then
we easily check that
 $\c Q'(g)=\widehat{\c D}(g,1)$ is a projective representation of $G$ similar to $\c Q$ 
 and such the values of $\c Q'$ lie
 in some finite cyclotomic extension of $\Q$. 
  \end{proof}
  
  Given a character triple $(G,N,\theta)$, one can find a projective representation $\c P$ of $G$
   {\bf associated} with $\theta$ in the sense of Definition 5.2 of \cite{Nav18}.

  \begin{cor}\label{projectivecyclotomic}
  If $(G,N,\theta)$ is a character triple,
  then there is
  a projective representation
   $\c P$ of $G$ associated with $\theta$
   with entries in $\QQ^{\rm ab}$ and whose
   factor set only takes roots of unity values.
   If $\c P$ is any such representation, then
   $\c P(g)$ has finite order for every $g \in G$.
\end{cor}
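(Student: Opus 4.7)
The plan is to start from an arbitrary projective representation of $G$ associated with $\theta$, rescale it by a suitable scalar function so that its factor set takes only roots of unity values, and then apply Theorem~\ref{projectivecyclotomic2} to pass to a similar representation with entries in $\QQ^{\rm ab}$.

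Let $\c P_0$ be any projective representation of $G$ associated with $\theta$ (such a representation exists by the standard construction, cf.\ Theorem~5.6 of \cite{Nav18}). Choose a transversal $T$ for $N$ in $G$ with $1\in T$. For each $t\in T\setminus\{1\}$, pick $\mu(t)\in\C^\times$ so that $\mu(t)^{\theta(1)}{\rm det}(\c P_0(t))$ is a root of unity (possible by extracting $\theta(1)$-th roots in $\C$). Set $\mu(1):=1$ and extend $\mu$ to $G$ by $\mu(tn):=\mu(t)$ for $t\in T$, $n\in N$. Define $\c P(g):=\mu(g)\c P_0(g)$. Because $\mu\equiv 1$ on $N$ and is constant on cosets of $N$, $\c P$ agrees with $\c P_0$ on $N$ and still satisfies $\c P(gn)=\c P(g)\c P(n)$ and $\c P(ng)=\c P(n)\c P(g)$ for $n\in N$, $g\in G$; in particular $\c P$ is a projective representation of $G$ associated with $\theta$. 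Taking determinants in the factor set identity for $\c P$ now shows that the new factor set $\alpha$ satisfies that $\alpha(g_1,g_2)^{\theta(1)}$ is a root of unity for all $g_1,g_2\in G$, hence $\alpha$ itself takes only roots of unity values, with $\alpha(1,1)=1$. Theorem~\ref{projectivecyclotomic2} then supplies a representation $\c P'$ similar to $\c P$ with entries in a finite cyclotomic extension of $\Q$, hence in $\QQ^{\rm ab}$. Similarity of projective representations preserves the factor set, and conjugating the representation and its restriction to $N$ by the same matrix preserves the property of being associated with $\theta$, so $\c P'$ is the desired representation.

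For the second statement, let $\c P$ be any projective representation of $G$ associated with $\theta$ whose factor set $\alpha$ takes only roots of unity values, and let $g\in G$ be of order $n$. A routine induction using $\c P(g_1g_2)=\alpha(g_1,g_2)\c P(g_1)\c P(g_2)$ yields $\c P(g)^n=\lambda\c P(g^n)=\lambda\c P(1)=\lambda I$, where $\lambda$ is a product of values of $\alpha$ and hence a root of unity. Choosing $m$ with $\lambda^m=1$ gives $\c P(g)^{nm}=I$, so $\c P(g)$ has finite order.

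The only subtle point is the scaling step: $\c P$ must remain \emph{associated with} $\theta$ in the sense of \cite[Definition~5.2]{Nav18}, not merely a projective representation, so that the output of Theorem~\ref{projectivecyclotomic2} is still associated with $\theta$. Taking $\mu$ to be $1$ on $N$ and constant on cosets of $N$ accomplishes precisely this, while the determinant trick forces the factor set values to be roots of unity.
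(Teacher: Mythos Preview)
Your proof is correct. For the first assertion you follow the same route as the paper: produce a projective representation associated with $\theta$ whose factor set has root-of-unity values, then invoke Theorem~\ref{projectivecyclotomic2}. The only difference is that the paper obtains the root-of-unity factor set by citing Theorem~5.5 of \cite{Nav18}, whereas you carry out the determinant argument explicitly; your argument is precisely the content of that theorem, so nothing is gained or lost.

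For the second assertion your approach differs from the paper's. The paper lifts $\c P$ to an ordinary representation $\widehat{\c P}$ of the finite group $\widehat G$ (as in the proof of Theorem~\ref{projectivecyclotomic2}) and reads off that each $\c P(g)=\widehat{\c P}(g,1)$ has finite order. You instead compute directly that $\c P(g)^n$ is a scalar multiple of $\c P(1)=I$ by a product of values of $\alpha$, hence a root of unity. Your argument is more elementary and avoids the auxiliary group, while the paper's argument has the virtue of reusing the construction already set up. Both are short and standard; either would serve.

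One minor point: your citation of Theorem~5.6 of \cite{Nav18} for the existence of a projective representation associated with $\theta$ is slightly off (that result is the $\widehat G$ construction); the existence statement you want is the standard one underlying Definition~5.2 there.
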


\begin{proof}
By Theorem 5.5 of \cite{Nav18}, there exists a projective
representation $\c P'$ of $G$ associated with $(G,N,\theta)$
such that the factor set $\alpha$ only takes roots of unity values.
Since $\alpha$ is the factor set of $\c P'$, we see that $\alpha(1,1)=1$.
By Theorem \ref{projectivecyclotomic2}, let $\c P$ be
a similar projective representation of $G$ with values 
in $\Q^{ab}$. Since $\c P$ and $\c P'$ have the
same factor set, it easily follows that $\c P$ is a projective
representation associated with $\theta$ satisfying
the required properties.

Finally, if $\c P$ is any such representation, let $Z$ be a finite subgroup of 
$\C^\times$ containing all values of $\alpha$. Define $\widehat G=\{ (g, z) \ | \ g \in G, \ z \in Z\}$ 
with multiplication given by 
$$(g_1, z_1)(g_2, z_2)=(g_1g_2, \alpha(g_1, g_2)z_1z_2)$$
as in the proof of Theorem \ref{projectivecyclotomic2}. 
Then $\widehat {\c P}$ defined as $\widehat {\c P }(g, z)=z \c P (g)$ for every $(g,z )\in \widehat G$ is an ordinary representation
of $\widehat G$. The last statement follows since $\widehat{\c P}(g,z)$ has finite order for every $(g, z )\in \widehat G$.
\end{proof}
    
 \begin{rem}\label{uniquenessmu} Recall that if
  $(G,N, \theta)$ is a character triple, then
  two projective representations $\c P$ and $\c P_1$ of $G$
  are associated with $\theta$ if, and only if, there
  is a function $\mu\colon G \rightarrow \CC^\times$ constant on cosets
  of $N$, with $\mu(1)=1$ and a complex invertible matrix $M$
  such that $\c P_1(g)=\mu(g) M^{-1}\c P(g)M$ for all $g \in G$
  (this is Lemma 10.10(b) of \cite{Nav18}).
  Notice that $\mu$ is uniquely determined by the pair $(\c P, \c P_1)$.
  Indeed, if
  $\c P_1(g)=\mu_1(g) M_1^{-1}\c P(g)M_1$ for all $g \in G$, for some
  function $\mu_1$ with $\mu_1(n)=1$ for all $n \in N$, 
  then we will have that $M^{-1}\c P(n) M=M_1^{-1}\c P(n) M_1$
  for all $n \in N$. By Schur's lemma, 
  we have that $M_1=\lambda M$ for some $\lambda \in \CC^\times$.
  Hence $\mu(g)M^{-1}\c P(g) M=\mu_1(g) M^{-1}\c P(g) M$ for
  all $g \in G$, and thus $\mu(g)=\mu_1(g)$ using that $M^{-1}\c P(g) M$
  is non-zero.
  \end{rem}
  
  Let $\c P$ be a projective representation of $G$ with factor set $\alpha$. 
  If $f \colon G \rightarrow G_1$
  is a group isomorphism (we will use exponential notation for images of $f$), then 
  we define $\c P^f (g_1)=\c P (g_1^{f^{-1}})$
  for $g_1 \in G_1$. This is a projective representation of $G_1$
  with factor set $\alpha^f$, where $\alpha^f(x^f,y^f)=\alpha(x, y)$
  for $x, y \in G$.
  (See Theorem 10.9 of \cite{Nav18}.) If $\sigma \in {\rm Gal}(\Q^{\rm ab}/\Q)$,
  and $\c P$ has entries in $\Q^{\rm ab}$, then
  $\c P^\sigma(g):=\c P(g)^\sigma$ (where $\sigma$ is applied entry-wise) defines a projective
  representation of $G$ with factor set $\alpha^\sigma(x,y)=\alpha(x,y)^\sigma$, for
  $x, y \in G$. Notice that $\alpha(x,y) \in \Q^{\rm ab}$ in this case.
  
  In what follows, we shall use that if $N \nor G$, then $G \times \c G$
  acts naturally on $\irr N$. Indeed, if $\theta \in \irr N$, $g \in G$ and
  $\sigma \in \c G$, then $\theta^{g\sigma}$ is the irreducible
  character of $N$ given by $\theta^{g\sigma}(n)=\theta(gng^{-1})^\sigma$ for $n \in N$.
  (To simplify notation, we often use $g\sigma$ instead of $(g,\sigma)$.)
  
  \smallskip
  
   Throughout this work, we will use $\sim$
  to denote similarity between matrices.

    \begin{lem}\label{mu}
  Suppose that $N \nor G$, $\theta \in \irr N$, and assume that
  $\theta^{g\sigma }=\theta$ for some $g\in G$ and
  $\sigma \in {\rm Gal}(\QQ^{\rm ab}/\QQ)$.
  Let $\c P$ be a projective representation of $G_\theta$
  associated with $\theta$ with values in $\QQ^{\rm ab}$ and factor set $\alpha$.
  Then $$\c P^{g\sigma}(x)=\c P(gxg^{-1})^\sigma,$$ where we apply $\sigma$ 
  entry-wise,  defines
  a projective representation of $G_\theta$ 
  associated with $\theta$, with factor set $\alpha^{g\sigma}(x, y)=\alpha^g(x,y)^\sigma$.
  In particular, there is a unique function $$\mu_{g\sigma} \colon G_\theta \rightarrow \CC^\times$$
  with $\mu_{g\sigma}(1)=1$, constant on cosets of $N$
  such that $\c P^{g\sigma} \sim \mu_{g\sigma} \c P$.
   \end{lem}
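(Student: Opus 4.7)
The plan is to check three things in order: first, that $\c P^{g\sigma}$ really is a projective representation of $G_\theta$; second, that it is associated with $\theta$; and third, that both associated projective representations of $G_\theta$ are related by a $\mu$-function as in Remark \ref{uniquenessmu}, from which existence and uniqueness of $\mu_{g\sigma}$ are immediate.

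For the first step I would note that the hypothesis $\theta^{g\sigma}=\theta$ gives $\theta^{g^{-1}}=\theta^\sigma$, so $G_\theta=G_{\theta^\sigma}=G_{\theta^{g^{-1}}}=(G_\theta)^{g^{-1}}$, that is, $g$ normalizes $G_\theta$. Hence $x\mapsto gxg^{-1}$ is an automorphism of $G_\theta$ (fixing $N$), and the construction $\c P^g(x)=\c P(gxg^{-1})$ is a particular case of the transport-by-isomorphism $\c P^f$ discussed in the paragraph preceding the lemma; it is therefore a projective representation of $G_\theta$ with factor set $\alpha^g(x,y)=\alpha(gxg^{-1},gyg^{-1})$. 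Applying $\sigma$ entry-wise to $\c P^g$ then produces a projective representation with factor set $(\alpha^g)^\sigma$, which is exactly the claimed $\alpha^{g\sigma}$. (The values of $\c P^g$ already lie in $\Q^{\rm ab}$, so applying $\sigma$ makes sense.)

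For the second step I would restrict to $N$. If $n\in N$, then $\c P^{g\sigma}(n)=\c P(gng^{-1})^\sigma$. Since $\c P_N$ is an ordinary representation affording $\theta$, its matrix entries lie in $\Q^{\rm ab}$ and $\sigma$ acts on them; so $(\c P^{g\sigma})_N$ is an ordinary representation (one checks factor-set triviality: $\alpha^{g\sigma}(n_1,n_2)=\alpha(gn_1g^{-1},gn_2g^{-1})^\sigma=1$). Its character sends $n$ to $\theta(gng^{-1})^\sigma=\theta^{g\sigma}(n)=\theta(n)$ by hypothesis, so $(\c P^{g\sigma})_N$ affords $\theta$. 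Thus $\c P^{g\sigma}$ is associated with $\theta$.

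For the last step I would invoke Lemma 10.10(b) of \cite{Nav18} (reproduced in Remark \ref{uniquenessmu}): any two projective representations of $G_\theta$ associated with the same $\theta$ differ by a scalar-valued function constant on cosets of $N$ and a similarity. Applied to $\c P$ and $\c P^{g\sigma}$, this yields the desired $\mu_{g\sigma}\colon G_\theta\to\CC^\times$ with $\mu_{g\sigma}(1)=1$ and $\c P^{g\sigma}\sim \mu_{g\sigma}\c P$; uniqueness is exactly the content of Remark \ref{uniquenessmu}. There is no genuine obstacle in this lemma: the only point demanding care is to record that $g$ normalizes $G_\theta$ (so the formula even defines a representation of $G_\theta$) and that the factor set transforms by the composite action $g\sigma$, which is the bookkeeping step underlying the later definition of $\mu_{g\sigma}$.
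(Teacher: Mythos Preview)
Your proposal is correct and follows exactly the approach of the paper, which simply says ``Notice that $g$ normalizes $G_\theta$. The rest is straightforward using Remark \ref{uniquenessmu}.'' You have merely spelled out the straightforward verifications (that $\c P^{g\sigma}$ is projective with the stated factor set, that its restriction to $N$ affords $\theta$, and then the appeal to Remark \ref{uniquenessmu}) that the paper leaves implicit.
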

   
   \begin{proof}
   Notice that $g$ normalizes $G_\theta$. The rest is straightforward 
   using Remark \ref{uniquenessmu}. \end{proof}

  \medskip
  We are now ready to define a partial order relation between $\c H$-triples. 
   
  \begin{defi}\label{Hiso}
    Suppose that $(G,N,\theta)_{\c H}$ and $(H,M,\varphi)_{\c H}$
  are $\c H$-triples.
  We write
  $(G,N,\theta)_{\c H} \ge_c(H,M,\varphi)_{\c H}$
  if the following conditions hold:
  \begin{enumerate}[(a)]
  \item[(i)]
  $G=NH$, $N\cap H=M$, $\cent {G}N \sbs H$.
  
  \item[(ii)]
  $(H \times \c H)_\theta=(H \times \c H)_\varphi$.
  In particular, $H_\theta=H_\varphi$.
  
  \item[(iii)]
  There are projective representations $\c P$ of $G_\theta$
  and $\c P'$ of $H_\varphi$  associated with $\theta$ and $\varphi$ 
  with entries in $\QQ^{\rm ab}$
  with factor sets $\alpha$ and $\alpha'$ respectively such that 
   $\alpha$ and $\alpha'$ take roots of unity values,
  $\alpha_{H_\theta \times H_\theta}=\alpha'_{H_\theta \times H_\theta}$, 
and
  for $c \in \cent GN$, the scalar matrices $\c P(c)$ and $\c P'(c)$ are 
  associated with the same
  scalar $\zeta_c$.
  
  \item[(iv)] For every $a \in (H\times \c H)_\theta$, the functions $\mu_a$
  and $\mu'_a$  given by Lemma \ref{mu} agree on $H_\theta$.

  \end{enumerate}
  \end{defi}
  
  In (iii), notice that if $c \in \cent GN$, then $c \in H_\theta$,
  and $\c P(c)$ and $\c P'(c)$ are scalar matrices by Schur's Lemma
  (applied to the irreducible representations $\c P_N$ and $\c P'_M$).
  
  In the situation described above we 
  say that $(\c P, \c P')$  {\bf gives} 
  $$(G,N,\theta)_{\c H} \ge_c(H,M,\varphi)_{\c H}\, .$$
  
  \medskip

 Note that if $(\c P, \c P')$ gives
 $(G,N,\theta)_{\c H} \ge_c(H,M,\varphi)_{\c H}$ as above, then $(\c P, \c P')$ is associated 
 with $(G_\theta, N,\theta) \ge_c(H_\varphi,M,\varphi)$ in the sense of 
 Definition 10.14 of \cite{Nav18}.  
 
 \medskip
 
 The following technical result will be useful at the end of Section \ref{newfromold}.

\begin{lem}\label{replace}
Assume that $(\c P, \c P)$ gives $(G, N, \theta)_{\c H}\geq_c (H, M, N)_{\c H}$. 
Let $U\leq \C^\times$ be the subgroup of roots of unity of $\C$.
If $\epsilon \colon G_\theta \rightarrow U$ is any map constant on $N$-cosets
and such that $\epsilon (1)=1$, then
$(\epsilon \c P, \epsilon_{H_\theta} \c P')$ also gives 
$(G, N, \theta)_{\c H}\geq_c (H, M, \varphi)_{\c H}$.
\end{lem}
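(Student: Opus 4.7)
The plan is to verify each of the four conditions of Definition \ref{Hiso} for the pair $(\epsilon\c P, \epsilon_{H_\theta}\c P')$. Conditions (i) and (ii) involve only the $\c H$-triples $(G,N,\theta)_{\c H}$ and $(H,M,\varphi)_{\c H}$ and so are inherited unchanged from the hypothesis. The substance of the argument lies in verifying (iii) and (iv), where I have to track how multiplication by a scalar function affects the factor set and the $\mu$-functions of Lemma \ref{mu}.

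For (iii), I would first note that since $\epsilon$ is constant on $N$-cosets and $\epsilon(1)=1$, its restriction to $N$ is identically $1$. Therefore $(\epsilon\c P)_N=\c P_N$ affords $\theta$ and $(\epsilon_{H_\theta}\c P')_M=\c P'_M$ affords $\varphi$, so both scalar modifications remain projective representations associated with $\theta$ and $\varphi$; their matrix entries still lie in $\Q^{\rm ab}$ because $U\sbs \Q^{\rm ab}$. A direct calculation shows that $\epsilon\c P$ has factor set $\beta(x,y)=\alpha(x,y)\epsilon(xy)\epsilon(x)^{-1}\epsilon(y)^{-1}$, and similarly $\epsilon_{H_\theta}\c P'$ has factor set $\beta'(x,y)=\alpha'(x,y)\epsilon(xy)\epsilon(x)^{-1}\epsilon(y)^{-1}$. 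Both take roots of unity values, and since $\alpha$ and $\alpha'$ agree on $H_\theta\times H_\theta$ and the correction factors coincide there, so do $\beta$ and $\beta'$. Finally, for $c\in\cent GN\sbs H_\theta$, we have $(\epsilon\c P)(c)=\epsilon(c)\zeta_c I=(\epsilon_{H_\theta}\c P')(c)$, so the new central scalars still agree.

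For (iv), fix $a=g\sigma\in(H\times\c H)_\theta$. A direct substitution yields
\[
(\epsilon\c P)^{g\sigma}(x) \;=\; \epsilon(gxg^{-1})^\sigma\,\c P^{g\sigma}(x) \;\sim\; \epsilon(gxg^{-1})^\sigma\mu_a(x)\,\c P(x),
\]
and rearranging to compare with $\epsilon\c P$, the uniqueness clause of Remark \ref{uniquenessmu} identifies the new $\mu$-function at $a$ as $\tilde\mu_a(x)=\mu_a(x)\,\epsilon(gxg^{-1})^\sigma/\epsilon(x)$; the conditions $\tilde\mu_a(1)=1$ and constancy on $N$-cosets follow from $N\nor G$ and the properties of $\epsilon$. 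The same formula, with $\mu_a$ replaced by $\mu'_a$ and the domain restricted to $H_\theta$, gives the new $\tilde\mu'_a$. Since $\mu_a=\mu'_a$ on $H_\theta$ by hypothesis and the correction factor depends only on $\epsilon$, we conclude $\tilde\mu_a=\tilde\mu'_a$ on $H_\theta$. The only real obstacle is the bookkeeping check that $gxg^{-1}\in H_\theta$ for $x\in H_\theta$, so that $\epsilon_{H_\theta}$ is evaluated inside its domain; this follows because $\theta^{g\sigma}=\theta$ forces $g\in H_{\theta^{\c H}}$, and $H_\theta$ is normal in $H_{\theta^{\c H}}$ by the observation preceding Theorem \ref{projectivecyclotomic2}.
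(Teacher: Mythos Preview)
Your proof is correct and follows essentially the same approach as the paper's: verify (i)--(iv) directly, computing the modified factor set as $\alpha$ times a coboundary in $\epsilon$ and the modified $\mu$-function as $\mu_a\cdot\epsilon^{a}/\epsilon$. One harmless slip: your factor set formula $\beta(x,y)=\alpha(x,y)\epsilon(xy)\epsilon(x)^{-1}\epsilon(y)^{-1}$ has the coboundary inverted (the correct expression is $\alpha(x,y)\epsilon(x)\epsilon(y)\epsilon(xy)^{-1}$), but since you only use that the correction factor is the same for $\c P$ and $\c P'$, the argument is unaffected.
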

\begin{proof} Conditions (i) and (ii) of Definition \ref{Hiso} are satisfied. 
Write $\hat{\c P}=\epsilon \c P$ and 
$\hat{\c P'}=\epsilon_{H_\theta} \c P'$, we will show that $(\hat{\c P}, \hat{\c P'})$ gives $(G, N, \theta)_{\c H}\geq_c(H, M, \varphi)_{\c H}$.  

Note that
$\hat{\c P}$ is a projective representation with values in $\Q^{ab}$ 
associated with $\theta$.
If $\nu \colon G_\theta \rightarrow U$ is any arbitrary function, we can define
$\delta(\nu)\colon G_\theta \times G_\theta \rightarrow U$ by
$$\delta(\nu)(x, y)=\nu(x)\nu(y)\nu(xy)^{-1}\, $$
so that $\delta(\nu)$ is a factor set. It is routine to check that the factor set of $\widehat{\c P}$ is
$\beta=\delta(\epsilon)\alpha$.
Also, $\hat{\c P'}$ is a projective representation with values 
in $\Q^{ab}$ associated with $\varphi$, and with
factor set $\beta'=\delta(\epsilon_{H_\theta})\alpha'=\beta_{H_\theta \times H_\theta}$. 
For every $c \in \cent G N$, the matrices $\hat{\c P'}(c)$ and $\hat{\c P'}(c)$
correspond to the same scalar $\epsilon(c)\zeta_c$, where $\c P (c)$ and $\c P'(c)$
correspond to the scalar $\zeta_c$. Hence $(\hat{\c P}, \hat{\c P'})$ 
satisfies condition (iii) of Definition \ref{Hiso}.
Whenever $(h, \sigma)\in (H\times \c H)_\theta$, it is straightforward to check that, 

$$\hat{\c P}^{h\sigma}\sim \hat \mu_{h\sigma}\hat{\c P} \text{ \   \ and \  \ }(\hat{\c P'})^{h\sigma}\sim \hat \mu'_{h\sigma}\hat{\c P'} \, ,$$

where $\hat \mu_{h\sigma}=\mu_{h\sigma}\frac{\epsilon^{h\sigma}}{\epsilon}$,
$\hat \mu'_{h\sigma}=\mu'_{h\sigma}(\frac{\epsilon^{h\sigma}}{\epsilon})_{H_\theta}$,
and the functions $\mu_{h\sigma}$ and $\mu'_{h\sigma}$ given by Lemma \ref{mu} satisfy

 $$\c P ^{h\sigma} \sim \mu_{h\sigma}\c P \text{ \ \  and \ \ } (\c P ')^{h\sigma}\sim \mu'_{h\sigma} \c P'\, .$$
 
 In particular $\hat\mu'_{h \sigma}=(\hat \mu_{h\sigma})_{H_\theta}$, 
 thus the pair $(\hat{\c P}, \hat{\c P'})$ satisfies
 all the conditions in Definition \ref{Hiso}.
\end{proof}

The following technical lemma allows us to show that in order to check (iv) of Definition \ref{Hiso} on $(H\times \c H)_\theta$ it is enough to
check the condition on a 
transversal of $H_\theta$ in $(H\times \c H)_\theta$.

\begin{lem}\label{conjugationstabilizer} Suppose that $(G,N,\theta)_{\c H}$
is an $\c H$-triple. Let $\c P$ be a projective
representation of $G_\theta$ 
associated with $\theta$ with entries in $\Q^{\rm ab}$, with factor set $\alpha$. Then the following hold:
\begin{enumerate}
\item[{\rm (a)}] Let $g \in G_\theta$. Then $\c P ^g(y)=\mu_g(y)M \c P (y) M^{-1}$ for all $y \in G_\theta$, where 
$$\mu_g(y)=\frac{\alpha(g,g^{-1})}{\alpha(g, yg^{-1})\alpha(y, g^{-1})} $$
and $M=\c P(g)$.  In particular, $\mu_g$ has values in $\Q^{\rm ab}\setminus\{0\}$.
\item[{\rm (b)}] Let $(g, \sigma) \in (G\times \c H)_{\theta}$ and suppose that we write
 $g=tx$ where $t \in G_\theta$ and $x \in G$.  Then $\theta^{x \sigma}=\theta$,
 $(G_\theta)^x=G_\theta$,  and $$\mu_{g \sigma}=\mu_t^{x \sigma}\mu_{x\sigma} \, ,$$ 
 where $\mu_t^{x \sigma}(y)=\mu_t(xyx^{-1})^\sigma$, for every $y \in G_\theta$, and the functions $\mu_t$, $\mu_{x\sigma}$
 and $\mu_{g\sigma}$
 are given by Lemma \ref{mu}.
\end{enumerate}
\end{lem}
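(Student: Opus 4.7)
The plan for part (a) is a direct calculation. Unwinding the definition in Lemma \ref{mu} with $\sigma = 1$ gives $\c P^g(y) = \c P(gyg^{-1})$. I would apply the factor set identity $\c P(xy) = \alpha(x,y)^{-1}\c P(x)\c P(y)$ twice (first to the decomposition $g\cdot(yg^{-1})$, then to $y\cdot g^{-1}$), and substitute $\c P(g^{-1}) = \alpha(g,g^{-1})\c P(g)^{-1}$. This last identity is legitimate because the factor set $\alpha$ of $\c P$ satisfies $\alpha(1,1) = 1$ by Corollary \ref{projectivecyclotomic}, forcing $\c P(1) = I$. Collecting the three scalar coefficients yields
\[
\c P(gyg^{-1}) = \frac{\alpha(g,g^{-1})}{\alpha(g,yg^{-1})\alpha(y,g^{-1})}\cdot M\c P(y)M^{-1}
\]
with $M = \c P(g)$, which matches the claimed $\mu_g(y)$. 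That $\mu_g$ is uniquely determined, constant on $N$-cosets, and satisfies $\mu_g(1)=1$ is inherited from Remark \ref{uniquenessmu}, and the values lie in $\Q^{\rm ab}\setminus\{0\}$ because $\alpha$ is root-of-unity valued.

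The plan for part (b) rests on a compatibility observation: the assignment $(h,\tau)\mapsto \c P^{h\tau}$ is a right action in the sense that $(\c P^{h\tau})^{g\sigma} = \c P^{(hg)(\tau\sigma)}$, which is immediate from the definition $\c P^{h\tau}(y) = \c P(hyh^{-1})^\tau$. Factoring $(g,\sigma) = (t,1)(x,\sigma)$ then yields $\c P^{g\sigma} = (\c P^t)^{x\sigma}$. The auxiliary claims are quick: $\theta^{x\sigma} = \theta$ follows by applying $t^{-1}$ (with $t \in G_\theta$) to $\theta^{g\sigma} = \theta$, and $(G_\theta)^x = G_\theta$ because the $G$-action on $\irr N$ commutes with the $\c G$-action, so $G_{\theta^\tau} = G_\theta$ for every $\tau \in \c G$; hence $(G_\theta)^x = G_{\theta^x} = G_{\theta^{\sigma^{-1}}} = G_\theta$.

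To extract the functional equation I would substitute $\c P^t(y) = \mu_t(y)\c P(t)\c P(y)\c P(t)^{-1}$ into $(\c P^t)^{x\sigma}(y) = \c P^t(xyx^{-1})^\sigma$ and, using that entry-wise $\sigma$ respects matrix multiplication and scalar multiplication, obtain
\[
\c P^{g\sigma}(y) = \mu_t(xyx^{-1})^\sigma\, \c P(t)^\sigma\, \c P(xyx^{-1})^\sigma\, (\c P(t)^\sigma)^{-1}.
\]
Recognizing $\c P(xyx^{-1})^\sigma = \c P^{x\sigma}(y) = \mu_{x\sigma}(y)\,N\c P(y)N^{-1}$ for some $N$, the scalar coefficient that emerges is exactly $\mu_t^{x\sigma}(y)\mu_{x\sigma}(y)$ and the matrix part is conjugation by $\c P(t)^\sigma N$. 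Comparing with $\c P^{g\sigma} \sim \mu_{g\sigma}\c P$ and invoking the uniqueness from Lemma \ref{mu} (equivalently Remark \ref{uniquenessmu}) forces $\mu_{g\sigma} = \mu_t^{x\sigma}\mu_{x\sigma}$.

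The calculation is essentially forced once the compatibility $(\c P^a)^b = \c P^{ab}$ is in hand; the only real obstacle is organizational bookkeeping, namely tracking how similarity transformations interact with both conjugation by $x$ and entry-wise application of $\sigma$, and checking that the products $\mu_t^{x\sigma}\mu_{x\sigma}$ remain well-defined on $G_\theta$ and constant on $N$-cosets—both of which follow from $(G_\theta)^x = G_\theta$ and the corresponding properties of the individual factors.
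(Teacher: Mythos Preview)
Your proposal is correct and follows essentially the same approach as the paper's proof: part (a) is the direct factor-set calculation the paper alludes to, and part (b) is the paper's chain $\c P^{g\sigma}(y)=(\c P^t(xyx^{-1}))^\sigma\sim\mu_t^{x\sigma}(y)\c P^{x\sigma}(y)\sim\mu_t^{x\sigma}(y)\mu_{x\sigma}(y)\c P(y)$, which you have simply repackaged via the compatibility $(\c P^a)^b=\c P^{ab}$. Your added justifications for $\theta^{x\sigma}=\theta$ and $(G_\theta)^x=G_\theta$ are correct elaborations of what the paper leaves implicit.
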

\begin{proof} Part (a) follows directly
 from the definitions of   $\c P^g$, of a projective
 representation, the uniqueness in Remark \ref{uniquenessmu}
 and Lemma \ref{mu} applied to $\sigma=1$. Given $(g, \sigma) \in (G\times \c H)_{\theta}$, suppose
 that we write $g=tx$ for some $t \in G_\theta$ and $x \in G$. Note that $\theta^{x \sigma}=\theta$. 
 In particular, $x$ normalizes $G_\theta$. Then for every $y \in G_\theta$ we have that
\begin{align*}
\c P ^{g\sigma}(y)& = \c P (gyg^{-1})^\sigma\\[1.5mm]
& = \c P (tx y x^{-1}t^{-1})^\sigma\\[1.5mm]
&=(\c P ^t(x y x^{-1}))^\sigma\\[1.5mm]
& \sim \mu_{t }(x y x^{-1})^\sigma \c P(x y x^{-1})^\sigma\\[1.5mm]
& =  \mu_{t }^{x \sigma}( y) \c P^{x \sigma}( y ) \\[1.5mm]
& \sim \mu_{t }^{x \sigma}( y)\mu_{x\sigma }(y)  \c P (y).  \qedhere
\end{align*}
\end{proof}

We will often use the following fact in Section \ref{newfromold}.

\begin{lem}\label{cor_trans}
 Suppose that $(G,N,\theta)_{\c H}$ and $(H,M,\varphi)_{\c H}$
are $\c H$-triples satisfying the  conditions (i) and (ii) in Definition \ref{Hiso}. 
Write $A= (H\times \c H)_\theta$.
Suppose that $\c P$ and $\c P'$ are projective representations satisfying  (iii) 
from Definition \ref{Hiso}. Then condition (iv) of  Definition \ref{Hiso} holds for every $a\in A$  
if, and only if, it holds for a complete set of representatives of $H_\theta$-cosets in $A$. \end{lem}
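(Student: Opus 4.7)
The plan is to exploit the multiplicative behavior of $\mu_{g\sigma}$ and $\mu'_{g\sigma}$ under products in the stabilizer $A=(H\times\c H)_\theta$, as recorded in Lemma~\ref{conjugationstabilizer}(b), combined with the identification of the factor sets $\alpha$ and $\alpha'$ on $H_\theta\times H_\theta$ built into condition~(iii) of Definition~\ref{Hiso}.

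To set up, observe that $H_\theta$ sits inside $A$ via $t\mapsto(t,1)$, so a transversal $T$ of the $H_\theta$-cosets in $A$ makes sense. The ``only if'' direction is trivial; for the converse, I would assume that condition (iv) of Definition~\ref{Hiso} holds for each $a_0\in T$ and take an arbitrary $a=(g,\sigma)\in A$. Write $a=(t,1)\cdot a_0$ with $t\in H_\theta$ and $a_0=(x,\sigma)\in T$, so that $g=tx$ and $(x,\sigma)\in A$. Applying Lemma~\ref{conjugationstabilizer}(b) both to $(G,N,\theta)_{\c H}$ with projective representation $\c P$ (using $t\in H_\theta\subseteq G_\theta$) and to $(H,M,\varphi)_{\c H}$ with $\c P'$ (using $H_\varphi=H_\theta$ from condition (ii)), one obtains
\begin{equation*}
\mu_{g\sigma}=\mu_t^{x\sigma}\,\mu_{x\sigma}\quad\text{on }G_\theta,\qquad \mu'_{g\sigma}=(\mu'_t)^{x\sigma}\,\mu'_{x\sigma}\quad\text{on }H_\theta,
\end{equation*}
together with the fact that $x$ normalizes both $G_\theta$ and $H_\theta$.

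The key remaining point is to verify $\mu_t\big|_{H_\theta}=\mu'_t$ for every $t\in H_\theta$. By Lemma~\ref{conjugationstabilizer}(a), the value $\mu_t(y)$ is a ratio of values of $\alpha$ evaluated on elements of $H_\theta\times H_\theta$ whenever $y\in H_\theta$, and $\mu'_t(y)$ is the same ratio in $\alpha'$; since $\alpha$ and $\alpha'$ agree on $H_\theta\times H_\theta$ by~(iii), the two values coincide. Because $x$ normalizes $H_\theta$, this pointwise equality transports through the $(x,\sigma)$-twist, yielding $\mu_t^{x\sigma}\big|_{H_\theta}=(\mu'_t)^{x\sigma}$. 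Restricting the first display to $H_\theta$ and combining with the assumption $\mu_{x\sigma}\big|_{H_\theta}=\mu'_{x\sigma}$ for $a_0\in T$ then gives $\mu_{g\sigma}\big|_{H_\theta}=\mu'_{g\sigma}$, which is (iv) for $a$. The only genuinely delicate point in the argument is the identification $\mu_t|_{H_\theta}=\mu'_t$, which relies crucially on (iii); once this is secured, the rest is bookkeeping via Lemma~\ref{conjugationstabilizer}(b).
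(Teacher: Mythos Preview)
Your argument is correct and essentially identical to the paper's: both decompose $a\in A$ as an element of $H_\theta$ times a transversal representative, invoke Lemma~\ref{conjugationstabilizer}(b) for the multiplicative splitting of $\mu$ and $\mu'$, and then use the explicit formula of Lemma~\ref{conjugationstabilizer}(a) together with $\alpha|_{H_\theta\times H_\theta}=\alpha'$ from~(iii) to identify $\mu_t|_{H_\theta}=\mu'_t$ for $t\in H_\theta$.
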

\begin{proof}
Note that $H_\theta \nor A$ since $\theta^{h\sigma}=\theta$ implies
that $h$ normalizes $H_\theta$. The direct implication trivially holds. Assume that (iv) of 
Definition \ref{Hiso} holds for a complete set of representatives $\mathbb T$ of 
the $H_\theta$-cosets in $A$. Given $a \in A$, write $a=hx\sigma$ for $h \in H_\theta$ 
and $x\sigma \in \mathbb T$. By Lemma \ref{conjugationstabilizer}(b) we have that
$$\mu_a=\mu_h^{x\sigma}\mu_{x \sigma} \text{ \ and \ } \mu'_a=(\mu'_h)^{x\sigma}\mu'_{x \sigma}.$$
By assumption $\mu_{x \sigma}'$ is the restriction of $\mu_{x \sigma}$. 
By Lemma \ref{conjugationstabilizer}(a) we have that $\mu_h$ depends only 
on the factor set $\alpha$ of $\c P$ and $\mu'_h$ depends only on the 
factor set $\alpha'$ of $\c P'$. Since $\c P$ and $\c P'$ satisfy condition (iii) 
of Definition \ref{Hiso}, we have that $\alpha'$ is the restriction of $\alpha$. 
Hence $\mu'_a$ is the restriction of $\mu_a$ as wanted. 
\end{proof}

Let $(\c P, \c P')$ be associated with $(G,N,\theta)_{\c H} \ge_c(H,M,\varphi)_{\c H}$. 
Since  $(\c P, \c P')$ is associated with $(G_\theta,N,\theta) \ge_c(H_\varphi,M,\varphi)$ 
as in Definition 10.14 of \cite{Nav18}, we have defined character bijections  via $(\c P, \c P')$
$$\tau_J \colon \irr{J|\theta} \rightarrow \irr{J\cap H|\varphi}$$
whenever $N\sbs J\leq G_\theta$ as in Theorem 10.13 of \cite{Nav18}.
These bijections preserve ratios of character degrees.

   \begin{lem}\label{sigma_equiv}
   Suppose that $(\c P, \c P')$ is associated with $(G,N,\theta)_{\c H} \ge_c(H,M,\varphi)_{\c H}$.
   
   \begin{enumerate}[{\rm (a)}]
   \item
   For every $N \sbs J \leq G_\theta$, let 
    $\tau_J\colon \irr{J|\theta} \rightarrow \irr{J\cap H|\varphi}$
     be the bijective map defined via $(\c P, \c P')$. 
    If
     $(h, \sigma) \in (H \times \c H)_\theta$, then $J^h \sbs G_\theta$ and 
     $$\tau_{J^h}(\chi^{h\sigma})=\tau_J(\chi)^{h\sigma}$$
     for every $\chi \in \irr{J|\theta}$.
     
    \item
     Let $\c H_G /\c H_\theta\leq \c H/\c H_\theta$ be the image of
     $G/G_\theta$ in $\c H/\c H_\theta$ under the natural monomorphism.
    If for $\chi \in \irr{G|\theta}$,   we define $\tau(\chi)=(\tau_{G_\theta}(\psi))^H$, 
    where $\psi \in \irr{G_\theta}$ is the Clifford correspondent of $\chi$ lying over $\theta$, 
    then the map $\tau\colon \irr{G|\theta}\rightarrow \irr{H|\varphi}$ is an 
    $\c H_G$-equivariant bijection  preserving ratios of character degrees.
     \end{enumerate}
\end{lem}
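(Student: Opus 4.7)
The plan is to reduce everything to a computation with the defining projective representations $\c P$ and $\c P'$, using Lemma~\ref{mu} to track how they transform under $(h,\sigma)$, and invoking condition~(iv) of Definition~\ref{Hiso} at the crucial moment.

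For part~(a), I recall from the construction of $\tau_J$ (\cite[Theorem~10.13]{Nav18}) that each $\chi\in\irr{J|\theta}$ is afforded by a representation of the form $\c Y\otimes\c P|_J$, where $\c Y$ is a projective representation of $J/N$ (inflated to $J$) with factor set the restriction of $\alpha^{-1}$, and that $\tau_J(\chi)$ is afforded by $\c Y|_{J\cap H}\otimes\c P'|_{J\cap H}$. Since $G_\theta\nor G$, we have $N\leq J^h\leq G_\theta$ and $\chi^{h\sigma}\in\irr{J^h|\theta}$, so $\tau_{J^h}(\chi^{h\sigma})$ is defined. The representation $\c Y^{h\sigma}\otimes(\c P|_J)^{h\sigma}$ affords $\chi^{h\sigma}$, and by Lemma~\ref{mu}, together with the fact that $\mu_{h\sigma}$ is constant on $N$-cosets, $\chi^{h\sigma}$ is also afforded by $(\mu_{h\sigma}|_{J^h}\c Y^{h\sigma})\otimes\c P|_{J^h}$. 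Hence $\tau_{J^h}(\chi^{h\sigma})$ is afforded by $\mu_{h\sigma}|_{J^h\cap H}\c Y^{h\sigma}|_{J^h\cap H}\otimes\c P'|_{J^h\cap H}$. A parallel computation, applying Lemma~\ref{mu} to $\c P'$ (for which condition~(ii) of Definition~\ref{Hiso} ensures $\varphi^{h\sigma}=\varphi$), shows that $\tau_J(\chi)^{h\sigma}$ is afforded by $\c Y^{h\sigma}|_{J^h\cap H}\otimes\mu'_{h\sigma}|_{J^h\cap H}\c P'|_{J^h\cap H}$. These representations coincide because $J^h\cap H=(J\cap H)^h\leq H_\theta$ and condition~(iv) of Definition~\ref{Hiso} forces $\mu_{h\sigma}=\mu'_{h\sigma}$ on $H_\theta$.

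For part~(b), I first show that every $\sigma\in\c H_G$ admits $h\in H$ with $\theta^h=\theta^\sigma$, equivalently $(h^{-1},\sigma)\in(H\times\c H)_\theta$: taking $g\in G$ with $\theta^g=\theta^\sigma$ and writing $g=nh$ with $n\in N$ and $h\in H$ (using $G=NH$), the identity $\theta^n=\theta$ gives $\theta^h=\theta^\sigma$. Next, using that $G_\theta\nor G$, that induction from a normal subgroup is insensitive to $G$-conjugation, and that induction commutes with Galois action, I would verify that $\psi^{h^{-1}\sigma}$ is the Clifford correspondent of $\chi^\sigma$ above $\theta$: it lies over $\theta^{h^{-1}\sigma}=\theta$ in $G_\theta$, and $(\psi^{h^{-1}\sigma})^G=(\psi^G)^\sigma=\chi^\sigma$. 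Applying part~(a) with $J=G_\theta$ yields $\tau_{G_\theta}(\psi^{h^{-1}\sigma})=\tau_{G_\theta}(\psi)^{h^{-1}\sigma}$. Inducing to $H$, and using that $H_\varphi=H_\theta\nor H$ with $h^{-1}\in H$ (so $H$-induction absorbs $h^{-1}$-conjugation) together with the commutativity of induction and Galois action, I obtain $\tau(\chi^\sigma)=(\tau_{G_\theta}(\psi)^H)^\sigma=\tau(\chi)^\sigma$. Preservation of degree ratios then follows from the analogous property of $\tau_{G_\theta}$ stated before the lemma, combined with $[G:G_\theta]=[H:H_\theta]$; this last equality holds since $G=NH$ and $G_\theta=NH_\theta$, the latter because $g=nh\in G_\theta$ forces $h\in H_\theta$.

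The principal obstacle is the delicate bookkeeping of projective representations and their correction factors $\mu_{h\sigma}$, $\mu'_{h\sigma}$ under the $(h,\sigma)$-action on both sides; that they must agree on the intersection with $H$ is precisely the content of condition~(iv) of Definition~\ref{Hiso}, so once this axiom is invoked the remaining manipulations become formal.
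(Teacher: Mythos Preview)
Your proof is correct and follows essentially the same approach as the paper: both parts rely on the tensor decomposition $\chi \leftrightarrow \c Q\otimes\c P|_J$, the transformation rule from Lemma~\ref{mu}, and condition~(iv) of Definition~\ref{Hiso} to match $\mu_{h\sigma}$ and $\mu'_{h\sigma}$ on $H_\theta$, then Clifford theory and induction for part~(b). Your treatment of the degree ratio via $[G:G_\theta]=[H:H_\theta]$ is slightly more explicit than the paper's, but the argument is the same.
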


\begin{proof}
We have that $(\c P, \c P')$
 is associated with $(G_\theta, N, \theta)\geq_c(H_\varphi, M, \varphi)$ in 
 the sense of Definition 10.14 of \cite{Nav18}. For every $N\sbs J \leq G_\theta$, 
 we have defined character bijections
  $$\tau_J\colon \irr{J|\theta}\rightarrow \irr{J\cap H |\varphi}.$$  
  Given $\chi \in \irr{J|\theta}$, recall that $\chi$ is the trace of a 
  representation of the form $\c Q \otimes
  \c P_J$, where $\c Q$ is an irreducible projective representation of 
  $J/N$, with factor set $\beta=(\alpha^{-1})_{J\times J}$, that can be 
  chosen with matrix entries in some finite cyclotomic extension of $\Q$ 
  (by Theorem \ref{projectivecyclotomic2}). Then $\tau_J(\chi)$ is the character afforded by 
  $ \c Q_{J\cap H} \otimes \c P'_{J\cap H}$. 

\smallskip

Let $a=(h, \sigma)\in A=(H\times \c H)_\theta$. Then $\theta^a=\theta$ 
and also $\varphi^a =\varphi$, as $A=(H\times \c H)_\varphi.$ 
Hence $\chi^a \in \irr{J^h|\theta}$. The character $\chi^a$ 
of $J^h$
is afforded by 
$$(\c Q \otimes \c P_J)^a=\c Q^a \otimes
(\c P_J)^a=
\c Q^a \otimes
(\c P^a)_{J^h} \sim  \c Q ^a \otimes (\mu_ a)_{J^h}\c P_{J^h}=(\mu_a)_{J^h}\c Q ^a  \otimes \c P _{J^h}\, .$$ 
This implies that $(\mu_a)_{J^h}\c Q ^a$ is a projective representation of $J^h/N$
with factor set $(\alpha^{-1})_{J^h \times J^h}$.
By definition, we have that 
$\tau_J(\chi^a)$ is afforded by $$(\mu_ a \c Q^a)_{J^h \cap H} \otimes
\c P '_{J^h \cap H} =(\c Q ^a)_{J^h \cap H}  \otimes
(\mu'_a\c P')_{J^h \cap H} \sim (\c Q_{J\cap H})^a \otimes (\c P'_{J\cap H})^a  \, .$$ 
Just notice that 
$(\c Q_{J\cap H})^a \otimes (\c P'_{J\cap H})^a=(\c Q_{J\cap H} \otimes \c P'_{J\cap H} )^a$ affords $\tau_J(\chi)^a$.

\smallskip

We next prove the second statement. If $\sigma \in \c H_G$, then let 
$g \in H$ be such that $\theta^{g\sigma}=\theta$ (there exists such 
$g \in H$ by the definition of $\c H_G$, and using the fact
that $G=G_\theta H$). Since $(g, \sigma)\in (H\times \c H)_\theta=(H\times \c H)_\varphi$, we have 
$\varphi^{g\sigma}=\varphi$. Consequently $\chi^\sigma \in \irr{G|\theta}$ and 
$\tau(\chi)^\sigma\in \irr{H|\varphi}$.  Since $\psi^{g\sigma}$ is the 
Clifford correspondent of $\chi^\sigma$, we have that 
$$\tau(\chi^\sigma)=\tau_{G_\theta}(\psi^{g\sigma})^H=(\tau_{G_\theta}(\psi)^{g\sigma})^H
 = (\tau_{G_\theta}(\psi)^H)^\sigma=\tau(\chi)^\sigma\, ,$$
where $\tau_{G_\theta}(\psi^{g\sigma})=\tau_{G_\theta}(\psi)^{g\sigma}$ 
by the first part of this proof, so $\tau$ is $\c H_G$-equivariant.
Notice that our map
preserves ratios of character degrees because character triple isomorphisms do,
and it is a bijection since $\tau_{G_\theta}$ and the Clifford correspondence
are bijections. 
\end{proof}
  
  Ordered pairs of $\c H$-triples yield $\c H$-equivariant
  bijections between 
   related character sets. 
  
  \begin{thm}\label{Hequivariant}
  Suppose that $(G,N,\theta)_{\c H} \ge_c(H,M,\varphi)_{\c H}$.
  Then there is an $\c H$-equivariant bijection
  $\irr{G|\theta^{\c H}} \rightarrow \irr{H|\varphi^{\c H}}$ that preserves ratios of character degrees.
   \end{thm}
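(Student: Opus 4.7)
The plan is to bootstrap the bijection given by Lemma~\ref{sigma_equiv}(b)---which is only $\c H_G$-equivariant and only defined on $\irr{G|\theta}$---to a bijection on the larger set $\irr{G|\theta^{\c H}}$ that is equivariant under all of $\c H$.

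First, I would fix a pair $(\c P, \c P')$ giving $(G,N,\theta)_{\c H}\geq_c(H,M,\varphi)_{\c H}$, and let $\tau_0\colon \irr{G|\theta}\rightarrow \irr{H|\varphi}$ be the bijection furnished by Lemma~\ref{sigma_equiv}(b). For every $\sigma\in\c H$, I would check that $(\c P^\sigma,(\c P')^\sigma)$ gives $(G,N,\theta^\sigma)_{\c H}\geq_c(H,M,\varphi^\sigma)_{\c H}$. Conditions (i)--(iii) of Definition~\ref{Hiso} transfer directly after noting that the actions of $G$ and $\c G$ on $\irr N$ commute (since $\c G$ is abelian), so $G_{\theta^\sigma}=G_\theta$, $\c H_{\theta^\sigma}=\c H_\theta$, $(H\times\c H)_{\theta^\sigma}=(H\times\c H)_\theta$, and consequently $\c H_{G,\theta^\sigma}=\c H_{G,\theta}$; condition (iv) is handled by the uniqueness in Lemma~\ref{mu}, which forces the $\mu$-functions attached to $(\c P^\sigma, a)$ to be the $\sigma$-translates of those attached to $(\c P, a)$, and similarly for $\c P'$.

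Unwinding the formula in the proof of Lemma~\ref{sigma_equiv}(b) (characters as traces of $\c Q\otimes \c P$-type representations restricted to $H_\varphi$), the bijection $\tau_\sigma\colon \irr{G|\theta^\sigma}\rightarrow \irr{H|\varphi^\sigma}$ arising from $(\c P^\sigma,(\c P')^\sigma)$ satisfies the compatibility $\tau_\sigma(\chi)=\tau_0(\chi^{\sigma^{-1}})^\sigma$ for every $\chi\in\irr{G|\theta^\sigma}$. This motivates defining
$$\tau\colon \irr{G|\theta^{\c H}}\longrightarrow \irr{H|\varphi^{\c H}},\qquad \tau(\chi)=\tau_0(\chi^{\sigma^{-1}})^\sigma,$$
for any $\sigma\in\c H$ such that $\chi\in\irr{G|\theta^\sigma}$. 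The well-definedness is the heart of the argument: if $\chi\in\irr{G|\theta^{\sigma_1}}\cap\irr{G|\theta^{\sigma_2}}$, then $\theta^{\sigma_1}$ and $\theta^{\sigma_2}$ are $G$-conjugate, whence $\sigma_2\sigma_1^{-1}\in\c H_{G,\theta}$ by the very definition of this subgroup; the $\c H_G$-equivariance of $\tau_0$ (Lemma~\ref{sigma_equiv}(b)) then yields $\tau_0(\chi^{\sigma_2^{-1}})^{\sigma_2\sigma_1^{-1}}=\tau_0(\chi^{\sigma_1^{-1}})$, which is exactly what is needed.

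Finally, $\c H$-equivariance is immediate: for $\rho\in\c H$ and $\chi\in\irr{G|\theta^\sigma}$, we have $\chi^\rho\in\irr{G|\theta^{\sigma\rho}}$ and hence $\tau(\chi^\rho)=\tau_0(\chi^{\sigma^{-1}})^{\sigma\rho}=\tau(\chi)^\rho$. A mirror construction starting from the $(H,M,\varphi)$-side produces a two-sided inverse, so $\tau$ is a bijection; and preservation of ratios of character degrees is inherited from $\tau_0$ together with $\theta^\sigma(1)=\theta(1)$ and $\varphi^\sigma(1)=\varphi(1)$. I expect the only real subtlety to lie in the well-definedness step, where the $\c H$-triple hypothesis---which is exactly what ensures that $\c H_{G,\theta}$ controls the $G$-orbit of $\theta$ inside $\theta^{\c H}$---makes the $\c H_G$-equivariance of Lemma~\ref{sigma_equiv}(b) sufficient.
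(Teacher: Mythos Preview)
Your proposal is correct and follows essentially the same approach as the paper: both twist $(\c P,\c P')$ by $\sigma\in\c H$ to obtain bijections $\irr{G|\theta^\sigma}\to\irr{H|\varphi^\sigma}$ satisfying $\tau_\sigma=\sigma\circ\tau_0\circ\sigma^{-1}$, and then glue these using the $\c H_G$-equivariance of $\tau_0$ from Lemma~\ref{sigma_equiv}(b). The only cosmetic difference is that the paper fixes a set $\{\sigma_1,\ldots,\sigma_s\}$ of representatives for the $G$-orbits on $\theta^{\c H}$ (equivalently, for the $\c H_G$-cosets in $\c H$) and defines $\tau$ piecewise on the disjoint union $\irr{G|\theta^{\c H}}=\dot\bigcup_i\irr{G|\theta^{\sigma_i}}$, whereas you allow an arbitrary $\sigma$ and absorb the ambiguity into a well-definedness check; the paper's decomposition also makes bijectivity immediate without the mirror construction.
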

   
   \begin{proof}
   By the definition of $\c H$-character triples, we have that
   $G$ acts on $\theta^{\c H}$. 
   Suppose that $\theta_1, \ldots, \theta_s$ are representatives
   of the $G$-orbits,
   where say $\theta_1=\theta$, and $\theta_i=\theta^{\sigma_i}$.
   Notice that $G_{\theta_i}=G_\theta$.
   Also notice that if we set $\varphi_i=\varphi^{\sigma_i}$,
   then $\varphi_1, \ldots, \varphi_s$ are representatives
   of the $H$-orbits on $\varphi^{\c H}$ (using that $(H\times \c H)_\theta=(H\times \c H)_\varphi )$.
   If $(\c P, \c P')$ is  associated with
   $(G,N,\theta)_{\c H} \ge_c(H,M,\varphi)_{\c H}$,
   then $(\c P^{\sigma_i}, (\c P')^{\sigma_i})$ is associated with 
   $(G,N,\theta_i)_{\c H} \ge_c(H,M,\varphi_i)_{\c H}$.
   We have that
   $$\irr{G|\theta^{\c H}}=\dot{\bigcup_{i}} \, \irr{G|\theta_i} \text{  \ and \  } \irr{H|\varphi^{\c H}}=\dot{\bigcup_{i}} \, \irr{H|\varphi_i}.$$
  Note that if we write $\tau_i$ for the bijection 
  $\irr{G|\theta_i}\rightarrow \irr{H|\varphi_i}$ given by Lemma \ref{sigma_equiv}, then 
  $\tau_i\circ\sigma_i=\sigma_i\circ \tau_1$. Let $\c H_G$ be as in Lemma \ref{sigma_equiv}. 
  Recall $\c H_G/\c H_\theta\leq \c H /\c H_\theta$ is isomorphic to $G/G_\theta$. 
  Note that $\c H_G$ is the stabilizer in $\c H$ of any of the $G$-orbits on 
  $\theta^{\c H}$ (this is because $\c H$ is abelian). Hence $\c H=\dot \bigcup_i \c H_G \sigma_i$.   
 Let $\tau \colon \irr{G|\theta^{\c H}}\rightarrow \irr{H|\varphi^{\c H}}$ 
 be the bijection defined in the obvious way from the bijections $\tau_i$. 
 Given $\chi \in \irr{G|\theta_1}$ and $\sigma=\omega \sigma_i \in \c H$ with $\omega \in \c H_G$,
  we have that 
$$\tau(\chi^\sigma)=\tau_i(\chi^{\omega \sigma_i})=
\tau_i(\chi^{\sigma_i})^\omega=
(\tau_1(\chi))^{\sigma_i \omega}=
\tau(\chi)^\sigma,$$
 where we use that the bijections $\tau_i$ are $\c H_G$-equivariant and 
 $\tau_i\circ\sigma_i=\sigma_i\circ \tau_1$. It easily follows that $\tau$ is $\c H$-equivariant. 
As each $\tau_i$ preserves character degree ratios, then so does $\tau$.
  \end{proof}
   
 Let $N \nor G$ and $\theta \in \irr N$ not necessarily satisfying $G_{\theta^{\c H}}=G$.
 By the Clifford correspondence, induction of characters defines a bijection
 $$\irr{X|\theta^{\sigma}}\rightarrow \irr{G|\theta^{\sigma}}\, ,$$
 for every $\sigma \in \c H$, whenever $G_\theta \sbs X\leq G$.
 Hence induction of characters defines an $\c H$-equivariant surjective
 map
 $$\irr{X|\theta^{\c H}}\rightarrow \irr{G|\theta^{\c H}}\, ,$$
 which turns out to be injective if, and only if, $G_{\theta^{\c H}}\sbs X$.
 
 \begin{cor}\label{corHequivariant}
 Let $N \nor G$ and $H\leq G$ be such that $G=NH$. Write $M=N\cap H$. Suppose
 that $(G_{\theta^{\c H}}, N, \theta)_{\c H}\geq_c (H_{\varphi^{\c H}}, M, \varphi)_{\c H}$. 
 Then there is an $\c H$-equivariant bijection
 $$\irr{G|\theta^{\c H}}\rightarrow \irr{H|\varphi^{\c H}}$$
 that preserves ratios of character degrees. 
 \end{cor}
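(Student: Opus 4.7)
The plan is to reduce to Theorem \ref{Hequivariant} by passing to the set-stabilizers and then transport the result back to $G$ and $H$ by Clifford induction. Write $X=G_{\theta^{\c H}}$ and $Y=H_{\varphi^{\c H}}$ for brevity. By hypothesis $(X,N,\theta)_{\c H}\geq_c(Y,M,\varphi)_{\c H}$, so Theorem \ref{Hequivariant} supplies an $\c H$-equivariant bijection
$$\tau^{*}\colon \irr{X\mid\theta^{\c H}}\longrightarrow \irr{Y\mid\varphi^{\c H}}$$
preserving ratios of character degrees with respect to $\theta^{\c H}$ and $\varphi^{\c H}$, respectively.

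Next I would use the induction maps discussed in the paragraph preceding the corollary. Since $G_\theta\subseteq X$ and $X=G_{\theta^{\c H}}$, that discussion gives that character induction
$$\mathrm{Ind}_X^G\colon \irr{X\mid\theta^{\c H}}\longrightarrow \irr{G\mid\theta^{\c H}}$$
is an $\c H$-equivariant bijection (surjectivity and $\c H$-equivariance are stated there; bijectivity holds because $G_{\theta^{\c H}}=X\subseteq X$). By exactly the same argument applied inside $H$, the map $\mathrm{Ind}_Y^H\colon \irr{Y\mid\varphi^{\c H}}\to\irr{H\mid\varphi^{\c H}}$ is also an $\c H$-equivariant bijection. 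Define
$$\tau\;=\;\mathrm{Ind}_Y^H\circ \tau^{*}\circ(\mathrm{Ind}_X^G)^{-1}\colon \irr{G\mid\theta^{\c H}}\longrightarrow \irr{H\mid\varphi^{\c H}}.$$
Since all three factors are $\c H$-equivariant bijections, so is $\tau$.

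It remains to check that $\tau$ preserves ratios of character degrees. If $\chi=\psi^G\in\irr{G\mid\theta^{\c H}}$ with $\psi\in\irr{X\mid\theta^{\c H}}$, then $\chi(1)=[G:X]\psi(1)$, and similarly $\tau(\chi)(1)=[H:Y]\tau^{*}(\psi)(1)$. Because $\tau^{*}$ preserves the ratios with respect to $\theta^{\c H}$ and $\varphi^{\c H}$, and any two $\c H$-conjugates of $\theta$ (respectively of $\varphi$) have the same degree, the result reduces to verifying $[G:X]=[H:Y]$. This index equality follows from condition (i) in Definition \ref{Hiso} applied to $(X,N,\theta)_{\c H}\geq_c(Y,M,\varphi)_{\c H}$: one has $X=NY$ and $N\cap Y=M$, and of course $G=NH$ and $N\cap H=M$, so by the second isomorphism theorem $[G:X]=[G/N:X/N]=[H/M:Y/M]=[H:Y]$.

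The only potential subtlety, and what I would double-check most carefully, is the $\c H$-equivariance of induction: for $\sigma\in\c H$ and $\psi\in\irr{X}$ one needs $(\psi^{G})^{\sigma}=(\psi^{\sigma})^{G}$, which holds since $\sigma$ fixes the rational coefficients in the induction formula and commutes with conjugation on character values. Once this is in hand, the three displayed bijections compose cleanly and no further work is required.
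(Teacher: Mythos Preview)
Your proposal is correct and follows essentially the same route as the paper: apply Theorem \ref{Hequivariant} at the level of the set-stabilizers $X=G_{\theta^{\c H}}$ and $Y=H_{\varphi^{\c H}}$, then transport via the $\c H$-equivariant induction bijections described just before the corollary. Your explicit verification of $[G:X]=[H:Y]$ from condition (i) of Definition \ref{Hiso} makes the degree-ratio preservation more transparent than the paper's terse ``the conclusion then follows from the comments preceding this result''.
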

 \begin{proof}
 Let $\tau\colon \irr{G_{\theta^{\c H}}|\theta^{\c H}}\rightarrow \irr{H_{\varphi^{\c H}}|\varphi^{\c H}}$
 be the $\c H$-equivariant bijection preserving ratios of character degrees given by Theorem \ref{Hequivariant}. 
Define $\hat \tau \colon \irr{G|\theta^{\c H}}\rightarrow \irr{H|\varphi^{\c H}} $ in the following way.
For $\chi \in \irr{G|\theta^{\c H}}$, let $\psi\in \irr{G_{\theta^{\c H}}|\theta^{\c H}}$ be such that
$\psi^G=\chi$, then $\hat \tau(\chi):=\tau(\psi)^H$. The conclusion then follows from the comments
preceding this result. 
 \end{proof}

\section{Constructing new $\c H$-triples from old ones}\label{newfromold}
The following results show 
easy ways to construct $\c H$-triples from given ones.
The proofs are straightforward from the definitions. 
Note that if $(G,N,\theta)_{\c H} \geq_c(H,M,\varphi)_{\c H}$ and 
$N\sbs J\leq G$, then
 $$(J,N,\theta)_{\c H} \ge_c(J\cap H,M,\varphi)_{\c H}.$$

\begin{lem}\label{iso} Let $(G,N,\theta)_{\c H} \geq_c(H,M,\varphi)_{\c H}$. 
Suppose that $f \colon G \rightarrow \hat G$ is a group isomorphism. 
Then  $(\hat G,\hat N,\theta^f)_{\c H} \ge_c(\hat H,\hat M,\varphi^f)_{\c H}$
where we write $\hat J =J^f$ (using exponential notation for images of $f$) 
and $\psi^f(x^f)=\psi(x)$ for every $\psi \in \irr J$ and $x \in J$.
\end{lem}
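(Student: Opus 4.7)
The plan is to transport all the data defining the order $\geq_c$ across the isomorphism $f$, and verify verbatim each of the four conditions in Definition \ref{Hiso} for the transported triples. Let $(\c P,\c P')$ give $(G,N,\theta)_{\c H}\geq_c (H,M,\varphi)_{\c H}$, and define $\hat{\c P}=\c P^f$ and $\hat{\c P'}=(\c P')^f$ using the convention introduced before Lemma~\ref{mu} (so that $\c P^f(g^f)=\c P(g)$). I will argue that $(\hat{\c P},\hat{\c P'})$ gives the desired order between the $f$-transported triples.

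For conditions (i) and (ii), everything is routine: applying $f$ to $G=NH$, $N\cap H=M$ and $\cent G N\subseteq H$ yields the corresponding statements for $\hat G,\hat N,\hat H,\hat M$. For (ii) the point is that $f$ commutes with the entry-wise Galois action on character values: one checks directly that $(\theta^f)^{h^f\sigma}=(\theta^{h\sigma})^f$ for every $h\in H$ and $\sigma\in\c H$, and similarly for $\varphi$. Hence the natural bijection $(h,\sigma)\mapsto (h^f,\sigma)$ identifies $(H\times\c H)_\theta$ with $(\hat H\times\c H)_{\theta^f}$ and $(H\times\c H)_\varphi$ with $(\hat H\times\c H)_{\varphi^f}$, so the equality in (ii) transports. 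For (iii), the representation $\hat{\c P}=\c P^f$ is a projective representation of $\hat G_{\theta^f}=(G_\theta)^f$ associated with $\theta^f$ (this is clear from the definitions), its factor set $\hat\alpha(x^f,y^f)=\alpha(x,y)$ still takes roots of unity values and lies in $\QQ^{\mathrm{ab}}$, and likewise for $\hat\alpha'$. The restriction identity $\hat\alpha_{\hat H_{\theta^f}\times \hat H_{\theta^f}}=\hat\alpha'_{\hat H_{\theta^f}\times \hat H_{\theta^f}}$ is the $f$-image of the corresponding identity for $(\alpha,\alpha')$. Finally, for $c\in \cent GN$, one has $c^f\in\cent{\hat G}{\hat N}$ and $\hat{\c P}(c^f)=\c P(c)=\zeta_c I=\c P'(c)=\hat{\c P'}(c^f)$, so the same scalars $\zeta_c$ work on the image side.

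Condition (iv) is the one requiring a small computation. For $(h^f,\sigma)\in (\hat H\times\c H)_{\theta^f}$, unfolding the definition of $\c P^{g\sigma}$ from Lemma~\ref{mu} gives
$$\hat{\c P}^{h^f\sigma}(y^f)=\hat{\c P}\bigl(h^f y^f (h^f)^{-1}\bigr)^\sigma=\c P(hyh^{-1})^\sigma=\c P^{h\sigma}(y),$$
and hence $\hat{\c P}^{h^f\sigma}\sim (\mu_{h\sigma})^f\,\hat{\c P}$, where $(\mu_{h\sigma})^f$ denotes the pullback of $\mu_{h\sigma}$ along $f^{-1}$. The same calculation applied to $\c P'$ yields $\hat{\c P'}^{h^f\sigma}\sim(\mu'_{h\sigma})^f\,\hat{\c P'}$. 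By the uniqueness clause in Lemma~\ref{mu}, the $\mu$-functions for the transported pair are $\hat\mu_{h^f\sigma}=(\mu_{h\sigma})^f$ and $\hat\mu'_{h^f\sigma}=(\mu'_{h\sigma})^f$. Since $\mu_{h\sigma}$ and $\mu'_{h\sigma}$ agree on $H_\theta$ by the hypothesized (iv), their $f$-transports agree on $\hat H_{\theta^f}=(H_\theta)^f$, which is (iv) for $(\hat{\c P},\hat{\c P'})$. The only potential obstacle is bookkeeping — making sure that $f$ is applied consistently to group elements, matrix entries, and characters, and that it commutes cleanly with both conjugation in $G$ and the Galois action — but nothing beyond this is required, so the statement follows.
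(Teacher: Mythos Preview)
Your proof is correct and is precisely the straightforward verification from the definitions that the paper alludes to (the paper states this lemma without proof, noting only that the results in that section are ``straightforward from the definitions''). Your careful transport of each of the four conditions in Definition~\ref{Hiso} through $f$, together with the key compatibility $(\theta^f)^{h^f\sigma}=(\theta^{h\sigma})^f$ and the uniqueness of the $\mu$-functions from Lemma~\ref{mu}, fills in exactly the expected details.
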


Sometimes it will be easier to apply the following 
weaker version of the above result. 
\begin{lem}\label{conjugate} Let $N\nor G$ and $H\leq G$ be such 
that $G=NH$. Write $M=H\cap N$. Suppose that $N\sbs K\leq G$ 
and $(K, N, \theta)_{\c H}\geq_c(K\cap H, M, \varphi)_{\c H}$. 
Then for every $h \in H$
$$(K^h, N, \theta^h)_{\c H}\geq_c(K^h\cap H, M, \varphi^h)_{\c H}.$$
\end{lem}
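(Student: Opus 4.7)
The proof is essentially an application of Lemma \ref{iso} to the inner automorphism of $G$ given by conjugation by $h$. The plan is as follows.

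Define $f\colon K \to K^h$ by $f(x) = x^h := h^{-1}xh$. Since conjugation by any element of $G$ is a group automorphism of $G$, the restriction $f$ is a group isomorphism between $K$ and $K^h$. The first step is to verify that this isomorphism carries the data of the hypothesized inequality $(K, N, \theta)_{\c H} \geq_c (K\cap H, M, \varphi)_{\c H}$ to the data appearing in the conclusion. Specifically, I need to identify the images under $f$:
\begin{itemize}
\item $N^f = N^h = N$, because $N \nor G$;
\item $M^f = M^h = M$, because $M = H \cap N \subseteq H$ contains $h$-conjugates inside $H^h \cap N^h = H \cap N = M$, and $|M^h|=|M|$;
\item $(K\cap H)^f = (K\cap H)^h = K^h \cap H$, because $H^h = H$ for $h\in H$, so $(K\cap H)^h \subseteq K^h \cap H$; conversely, if $x \in K^h \cap H$, write $x = y^h$ with $y \in K$, and then $y = x^{h^{-1}} \in H^{h^{-1}} = H$, giving $y \in K \cap H$.
\end{itemize}

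Once these identifications are in place, the second step is simply to invoke Lemma \ref{iso} with $G$ (in the notation of that lemma) replaced by $K$, $\hat G$ by $K^h$, and $f$ the conjugation map above. That lemma outputs precisely
$$(K^h, N, \theta^h)_{\c H} \,\ge_c\, (K^h \cap H, M, \varphi^h)_{\c H},$$
using $\theta^f = \theta^h$ and $\varphi^f = \varphi^h$ under exponential notation for $f$.

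There is no genuine obstacle here: all the substantive work (translating projective representations, factor sets, and the functions $\mu_{a}$ through an isomorphism) has already been absorbed into Lemma \ref{iso}. The only point requiring a brief verification is the identity $(K\cap H)^h = K^h \cap H$, which relies on the fact that both $N$ (being normal in $G$) and $H$ (because $h\in H$) are fixed setwise by conjugation by $h$. This is why the statement is restricted to conjugation by elements of $H$ rather than arbitrary elements of $G$.
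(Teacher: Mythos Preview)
Your proposal is correct and is exactly the approach the paper has in mind: the paper introduces Lemma~\ref{conjugate} as a ``weaker version'' of Lemma~\ref{iso} and gives no separate proof, so deriving it by applying Lemma~\ref{iso} to the conjugation isomorphism $f\colon K\to K^h$, $x\mapsto x^h$, with the identifications $N^h=N$, $M^h=M$, and $(K\cap H)^h=K^h\cap H$ (valid because $h\in H$), is precisely what is intended.
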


Under the hypotheses of the above lemma, assume that $(\c P, \c P')$ gives 
$(K, N, \theta)_{\c H}\geq_c(K\cap H, M, \varphi)_{\c H}$ and let  $h \in H$, then
we will consider that 
$$(K^h, N, \theta^h)_{\c H}\geq_c(K^h\cap H, M, \varphi^h)_{\c H}$$
is given by $(\c P^h, (\c P')^h)$. Hence, if $\tau_\theta$ and 
$\tau_{\theta^h}$ are the bijections given by Theorem \ref{Hequivariant},
we will have that $\tau_{\theta^h}(\chi^h)=\tau_\theta(\chi)^h$
for every $\chi \in \irr{G|\theta^{\c H}}$. In particular, if
$\hat \tau_{\theta}$ and $\hat \tau_{\theta^h}$ are the bijections 
given by Corollary \ref{corHequivariant}, then $\hat \tau_{\theta}=\hat \tau_{\theta^h}$.

\begin{lem}\label{Hconjugate}
Let $(G,N,\theta)_{\c H} \geq_c(H,M,\varphi)_{\c H}$ and $\sigma \in \c H$.
Then $(G,N,\theta^\sigma)_{\c H} \geq_c(H,M,\varphi^\sigma)_{\c H}$.
\end{lem}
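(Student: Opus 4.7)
The plan is to show that if $(\c P,\c P')$ gives the relation $(G,N,\theta)_{\c H}\ge_c(H,M,\varphi)_{\c H}$, then the pair $(\c P^\sigma,(\c P')^\sigma)$ obtained by applying $\sigma$ entry-wise gives $(G,N,\theta^\sigma)_{\c H}\ge_c(H,M,\varphi^\sigma)_{\c H}$. That $(G,N,\theta^\sigma)_{\c H}$ and $(H,M,\varphi^\sigma)_{\c H}$ are again $\c H$-triples was already observed right after the definition of $\c H$-triples in Section \ref{Htriples}, so it remains to verify conditions (i)--(iv) of Definition \ref{Hiso} for the new pair.

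Condition (i) is unaffected since it involves only the groups $G$, $N$, $H$, $M$. For (ii), using that $\c G$ is abelian and that the $\c G$-action on $\irr N$ commutes with $H$-conjugation, a short computation shows $(H\times\c H)_{\theta^\sigma}=(H\times\c H)_\theta$ and analogously for $\varphi^\sigma$, so (ii) for the new pair transfers directly from (ii) for the old one. For (iii), $\c P^\sigma$ is a projective representation of $G_{\theta^\sigma}=G_\theta$ associated with $\theta^\sigma$, with entries in $\Q^{\rm ab}$ and factor set $\alpha^\sigma$, which still takes roots of unity values since $\sigma$ permutes these; and similarly for $(\c P')^\sigma$ with factor set $(\alpha')^\sigma$. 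The agreement of the factor sets on $H_{\theta^\sigma}\times H_{\theta^\sigma}=H_\theta\times H_\theta$ and the central-scalar condition follow by applying $\sigma$ entry-wise to the corresponding properties of $(\c P,\c P')$; for $c\in\cent GN$ the common scalar becomes $\zeta_c^\sigma$.

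Condition (iv) is the only substantive point. For $a=(h,\omega)\in(H\times\c H)_{\theta^\sigma}=(H\times\c H)_\theta$ and $x\in G_\theta$, I would compute, using that $\c G$ is abelian and that $\sigma$ commutes with matrix conjugation and similarity,
\[
(\c P^\sigma)^{h\omega}(x)=\c P(hxh^{-1})^{\sigma\omega}=(\c P(hxh^{-1})^\omega)^\sigma=(\c P^{h\omega}(x))^\sigma\sim(\mu_{h\omega}(x))^\sigma\,\c P^\sigma(x),
\]
so the uniqueness clause of Lemma \ref{mu} identifies the $\mu$-function attached to the pair $(\c P^\sigma,h\omega)$ as the entry-wise $\sigma$-conjugate of $\mu_{h\omega}$. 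The same computation applied to $\c P'$ yields the entry-wise $\sigma$-conjugate of $\mu'_{h\omega}$. Since $\mu_{h\omega}$ and $\mu'_{h\omega}$ already agree on $H_\theta=H_{\theta^\sigma}$ by hypothesis, their $\sigma$-conjugates agree on that same subgroup, proving (iv). No serious obstacle is anticipated: the main thing to keep straight is the compatibility of entry-wise $\sigma$-action with conjugation, similarity and restriction, which makes the structure of the original pair transparently carry over.
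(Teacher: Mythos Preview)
Your proposal is correct and follows exactly the approach the paper intends: the paper omits a proof, declaring it ``straightforward from the definitions,'' but immediately after the lemma states that $(\c P^\sigma,(\c P')^\sigma)$ is the pair giving the new relation, which is precisely the pair you verify. Your detailed check of conditions (i)--(iv), in particular the observation that $(H\times\c H)_{\theta^\sigma}=(H\times\c H)_\theta$ via abelianness of $\c H$ and that the new $\mu$-functions are the $\sigma$-conjugates of the old ones, is a faithful unpacking of what the paper leaves implicit.
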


Suppose that $(\c P, \c P')$ gives $(G, N, \theta)_{\c H}\geq_c (H, M, \varphi)_{\c H}$. 
For every $\sigma \in \c H$, 
 we will always consider that 
 $(G, N, \theta^\sigma)_{\c H}\geq_c (H, M, \varphi^\sigma)_{\c H}$
 is given by $(\c P^\sigma, (\c P')^\sigma)$. In this way, if
 $\tau_\theta$ and $\tau_{\theta^\sigma}$ are the bijections given by Theorem \ref{Hequivariant}, then by construction
 $\tau_{\theta^\sigma}(\chi^\sigma)=\tau_\theta(\chi)^\sigma$ for every $\chi \in \irr{G|\theta^{\c H}}$.
 By Theorem \ref{Hequivariant} $\tau_\theta$ is $\c H$-equivariant, 
 in particular $\tau_{\theta^\sigma}=\tau_\theta$. Hence, by construction
 the bijections given by Corollary \ref{corHequivariant} are also equal.

\begin{lem}\label{quotient} Let $(G,N,\theta)_{\c H} \geq_c(H,M,\varphi)_{\c H}$. 
Suppose that $L\nor G$ is contained in $\ker \theta \cap \ker \varphi\cap \cent G N$, 
and $\cent {G/L}{N/L}=\cent G N /L$. Then
$$(G/L,N/L,\theta)_{\c H} \geq_c(H/L,M/L,\varphi)_{\c H},$$ where 
$\theta$ and $\varphi$ are considered as characters of $N/L$ and $M/L$.
\end{lem}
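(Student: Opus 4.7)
The plan is to verify that the pair $(\c P,\c P')$ giving $(G,N,\theta)_{\c H}\ge_c(H,M,\varphi)_{\c H}$ descends to a pair that gives the desired order between the quotient triples. First I would note that $L\sbs N$ (since $L\sbs\ker\theta$) and $L\sbs\cent GN\sbs H$, so $L\sbs M$. Moreover $L$ is central in $N$, so by Schur's lemma $\c P_L$ consists of scalars; since $L\sbs\ker\theta$ those scalars are $1$, giving $\c P(l)=I$ for all $l\in L$, and analogously $\c P'(l)=I$. Because $\c P$ is associated with $\theta$, its factor set $\alpha$ is trivial whenever either argument lies in $N$, so $\c P(gl)=\c P(g)=\c P(lg)$ for $g\in G_\theta$, $l\in L$. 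Therefore $\bar{\c P}(gL):=\c P(g)$ is a well-defined projective representation of $G_\theta/L=(G/L)_\theta$ with factor set $\bar\alpha(\bar x,\bar y):=\alpha(x,y)$, associated with $\theta$ viewed in $\irr{N/L}$; the same construction produces $\bar{\c P}'$ on $H_\varphi/L=(H/L)_\varphi$. This pair is the candidate.

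Next I would check the four conditions of Definition \ref{Hiso}. Condition (i) reduces to $\cent{G/L}{N/L}=\cent GN/L\sbs H/L$ by hypothesis and the original $\cent GN\sbs H$. For (ii), because $L\sbs\cent GN$ acts trivially on $\irr N$, the stabilizers $((H/L)\times\c H)_\theta$ and $((H/L)\times\c H)_\varphi$ are the images of $(H\times\c H)_\theta=(H\times\c H)_\varphi$ modulo $L\times 1$, so the required equality persists. Condition (iii) is inherited: the factor sets $\bar\alpha,\bar\alpha'$ still take roots of unity values and agree on $(H_\theta/L)\times(H_\theta/L)$, and for $\bar c\in\cent{G/L}{N/L}=\cent GN/L$ the scalars associated with $\bar{\c P}(\bar c)$ and $\bar{\c P}'(\bar c)$ are both $\zeta_c$.

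Finally, for (iv), given $(\bar h,\sigma)\in((H/L)\times\c H)_\theta$, I would lift to $(h,\sigma)\in(H\times\c H)_\theta$ and compute $\bar{\c P}^{\bar h\sigma}(\bar g)=\c P^{h\sigma}(g)$, so that by the uniqueness in Remark \ref{uniquenessmu} the function $\mu_{\bar h\sigma}$ on $(G/L)_\theta$ is obtained by descending $\mu_{h\sigma}$; this descent is legitimate because $\mu_{h\sigma}$ is constant on $N$-cosets, hence on $L$-cosets. The same identity applies to $\mu'_{\bar h\sigma}$, so the equality of $\mu_{h\sigma}$ and $\mu'_{h\sigma}$ on $H_\theta$ passes to the quotient. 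The main obstacle is the initial descent of $\c P$ to the quotient, which hinges on the fact that $L\sbs\ker\theta\cap\zent N$ forces $\c P(l)=I$ and on the convention that $\c P$ associated with $\theta$ has factor set trivial on arguments from $N$; once this is secured, every remaining verification is a straightforward transfer through the projection $G\to G/L$.
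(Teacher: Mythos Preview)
Your proposal is correct and is precisely the kind of verification the paper has in mind: the paper does not give a proof of this lemma at all, stating instead that the proofs of this and the neighboring lemmas ``are straightforward from the definitions.'' Your argument---descending $(\c P,\c P')$ through the quotient by using that $L\sbs\ker\theta\cap\zent N$ forces $\c P(l)=I$, and that the associativity convention $\c P(ng)=\c P(n)\c P(g)$, $\c P(gn)=\c P(g)\c P(n)$ makes $\c P$ constant on $L$-cosets---is exactly the intended routine check, and your treatment of conditions (i)--(iv) is accurate.
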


\begin{lem}\label{direct}
Let $(G_i, N_i, \theta_i)_{\c H} \geq_c(H_i, M_i, \varphi_i)_{\c H}$ for $i=1,2$. Then
$$(G_{\theta^{\c H}}, N, \theta)_{\c H}\geq_c (H_{\varphi^{\c H}}, M, \varphi)_{\c H},$$
where $G=G_1\times G_2$, $H=H_1\times H_2$, $N=N_1\times N_2$, 
$M=M_1\times M_2$, $\theta=\theta_1\times \theta_2$ and $\varphi=\varphi_1\times \varphi_2$.
\end{lem}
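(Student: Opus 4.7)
The plan is to take $\c P_i$ and $\c P'_i$ giving the factor relations $(G_i, N_i, \theta_i)_{\c H} \geq_c (H_i, M_i, \varphi_i)_{\c H}$, form their external tensor products $\c P = \c P_1 \otimes \c P_2$ and $\c P' = \c P'_1 \otimes \c P'_2$, and verify the four conditions of Definition \ref{Hiso} componentwise, reducing the main work to the given factor-wise data.

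First I would observe that $G_\theta = (G_1)_{\theta_1} \times (G_2)_{\theta_2}$ and, using condition (ii) for the factors, $H_\varphi = (H_1)_{\theta_1} \times (H_2)_{\theta_2} = H_\theta$, so (ii) holds for the product. For (i), from $G_i = N_i H_i$ with $N_i \cap H_i = M_i$ one gets $G_{\theta^{\c H}} = N \cdot H_{\varphi^{\c H}}$ (each Galois element $\sigma$ witnessing that $(g_1,g_2)$ lies in $G_{\theta^{\c H}}$ can be corrected by an element of $N$ in each coordinate to land in $H_{\varphi^{\c H}}$), the intersection with $N$ is $M$, and $\cent{G}{N}=\cent{G_1}{N_1}\times \cent{G_2}{N_2} \subseteq H \subseteq H_{\varphi^{\c H}}$.

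For (iii), the representations $\c P$ and $\c P'$ are associated with $\theta$ and $\varphi$, have entries in $\Q^{\mathrm{ab}}$, and their factor sets $\alpha((x_1,x_2),(y_1,y_2)) = \alpha_1(x_1,y_1)\alpha_2(x_2,y_2)$ and similarly $\alpha'$ take roots-of-unity values with $\alpha_{H_\theta \times H_\theta} = \alpha'$ by the factor-wise analogues. For $c = (c_1, c_2) \in \cent{G}{N}$ one has $\c P(c) = \c P_1(c_1)\otimes \c P_2(c_2) = \zeta_{c_1}\zeta_{c_2}\, I$ and likewise $\c P'(c) = \zeta_{c_1}\zeta_{c_2}\, I$, so the associated scalars agree.

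The main technical step is (iv). By Lemma \ref{cor_trans} it suffices to check the agreement of $\mu_a$ and $\mu'_a$ on $H_\theta$ for $a = (h_1, h_2, \sigma)$ running over a transversal of $H_\theta$-cosets in $(H_{\varphi^{\c H}} \times \c H)_\theta$. Such an $a$ satisfies $\theta_i^{h_i \sigma} = \theta_i$ for each $i$, hence $(h_i, \sigma) \in (H_i \times \c H)_{\theta_i}$. Then the tensor structure gives
\begin{equation*}
\c P^{a}(x_1,x_2) = \c P_1^{(h_1,\sigma)}(x_1) \otimes \c P_2^{(h_2,\sigma)}(x_2) \sim (\mu_1)_{(h_1,\sigma)}(x_1)\,(\mu_2)_{(h_2,\sigma)}(x_2)\, \c P(x_1,x_2),
\end{equation*}
so by the uniqueness in Remark \ref{uniquenessmu} we have $\mu_a(x_1, x_2) = (\mu_1)_{(h_1,\sigma)}(x_1)\,(\mu_2)_{(h_2,\sigma)}(x_2)$, and the analogous identity holds for $\mu'_a$. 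Restricting to $H_\theta = (H_1)_{\theta_1} \times (H_2)_{\theta_2}$ and applying the factor-wise condition (iv) then yields $\mu_a|_{H_\theta} = \mu'_a$, as required.

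The only subtle bookkeeping is the mismatch between $G_{\theta^{\c H}}$ and $(G_1)_{\theta_1^{\c H}} \times (G_2)_{\theta_2^{\c H}}$ (membership in the former requires a single $\sigma \in \c H$ working simultaneously for both coordinates), but since our projective representations only live on $G_\theta$ and $H_\varphi$, this plays no role in the construction; it only needs to be handled when verifying (i) and when selecting the transversal for Lemma \ref{cor_trans}.
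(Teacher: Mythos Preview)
Your approach is correct and is essentially the same as the paper's: both construct the projective representations on $G_\theta=(G_1)_{\theta_1}\times(G_2)_{\theta_2}$ and $H_\varphi=(H_1)_{\varphi_1}\times(H_2)_{\varphi_2}$ as external tensor products (the paper cites \cite[Lemma~10.20]{Nav18} for this) and then check Definition~\ref{Hiso} componentwise; you simply spell out the verifications that the paper declares ``straightforward''. One minor point: your justification of condition~(ii) only records $H_\theta=H_\varphi$, whereas what is needed is the full equality $(H_{\varphi^{\c H}}\times\c H)_\theta=(H_{\varphi^{\c H}}\times\c H)_\varphi$; this follows by the same componentwise argument (an element $(h_1,h_2,\sigma)$ fixes $\theta$ iff each $(h_i,\sigma)\in(H_i\times\c H)_{\theta_i}=(H_i\times\c H)_{\varphi_i}$), and is worth stating explicitly.
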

\begin{proof}
The group theoretical conditions are easily checked. 
Also, it is easy to check that $(H_{\theta^\c H} \times \c H)_\theta=
(H_{\varphi^\c H} \times \c H)_\varphi$. Now we have to construct
appropriate   projective representations of $(G_{\theta^\c H})_\theta=G_\theta=(G_1)_{\theta_1} \times
(G_2)_{\theta_2}$ and 
$(H_{\varphi^\c H})_\varphi=H_\varphi=(H_1)_{\varphi_1} \times
(H_2)_{\varphi_2}$. This is done
as in Lemma 10.20  of \cite{Nav18}. 
 Checking conditions
(ii), (iii) and (iv) of Definition \ref{Hiso} is straightforward.
\end{proof}

Denote by ${\sf S}_m$ the symmetric group acting on $m$ letters. 
In the next two results we deal with $\c H$-triples and wreath products of groups.
We follow the notation in Chapter 10 of \cite{Nav18}. If $G$ is a finite group, then $G^m$
will denote the direct product $G \times \cdots \times G$ ($m$ times), and if $\theta \in \irr G$,
then in our context $\theta^m=\theta \times \cdots \times \theta \in \irr{G^m}$. Recall that 
${\sf S}_m$ acts naturally on $G^m$ by
$$(g_1, \ldots, g_m)^\omega=(g_{\omega^{-1}(1)}, \ldots, g_{\omega^{-1}(m)})$$
whenever $g_i \in G$ and $\omega \in {\sf S}_m$.

\begin{lem}\label{wreath} Let $(G, N, \theta)_{\c H}$ and 
$(H, M, \varphi)_{\c H}$ be $\c H$-triples such that $(G, N, \theta)_{\c H}\geq_c (H, M, \varphi)_{\c H}$. 
For any $m\in \Z_{>0}$
$$( (G_\theta \wr {\sf S}_m)\Delta^m G, N^m, \theta^m)_{\c H}\geq_c ((H_\theta \wr {\sf S}_m)\Delta^m H, M^m, \varphi^m)_{\c H},$$ 
where $\Delta^m\colon G \rightarrow G^m$ denotes the diagonal embedding of $G$ into the direct product $G^m$.\end{lem}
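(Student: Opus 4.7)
The plan is to verify the four conditions of Definition \ref{Hiso} directly for the $\c H$-triple pair in question. Set $\tilde G := (G_\theta \wr {\sf S}_m)\Delta^m G$, $\tilde H := (H_\theta \wr {\sf S}_m)\Delta^m H$, $\tilde N := N^m$, $\tilde M := M^m$, and write a general element of $\tilde H$ in the normal form $(k_1,\ldots,k_m)\omega\Delta(h)$ with $k_i\in H_\theta$, $\omega\in{\sf S}_m$, $h\in H$.

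For the group-theoretic conditions (i), the equality $G=NH$ combined with the normality of $G_\theta$ in $G$ (which holds because $(G,N,\theta)_{\c H}$ is an $\c H$-triple) gives $G_\theta^m=N^m H_\theta^m$, from which $\tilde G=\tilde N\tilde H$ follows. Comparing the normal form with an element of $N^m$ forces $\omega=1$, $h\in H\cap G_\theta=H_\theta$, and each coordinate into $M$, whence $\tilde N\cap\tilde H=\tilde M$. For $\cent{\tilde G}{\tilde N}\subseteq\tilde H$, any centralizing $y\omega\Delta(g)$ must have $\omega=1$ (otherwise it fails to fix a generic $(n_1,\ldots,n_m)\in N^m$ with only one non-trivial coordinate), whence $y_i g\in\cent G N\cap G_\theta\subseteq H_\theta$ places the element in $H_\theta^m\subseteq\tilde H$.

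A coordinate-wise computation gives, for $x=y\omega\Delta(h)\in\tilde H$ and $\sigma\in\c H$,
\[
(\theta^m)^{x\sigma}=\prod_{j=1}^m\theta^{y_{\omega(j)}h\sigma},
\]
which equals $\theta^m$ iff $\theta^{h\sigma}=\theta$, using $y_j\in H_\theta$; the analogous statement holds for $\varphi^m$, so the hypothesis $(H\times\c H)_\theta=(H\times\c H)_\varphi$ yields condition (ii). The same calculation applied to $\Delta^m G$ confirms that $(\tilde G,\tilde N,\theta^m)_{\c H}$ is indeed an $\c H$-triple and that $\tilde G_{\theta^m}=G_\theta\wr{\sf S}_m$. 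For condition (iii), starting from $(\c P,\c P')$ realizing the original order, I define the standard tensor-permutation projective representation
\[
\tilde{\c P}((g_1,\ldots,g_m)\omega)=\bigl(\c P(g_1)\otimes\cdots\otimes\c P(g_m)\bigr)P_\omega,
\]
where $P_\omega$ is the permutation matrix of $\omega$ acting on $m$ tensor factors, and $\tilde{\c P'}$ by the same formula on $H_\theta\wr{\sf S}_m$. Entries remain in $\QQ^{\rm ab}$, the new factor set is a product of values of $\alpha$ (hence still roots of unity), the two factor sets restrict compatibly on $\tilde H_{\theta^m}\times\tilde H_{\theta^m}$, and for $c=(c_1,\ldots,c_m)\in\cent{\tilde G}{\tilde N}$ each $c_i\in\cent G N\subseteq G_\theta$, so both $\tilde{\c P}(c)$ and $\tilde{\c P'}(c)$ equal the scalar $\prod_i\zeta_{c_i}\cdot I$.

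For condition (iv), Lemma \ref{cor_trans} reduces the verification to a transversal of $\tilde H_{\theta^m}$-cosets in $(\tilde H\times\c H)_{\theta^m}$, which by the stabilizer calculation above may be taken of the form $(\Delta(h),\sigma)$ with $(h,\sigma)$ running through a transversal of $H_\theta$-cosets in $(H\times\c H)_\theta$. Taking $M$ as in Lemma \ref{mu} so that $\c P^{h\sigma}(k)=\mu_{h\sigma}(k)M^{-1}\c P(k)M$, the key observation is that $M^{\otimes m}$ commutes with every $P_\omega$ (since $M\otimes\cdots\otimes M$ is fixed under permutation of tensor factors), and a direct calculation then yields
\[
\tilde\mu_{(\Delta(h),\sigma)}((k_1,\ldots,k_m)\omega)=\prod_{i=1}^m\mu_{h\sigma}(k_i),
\]
with the same formula for $\tilde\mu'$ on $\tilde H_{\theta^m}=H_\theta\wr{\sf S}_m$; the hypothesis $\mu_{h\sigma}|_{H_\theta}=\mu'_{h\sigma}$ then closes the argument. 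The sole delicate point, which I expect to be the main obstacle, is precisely this commutation of $M^{\otimes m}$ with $P_\omega$: it is what allows the similarity on the full wreath product to reduce cleanly to a tensor product of similarities on each factor, and thereby reduces the wreath-product check of (iv) to the corresponding check already granted for $(\c P,\c P')$.
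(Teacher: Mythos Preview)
Your proposal is correct and follows essentially the same approach as the paper's proof: both construct the tensor--permutation projective representations $\tilde{\c P},\tilde{\c P'}$, invoke Lemma~\ref{cor_trans} to reduce condition~(iv) to diagonal representatives $(\Delta(h),\sigma)$, and then use precisely the commutation $M^{\otimes m}P_\omega=P_\omega M^{\otimes m}$ to obtain $\tilde\mu_{(\Delta(h),\sigma)}((k_1,\ldots,k_m)\omega)=\prod_i\mu_{h\sigma}(k_i)$. The only difference is cosmetic: the paper defers the verification of (i)--(iii) and the identification $\tilde G_{\theta^m}=G_\theta\wr{\sf S}_m$ to Theorem~10.21 of \cite{Nav18}, whereas you sketch these directly.
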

\begin{proof}
We claim that $(G^m)_{(\theta^m)^{\c H}}= (G_\theta)^m \Delta^m G$.
 Let $(g_1, \ldots, g_m)\in ( G^m)_{(\theta^m)^{\c H}}$. 
 Then there exists some $\sigma \in \c H$ such that 
 $( \theta^m)^{(g_1, \ldots, g_m)}=( \theta^m )^{\sigma}$.
  Hence $\theta^{g_i}=\theta^\sigma$ for each $i$. 
In particular, $G_\theta g_i=G_\theta g_j$
for every $i$ and $j$.  Hence, 
we can write $g_i=x_ig_1$ for some  
 $x_i \in G_\theta$ and for every $i$. 
 Thus $(g_1, \ldots, g_m) \in (G_\theta)^m \Delta^m G$.
The other inclusion is also clear using that given $g \in G$, there
is $\sigma \in \c H$ such that $\theta^g=\theta^\sigma$, by our hypothesis. 
This also implies that
$$(G\wr {\sf S}_m)_{{(\theta^m)}^{\c H}}=(G_\theta \wr {\sf S}_m)\Delta^m G\, .$$
Similarly, $(H\wr {\sf S}_m)_{{(\varphi^m)}^{\c H}}=(H_\varphi \wr {\sf S}_m)\Delta^m H=(H_\theta \wr {\sf S}_m)\Delta^m H$ 
as $H_\theta=H_\varphi$ by hypothesis. 

\smallskip

We follow the proof of Theorem 10.21 of \cite{Nav18}.
First, we easily check that $(G \wr {\sf S}_m)_{\theta^m}=G_\theta  \wr {\sf S}_m$.
Conditions (i) and (ii) of Definition \ref{Hiso} for 
$$( (G_\theta \wr {\sf S}_m)\Delta^m G, N^m, \theta^m)_{\c H}\geq_c ((H_\theta \wr {\sf S}_m)\Delta^m H, M^m, \varphi^m)_{\c H}$$ 
follow from the above discussion together with the discussions inTheorem 10.21 of \cite{Nav18}. 
Let $(\c P, \c P')$ be associated with  $(G, N, \theta)_{\c H}\geq_c (H, M, \varphi)_{\c H}$. 
Construct projective representations $\tilde{\c P}$ and 
$\tilde{\c P'}$ of $G_\theta \wr {\sf S}_m$ and of $H_\theta \wr {\sf S}_m$ as in Theorem 10.21 of \cite{Nav18}. 
Condition (iii) of Definition \ref{Hiso} is proven in Theorem 10.21 \cite{Nav18}. 
It remains to check condition (iv) of Definition \ref{Hiso}.

\smallskip

For any $(\gamma,\sigma) \in
 ((H \wr {\sf S}_m)\times \c H)_{\theta^m}$, we have that 
 $\gamma \in (H_\theta \wr {\sf S}_m)\Delta^m H$. We denote by 
 $\tilde \mu_{\gamma \sigma}$ and $\tilde \mu'_{\gamma \sigma}$ 
 the functions given by Lemma \ref{mu} with respect to 
 the action of $\gamma \sigma$ on 
 $\tilde{\c P}$ 
 and $\tilde{\c P'}$. By Lemma \ref{cor_trans} we only need to check 
 the condition for a transversal of $H_\theta\wr{\sf S}_m$ in 
 $((H_\theta \wr {\sf S}_m)\Delta^m H \times \c H)_{\theta^m}$. 
 In particular, it is enough to check the condition for elements 
 $(\gamma, \sigma)$ such that $\gamma=(y, \ldots, y)=\Delta^m y$ 
 for some $y \in H$ with $\theta^{y\sigma}=\theta$.
 
We check below that, for every $x_i \in H_\theta$ and $\omega \in {\sf S}_m$,
 $$\tilde \mu _{ \gamma \sigma}((x_1, \ldots, x_m)\omega)=\prod_{i=1}^m \mu_{y\sigma}(x_i).$$ 
 Given $(x_1,\ldots, x_m)\omega \in H_\theta\wr {\sf S}_m$ we have that
 \begin{align*}
 \tilde{\c P}^{ \gamma \sigma}((x_1,\ldots, x_m)\omega)& = \tilde{\c P}((x_1^{y^{-1}},\ldots, x_m^{y^{-1}})\omega)^\sigma\\[1.7mm]
 &=(\c P^{y\sigma}(x_1)\otimes \cdots \otimes \c P^{y\sigma}(x_m))\c X _{\theta(1)} (\omega)\\
 &\sim \prod_{i=1}^m \mu_{y \sigma}(x_i) \tilde{\c P}((x_1,\ldots,x_m)\omega), 
 \end{align*}
 where the permutation representation $\c X_{\theta(1)}$ is as in Theorem 10.21 of \cite{Nav18}. 
 We have used that 
 $\c P ^{y\sigma}=\mu_{y \sigma}M \c P M^{-1}$ 
 and 
 $(M\otimes \cdots \otimes M)\c X _{\theta(1)}(\omega)=\c X_{\theta(1)}(\omega)(M\otimes \cdots \otimes M)$. 

Similarly one can check
 that for every $(x_1,\ldots, x_m)\omega \in H_\theta\wr {\sf S}_m$
 $$(\tilde{ \mu'}_{ \gamma \sigma})((x_1, \ldots, x_m)\omega)=\prod_{i=1}^m({\mu'}_{y\sigma})(x_i).$$ 
 Since ${\mu'}_{y\sigma}$ is the restriction of 
 $\mu_{y\sigma}$ the proof is finished. 
\end{proof}

 The following is a special feature of $\c H$-triples with respect to wreath products.
 \begin{thm}\label{wreath2}
 Let $(G, N, \theta)_{\c H}$ and $(H, M, \varphi)_{\c H}$ be $\c H$-triples such that 
 $(G, N, \theta)_{\c H}\geq_c (H, M, \varphi)_{\c H}$. 
 Let $k,m \in \ZZ_{>0}$. Let $\sigma_i \in \c H$ for $i=1, \ldots, k$. 
 Write $\theta_i=\theta^{\sigma_i}$ and $\varphi_i=\varphi^{\sigma_i}$. 
 Suppose that $\theta_i$ and $\theta_j$ are not $G$-conjugate 
 whenever $i\neq j$. Then for $n=mk$
 $$((G\wr {\sf S}_n)_{\tilde \theta^{\c H}}, N^n, \tilde \theta)_{\c H}\geq_c 
 ((H\wr {\sf S}_n)_{\tilde \varphi^{\c H}}, M^n, \tilde\varphi)_{\c H},$$ 
 where $\tilde \theta=\theta_1^m \times \cdots \times \theta_k^m$ and 
 $\tilde \varphi =\varphi_1^m\times \cdots \times \varphi_k^m$.
 \end{thm}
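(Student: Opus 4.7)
The plan is to mimic and extend the proof of Lemma \ref{wreath}, treating $\tilde\theta$ as a collection of Galois-twisted blocks and carefully tracking the interplay between the block-permuting elements of ${\sf S}_n$ and the Galois twists $\sigma_i$.

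I would first reduce to a direct-product $\c H$-triple inequality at the block-diagonal level. Applying Lemma \ref{Hconjugate} with $\sigma_i \in \c H$ yields $(G,N,\theta_i)_{\c H} \geq_c (H,M,\varphi_i)_{\c H}$ for each $i$; applying Lemma \ref{wreath} to each gives
$((G_\theta \wr {\sf S}_m)\Delta^m G, N^m, \theta_i^m)_{\c H} \geq_c ((H_\theta \wr {\sf S}_m)\Delta^m H, M^m, \varphi_i^m)_{\c H}$;
and iterating Lemma \ref{direct} produces an $\c H$-triple inequality inside $G^n \rtimes {\sf S}_m^k \leq G \wr {\sf S}_n$ involving the characters $\tilde\theta$ and $\tilde\varphi$.

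Next I would determine the full $\c H$-orbit stabilizer $(G\wr {\sf S}_n)_{\tilde\theta^{\c H}}$ and extend the above projective representations to it. Because the $\theta_i$ are pairwise non-$G$-conjugate but all $\c H$-conjugate to $\theta$, any element $((g_1,\ldots,g_n),\omega)$ fixing $\tilde\theta^{\c H}$ must have $\omega \in {\sf S}_m \wr {\sf S}_k$, and the ${\sf S}_k$-component $\pi$ is constrained by the requirement that some $\sigma \in \c H$ satisfies $\sigma_i^{-1}\sigma_{\pi(i)}\sigma^{-1} \in \c H_{G,\theta}$ simultaneously for all $i$. I would then extend the block-diagonal projective representations by defining their values on a block-permuting element via a block-level permutation representation (in the spirit of $\c X_{\theta(1)}$ from Theorem 10.21 of \cite{Nav18}), twisted by the matrices relating $\c P^{\sigma_i}$ to $\c P^{\sigma_{\pi(i)}\sigma}$ that arise from Lemma \ref{mu}.

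Finally, I would verify conditions (i)--(iv) of Definition \ref{Hiso}. Conditions (i), (ii) and (iii) follow routinely from the block-wise construction, the stabilizer analysis, and the fact that $(\c P, \c P')$ satisfies the corresponding properties. The main obstacle will be condition (iv): by Lemma \ref{cor_trans} one reduces to a transversal of $H_{\tilde\theta}$ in $((H \wr {\sf S}_n)_{\tilde\theta^{\c H}} \times \c H)_{\tilde\theta}$, which I would split into (a) block-internal elements handled by Lemma \ref{wreath}, (b) pure Galois elements handled by construction, and (c) block-permuting representatives. Case (c) is the delicate one: one must compute the function $\tilde\mu_a$ associated to a block-permutation combined with the appropriate Galois twist and check, using Lemma \ref{conjugationstabilizer}(b) together with the compatibility $\mu'_{h\sigma} = (\mu_{h\sigma})_{H_\theta}$ built into the input $\c H$-triple pair, that its restriction to $H_{\tilde\theta}$ equals $\tilde\mu'_a$. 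Lemma \ref{replace} can be invoked to normalize $\c P$ and $\c P'$ so that the scalar matrices witnessing the $\sigma_i$-twists restrict correctly between $G_\theta$ and $H_\theta$.
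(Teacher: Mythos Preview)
Your overall strategy---building the projective representations from Galois-twisted copies of the Lemma \ref{wreath} construction and then verifying condition (iv) on block-permuting representatives via Lemma \ref{cor_trans}---is essentially the paper's approach. However, there is a conceptual misstep in your second paragraph that would lead you into unnecessary and possibly ill-defined work.

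You propose to \emph{extend} the block-diagonal projective representations to the full $\c H$-orbit stabilizer $(G\wr {\sf S}_n)_{\tilde\theta^{\c H}}$. This is not required: Definition \ref{Hiso}(iii) asks for $\c P$ on the actual stabilizer $G_{\tilde\theta}$, not on the $\c H$-orbit stabilizer. The non-$G$-conjugacy hypothesis on the $\theta_i$ forces $(G\wr {\sf S}_n)_{\tilde\theta}=(G_\theta\wr {\sf S}_m)^k$, so the tensor product $\tilde{\c P}^{\sigma_1}\otimes\cdots\otimes\tilde{\c P}^{\sigma_k}$ of the Lemma \ref{wreath} representations already lives on the correct domain with no extension needed. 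Your proposed extension---via a block-level permutation representation ``twisted by the matrices relating $\c P^{\sigma_i}$ to $\c P^{\sigma_{\pi(i)}\sigma}$''---is problematic because those intertwining matrices depend on an auxiliary choice of $\sigma\in\c H$ that varies with the element, and it is not clear this can be made into a coherent projective representation associated with $\tilde\theta$ (indeed, block-permuting elements of the $\c H$-orbit stabilizer do not fix $\tilde\theta$).

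For condition (iv) the paper proceeds exactly as you should: an element $a=(\gamma,\tau)\in((H\wr{\sf S}_n)_{\tilde\varphi^{\c H}}\times\c H)_{\tilde\theta}$ \emph{normalizes} $(G_\theta\wr{\sf S}_m)^k$, so the conjugate $(\tilde{\c P}^{\sigma_1}\otimes\cdots\otimes\tilde{\c P}^{\sigma_k})^{\gamma\tau}$ makes sense without any extension. One writes $\gamma=x\gamma'$ with $x$ in the stabilizer and $\gamma'=(b_1,\ldots,b_k)\pi$ a pure block-permuting element (each $b_l=\Delta^m c_l$ with $\theta^{c_l\sigma_l\tau\sigma_{\pi(l)}^{-1}}=\theta$), reduces via Lemma \ref{cor_trans} to $\gamma=\gamma'$, and then computes the conjugate directly: the block permutation is absorbed by conjugation by a permutation matrix $\c X(\pi)$, and what remains factors through the functions $\tilde\mu_{b_l\sigma_l\tau\sigma_{\pi(l)}^{-1}}$ coming from the Lemma \ref{wreath} relation. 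Your three-case split (a)/(b)/(c) and the appeal to Lemma \ref{replace} are superfluous once you drop the extension step.
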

 \begin{proof}
 The statement makes sense since $G\wr {\sf S}_n=N^n(H\wr {\sf S}_n)$, 
 $N^n\cap (H\wr {\sf S}_n)=M^n$ and 
 $\cent {G\wr {\sf S}_n}{N^n}\sbs \cent G N ^n \sbs H^n \sbs H\wr {\sf S}_n$.  
 Moreover, we see next 
 that $(H\wr {\sf S}_n\times \c H)_{\tilde \theta}=(H\wr {\sf S}_n\times \c H)_{\tilde \varphi}$. 
 Write 
 $\tilde \theta=\beta_1\times \cdots \times\beta_n$ and 
 $\tilde \varphi=\xi_1\times \cdots \times \xi_n$.
  We know that each $\beta_i$ is $\theta^{\tau_i}$ for some 
  $\tau_i \in \c H$ and then $\xi_i=\varphi^{\tau_i}$. 
  Let $a\in (H\wr {\sf S}_n\times \c H)_{\tilde \theta}$. 
  Hence $a=(\gamma, \tau)$, where 
  $\gamma=(a_1, \ldots, a_n)\omega\in H\wr {\sf S}_n$ and 
  $\tau \in \c H$. 
  The equality $\tilde \theta^ a =\tilde \theta$ implies that 
  $\beta_{\omega^{-1}(i)}^{a_{\omega^{-1}(i)}\tau}=\beta_i$ 
  for every $i=1, \ldots, n$. This is exactly the same as
 $$\theta^{a_j \tau_j \tau \tau_{\omega(j)}^{-1}}=\theta,$$
 for every $j=1, \ldots, n$. 
 Write $c_j=a_j \tau_j \tau \tau_{\omega(j)}^{-1}\in (H\times \c H)_\theta=(H\times \c H)_\varphi$. 
 Then $\varphi^{c_j}=\varphi$ for every $j=1, \ldots, n$ implies $\tilde \varphi^ a=\tilde \varphi$. 
 The above discussion shows that conditions (i) and (ii) of Definition \ref{Hiso} are satisfied 
 by the $\c H$-triples $((G\wr {\sf S}_n)_{\tilde \theta^{\c H}}, N^n, \tilde \theta)_{\c H}$
 and  $((H\wr {\sf S}_n)_{\tilde \varphi^{\c H}}, M^n, \tilde\varphi)_{\c H}$.

 \smallskip
 
 Next we explain how to construct projective representations 
 giving the relation between the afore-mentioned $\c H$-triples. 
 Let $(\c P, \c P')$ be associated with $(G, N, \theta)_{\c H}\geq_c (H, M, \varphi)_{\c H}$. 
 As in Lemma \ref{wreath} we can construct projective representations $\tilde{\c P}$ and $\tilde{\c P '}$ associated with 
 \begin{equation}\label{eq_wreath}
 ( (G_\theta \wr {\sf S}_m)\Delta^m G, N^m, \psi)_{\c H}\geq_c ((H_\theta \wr {\sf S}_m)\Delta^m H, M^m, \xi)_{\c H},
 \end{equation}
 where $\psi=\theta^m$ and $\xi=\varphi^m$. Write $\tilde{\c P_i}=(\tilde{\c P})^{\sigma_i}$ 
 and $\tilde{\c P'_i}=(\tilde{\c P'})^{\sigma_i}$ for $i=1, \ldots, k$.
 In particular, each $(\tilde{\c P_i}, \tilde{\c P'_i})$ gives
 $$( G_\theta \wr {\sf S}_m, N^m, \psi_i)\geq_c (H_\theta \wr {\sf S}_m, M^m, \xi_i),$$
 where $\psi_i=\psi^{\sigma_i}=\theta_i^m$ and $\xi_i=\xi^{\sigma_i}=\varphi_i^m$.
 The pair $(\tilde{\c P}, \tilde{\c P'})$ of tensor product representations
 $$\tilde{\c P}=\tilde {\c P_1}\otimes \cdots \otimes \tilde{\c P_k} \text{ \ and \ } \tilde{\c P'}=\tilde {\c P'_1}\otimes \cdots \otimes \tilde{\c P'_k}$$
 gives
 $$( (G_\theta \wr {\sf S}_m)^k, N^n, \tilde \theta)\geq_c ((H_\theta \wr {\sf S}_m)^k, M^n, \tilde \varphi).$$
 Notice that $(G_\theta \wr {\sf S}_m)^k=(G\wr {\sf S}_n)_{\tilde \theta}$ and 
 $(H_\theta \wr {\sf S}_m)^k=(H\wr {\sf S}_n)_{\tilde \varphi}$. 
 This is because $\theta_i$ and $\theta_j$ are not $G$-conjugate whenever $i\neq j$. 
 Notice that we have constructed $\tilde{ \c P}$ and $\tilde {\c P'}$ as in
 Definition \ref{Hiso}(iii). 
 
 \smallskip
 
It only remains to check condition (iv) of Definition \ref{Hiso}. 
As before write $\tilde \theta=\beta_1\times \cdots \times\beta_n$. Note that
 $\beta_i=\theta_j$ whenever $i \in  \Lambda_j=\{(j-1)m+1, \ldots, jm\}$, for $j=1, \ldots, k$. 

\smallskip

Let $a\in (H\wr {\sf S}_n\times \c H)_{\tilde \theta}$. 
Hence $a=(\gamma, \tau)$, where $\gamma=(a_1, \ldots, a_n)\omega\in (H\wr {\sf S}_n)_{\tilde \theta^{\c H}}$ and 
$\tau \in \c H$. The equality $\tilde \theta^ a=\tilde \theta$ is equivalent to
$$\beta_{\omega^{-1}(i)}^{a_{\omega^{-1}(i)}\tau}=\beta_i$$
for every $i=1, \ldots, n$. In particular
$$\beta_{\omega^{-1}(i)}^{a_{\omega^{-1}(i)}\tau}=\theta_j \text{ \ \ whenever \ \ } i \in \Lambda_j.$$
Hence $\omega^{-1}(\Lambda_j)=\Lambda_l$ for some $l \in \{ 1, \ldots, k\}$, and 
$\theta^{\sigma_l a_i \tau}=\theta_i^{a_i\tau}=\theta_j=\theta^{\sigma_j}$ 
for every $i \in \Lambda_l$. 
In particular $\sigma_l\tau\sigma_j^{-1}\in \c H _{H, \theta}$. 
Fix for each  $l \in \{ 1, \ldots, k\}$ an element $c_l\in H$ 
such that $\theta^{c_l}=\theta^{\sigma_l\tau\sigma_j^{-1}}$. 
Hence $a_i=a_i'c_l$ for some $a_i'\in H_\theta$ for every 
$i \in \Lambda_l$ and for $l=1, \ldots, k$. Write $b_l=\Delta^mc_l$ for each $l$.

Define $\pi \in {\sf S}_n$ by $\pi((l-1)m+i)=(j-1)m+i$ if 
$\omega(\Lambda_l)=\Lambda_j$ for every $i=1, \ldots, m$. 
Hence $\pi(l)=j$ if $\omega(\Lambda_l)=\Lambda_j$, and in this way
we can view $\pi \in {\sf S}_k$. 
For $j =1, \ldots, k$,
define $\pi_j \in {\sf S}_n$ by $\pi_j|_{\Lambda_j}=\omega\pi^{-1}|_{\Lambda_j}$ 
and fixing $\{ 1, \ldots, n\}\setminus \Lambda_j$. By definition $\omega=\pi_1\cdots\pi_k \pi$. 

\smallskip 

Then $\gamma=x \gamma'$, where 
$x=(a'_1, \ldots, a'_n)\pi_1\cdots \pi_k \in (H_\theta \wr {\sf S}_m)^k$ and 
$\gamma' =(b_1, \ldots, b_k)\pi \in (H\wr {\sf S}_n)_{\tilde \theta^{\c H}}$ 
(this is because $(b_1, \ldots, b_k)$ and $\pi_1\cdots \pi_k$ commute). 
By Lemma \ref{cor_trans}, in order to verify condition (iv) of Definition 
\ref{Hiso} for $\gamma$ we may assume that $\gamma=\gamma'$.

With the above assumptions and notation, 
we have that $\psi^{b_l\sigma_l\tau\sigma_{\pi(l)}^{-1}}=\psi$ for all $l$. 
Note that $\gamma^{-1}=(b_{\pi^{-1}(1)}^{-1}, \ldots, b_{\pi^{-1}(k)}^{-1})\pi^{-1}$. 
Let $(y_1, \ldots, y_k)\in (H_\theta\wr{\sf S}_m)^k$. Then we have
\begin{align*}
(\tilde{\c P_1}\otimes \cdots \otimes\tilde{\c P_k})^{\gamma\tau}(y_1, \ldots, y_k)&=\tilde{\c P}^{\sigma_1\tau}\otimes \cdots \otimes\tilde{\c P}^{\sigma_k \tau}(y_{\pi(1)}^{ b_1^{-1}}, \ldots, y_{\pi(k)}^{ b_k^{-1}})\\[1.5mm]
 &=\tilde{\c P}^{\sigma_1\tau b_1}(y_{\pi(1)})\otimes \cdots \otimes  \tilde{\c P}^{\sigma_k\tau b_k}(y_{\pi(k)})\\[1.5mm]
 &=(\tilde{\c P}^{b _1\sigma_1\tau\sigma_{\pi(1)}^{-1}}(y_{\pi(1)}))^{\sigma_{\pi(1)}}\otimes \cdots \otimes  (\tilde{\c P}^{b_k\sigma_k\tau\sigma_{\pi(k)}^{-1}}(y_{\pi(k)}))^{\sigma_{\pi(k)}}.
\end{align*}
Write $\tau_l=\sigma_l \tau \sigma_{\pi(l)}^{-1}$. For each $l\in \{1, \ldots, k\}$, 
we have that $\psi^{b_l \tau_l}=\psi$. 
By Lemma \ref{mu}, we have functions $\tilde \mu_{b_l \tau_l }$ 
such that 
\begin{equation}\label{similarity}(\tilde{\c P})^{ b_l\tau_l}\sim \tilde \mu_{ b _l\tau_l}\tilde{\c P}.\end{equation}
Write
$$\tilde \mu_{\gamma \tau} (y_1, \ldots, y_k):=\prod_{l=1}^k \tilde \mu _{ b _l \tau _l}(y_{\pi(l)})^{\sigma_{\pi(l)}}.$$
Then
\begin{align*}
(\tilde{\c P}^{\sigma_1}\otimes \cdots \otimes\tilde{\c P}^{\sigma_k})^{\gamma\tau}(y_1, \ldots, y_k)&\sim \tilde \mu_{\gamma \tau} (y_1, \ldots, y_k) \tilde{\c P}^{\sigma_{\pi(1)}}(y_{\pi(1)})\otimes \cdots \otimes  \tilde{\c P}^{\sigma_{\pi(k)}}(y_{\pi(k)})\\[1.5mm]
&\sim \tilde \mu _{\gamma\tau}(y_1, \ldots, y_k) \tilde{\c P}^{\sigma_1}\otimes \cdots \otimes  \tilde{\c P}^{\sigma_k}(y_1, \ldots, y_k)\, ,
\end{align*}
where the first similarity relation follows from the ones in Equation (\ref{similarity}), and the second similarity
relation is obtained by conjugating by the matrix
$\c X (\pi)$ associated with the action of $\pi$ on the tensors
$$v_1\otimes \cdots \otimes v_k \mapsto v_{\pi^{-1}(1)}\otimes \cdots \otimes v_{\pi^{-1}(k)}.$$
We have analogous relations for $\tilde{\c P'_1}\otimes \cdots \otimes\tilde{\c P'_k}$ with 
$$\tilde{ \mu'}_{\gamma \tau} (y_1, \ldots, y_k):=\prod_{l=1}^k \tilde {\mu'} _{ b _l \tau _l}(y_{\pi(l)})^{\sigma_{\pi(l)}}.$$
 By Equation (\ref{eq_wreath}) (in the second paragraph of this proof) each 
 $\tilde{ \mu'} _{\tilde b _j \tau _j}$ is the restriction of 
 $\tilde \mu _{\tilde b _j \tau _j}$, and hence the result follows.
 \end{proof}
 
 We will need to control the character theory and $\c H$-action over 
 some characters of central products. 
Suppose that $K$ is the product of two subgroups $N$ and $Z$ with $N\nor K$ and $Z \leq \cent K N$.
Then $K$ is the central product of $N$ and $Z$. 
In this case 
$$\irr K =\dot{ \bigcup_{\nu \in \irr{Z\cap N}}} \irr{K|\nu},$$
where $\irr{K|\nu}=\{ \theta\cdot \lambda \ | \ \theta \in \irr{N|\nu} \text{ and } \lambda \in \irr{Z|\nu} \}$. 
Note that, whenever a group $A$ acts by automorphisms on $K$,
  stabilizing $N$ and $Z$, if $a \in A$ and 
$\theta\cdot \lambda \in \irr K$, then 
$(\theta \cdot \lambda)^a=\theta\cdot \lambda$ if, and only if,
$\theta^ a=\theta$ and $\lambda^a=\lambda$. The same happens if $A \le \c G={\rm Gal}(\Q^{\rm ab}/\Q)$.

\begin{thm}\label{dotproduct}  Let $(G, N, \theta)_{\c H}$ and 
$(H, M, \varphi)_{\c H}$ be $\c H$-triples such that 
$$(G, N, \theta)_{\c H}\geq_c (H, M, \varphi)_{\c H}\, .$$
Suppose that $Z\nor G$ is abelian and satisfies  $Z\sbs \cent G N$.
Let $\nu \in \irr{Z\cap N}$ be under $\theta$ and $\lambda \in \irr {Z|\nu}$. Then
$$(G_{(\theta\cdot \lambda)^\c H}, NZ, \theta\cdot \lambda)_{\c H}\geq_c (H_{(\varphi\cdot \lambda)^\c H}, MZ, \varphi\cdot \lambda)_{\c H}.$$
In particular,   
there exists an $\c H$-equivariant bijection
$$\tau_\theta \colon \irr{G_{(\theta\cdot \lambda)^{\c H}}| (\theta\cdot\lambda)^{\c H}}\rightarrow \irr{H_{(\varphi\cdot \lambda)^{\c H}}|(\varphi\cdot\lambda}^{\c H})$$
that preserves ratios of character degrees. 
\end{thm}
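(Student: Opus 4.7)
The plan is to exhibit projective representations $\hat{\c P}$ of $G_{\theta\cdot\lambda}$ and $\hat{\c P'}$ of $H_{\varphi\cdot\lambda}$ giving the claimed ordering of $\c H$-triples, obtained by first normalizing a pair $(\c P,\c P')$ that gives the hypothesized ordering $(G,N,\theta)_{\c H}\geq_c(H,M,\varphi)_{\c H}$, and then restricting. Since $Z\subseteq\cent G N$, Schur's lemma applied to $\c P|_N$ gives $\c P(z)=\zeta_z I$ for every $z\in Z$, and the same scalar gives $\c P'(z)$ by condition (iii) of the hypothesis. The ``in particular'' statement will then follow at once from Corollary \ref{corHequivariant}.

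The key preliminary step is to apply Lemma \ref{replace} with a function $\epsilon\colon G_\theta\to U$ that is constant on $N$-cosets, satisfies $\epsilon(1)=1$, and takes the value $\lambda(z)\zeta_z^{-1}$ on each $z\in Z$. After replacing $(\c P,\c P')$ by $(\epsilon\c P,\epsilon|_{H_\theta}\c P')$, the new $\c P$ satisfies $\c P(z)=\lambda(z)I$, and likewise for $\c P'$ (note $Z\subseteq H_\theta$ since $Z\subseteq\cent G N\subseteq H$ and $Z$ fixes $\theta$). Well-definedness of $\epsilon$ on $(NZ)\cap G_\theta$ reduces to the identity $\lambda(wz)\zeta_{wz}^{-1}=\lambda(z)\zeta_z^{-1}$ for $w\in N\cap Z$ and $z\in Z$; this follows from $\lambda|_{N\cap Z}=\nu$ together with $\alpha(w,z)=1$, which holds because $w\in N$ and $\alpha$ is trivial on $N\times G$ by the definition of an associated projective representation.

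Next I will verify the group-theoretic conditions (i) and (ii) of Definition \ref{Hiso} for the new triples. Writing $G_1=G_{(\theta\cdot\lambda)^{\c H}}$ and $H_1=H_{(\varphi\cdot\lambda)^{\c H}}$, condition (i) of the original ordering gives $Z\subseteq H$; combined with $N\subseteq G_{\theta\cdot\lambda}$ (true because $N$ fixes $\theta$ and centralizes $Z$), routine manipulations yield $H_1=G_1\cap H$, $G_1=(NZ)H_1$, $NZ\cap H_1=MZ$, and $\cent{G_1}{NZ}\subseteq H_1$. For (ii), the observation that $(\theta\cdot\lambda)^{h\sigma}=\theta\cdot\lambda$ decouples into $\theta^{h\sigma}=\theta$ and $\lambda^{h\sigma}=\lambda$, and similarly for $\varphi\cdot\lambda$, together with the hypothesis $(H\times\c H)_\theta=(H\times\c H)_\varphi$, yields $(H_1\times\c H)_{\theta\cdot\lambda}=(H_1\times\c H)_{\varphi\cdot\lambda}$.

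Finally, setting $\hat{\c P}=\c P|_{G_{\theta\cdot\lambda}}$ and $\hat{\c P'}=\c P'|_{H_{\varphi\cdot\lambda}}$, I must check that $\hat{\c P}$ is associated with $\theta\cdot\lambda$. Multiplicativity on $NZ\times NZ$ follows from $\c P(nz)=\lambda(z)\c P(n)$ and the vanishing of $\alpha$ on $N\times\cdot$, $\cdot\times N$, and on $Z\times Z$ (the last because $\c P$ on $Z$ is now scalar via the multiplicative character $\lambda$). For arbitrary $g\in G_{\theta\cdot\lambda}$ the identities $\alpha(nz,g)=\alpha(g,nz)=1$ rely on the key observation that $(nz)g=(ng)z^g$ with $z^g\in Z$ and $\lambda(z^g)=\lambda(z)$ because $g\in G_\lambda$. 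Conditions (iii) and (iv) of Definition \ref{Hiso} now descend by restriction: the factor sets and central scalars on $\cent{G_1}{NZ}\subseteq\cent G N$ are inherited, and by uniqueness in Lemma \ref{mu} the new functions satisfy $\hat{\mu}_a=\mu_a|_{G_{\theta\cdot\lambda}}$ and $\hat{\mu'}_a=\mu'_a|_{H_{\theta\cdot\lambda}}$, so $\hat{\mu}_a|_{H_{\theta\cdot\lambda}}=\hat{\mu'}_a$ descends from $\mu_a|_{H_\theta}=\mu'_a$. I expect the normalization step to be the main obstacle: it requires the factor-set vanishing on $N\times Z$ and the compatibility $\lambda|_{N\cap Z}=\nu$ to conspire to well-definedness of $\epsilon$, after which the remaining verifications are careful but routine bookkeeping.
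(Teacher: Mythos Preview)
Your proposal is correct and follows essentially the same route as the paper: normalize $(\c P,\c P')$ via Lemma \ref{replace} with a function $\epsilon$ sending $z\in Z$ to $\lambda(z)/\zeta_z$, then restrict to $G_{\theta\cdot\lambda}$ and $H_{\varphi\cdot\lambda}$; the paper verifies conditions (i)--(iv) with the same ingredients and likewise finishes with Theorem \ref{Hequivariant}. One remark: your justification that the new factor set vanishes on $NZ\times G_{\theta\cdot\lambda}$ via the rewriting $(nz)g=(ng)z^g$ only yields $\beta(nz,g)=\beta(ng,z^g)$, which is circular on its own; the cleanest way to close this is to observe that any two admissible extensions of $\epsilon$ from $NZ$ to $G_\theta$ differ by a function constant on $NZ$-cosets, so it suffices to verify the vanishing for one explicit choice (e.g.\ via a transversal of $NZ$ in $G_{\theta\cdot\lambda}$), and the paper is equally brief at this point.
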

\begin{proof} Note that  $Z\sbs \cent G N \sbs H$. Hence $Z\cap N=Z \cap M$. 
Since $(G, N, \theta)_{\c H}\geq_c (H, M, \varphi)_{\c H}$, we have that $\varphi$ lies over $\nu$.
 
Note that $(\theta \cdot \lambda)^{g\sigma}=\theta \cdot \lambda$
if, and only if, $\theta^{g\sigma}=\theta$ and $\lambda^{g\sigma}=\lambda$, for $g\in G$ and $\sigma \in \c H$.
Hence, $G_{(\theta\cdot \lambda)^\c H} \cap H=H_{(\varphi\cdot \lambda)^\c H}$,
and Definition \ref{Hiso}(i) is easily checked. Since $(H\times \c H)_\theta=(H\times \c H)_\varphi$,
we have that $(H_{(\varphi\cdot \lambda)^\c H}\times \c H)_\theta=(H_{(\varphi\cdot \lambda)^\c H}\times \c H)_\varphi$, so 
Definition \ref{Hiso}(ii) holds.

Since $G_{(\theta\cdot \lambda)}=
G_\theta \cap G_\lambda \sbs G_\theta$
and $H_{\theta\cdot \lambda}=G_{\theta\cdot \lambda}=H_{\varphi \cdot \lambda}$, 
if $(\c P, \c P')$ gives
$$(G, N, \theta)_{\c H}\geq_c (H, M, \varphi)_{\c H},$$
then $(\c P_{G_{\theta \cdot \lambda}}, \c P'_{H_{\theta \cdot \lambda}})$ gives
$$(G_{(\theta \cdot \lambda)^{\c H}}, N, \theta)_{\c H}\geq_c (H_{(\theta \cdot \lambda)^{\c H}}, M, \varphi)_{\c H}.$$
To ease the notation we assume $G=G_{(\theta \cdot \lambda)^{\c H}}$, so that $(G, NZ, \theta \cdot \lambda)_{\c H}$ and $(H, MZ, \varphi \cdot \lambda)_{\c H}$ are $\c H$-triples.

\smallskip

We next show how to construct a pair of projective representations giving 
$$(G, NZ, \theta\cdot \lambda)_{\c H}\geq_c (H, MZ, \varphi\cdot \lambda)_{\c H}$$
from $(\c P, \c P')$. Notice that $\c P(c)=\zeta_c I_{\theta(1)}$ and $\c P'(c)=\zeta_c I_{\varphi(1)}$ for every $c \in \cent G N$. 
Morever, $\zeta_z=\nu(z)$ for every $z \in Z \cap N$.

By 
Corollary \ref{projectivecyclotomic}, we have that $\zeta_c \in U$ for every $c \in \cent G N$, with $U$ being 
 the subgroup of $\C^\times$ of roots of unity.
Let $\epsilon \colon G_\theta \rightarrow U$ be a function constant on $N$-cosets and such that
$$\epsilon(z)=\frac{\lambda(z)}{\zeta_z}\in U \, ,$$
for every $z \in Z$. Such $\epsilon$ does exist as $\lambda(z)=\nu(z)=\zeta_z$ for every $z \in Z \cap N$. In particular, $\epsilon (1)=1$. 
By Lemma \ref{replace},  the pair $(\epsilon \c P, \epsilon_{H_\theta} \c P')$ gives 
$(G, N, \theta)_{\c H}\geq_c (H, M, \varphi)_{\c H}.$
Note that $(\epsilon \c P)_{NZ}$ affords $\theta \cdot \lambda$ and $(\epsilon_{H_\theta} \c P')_{MZ}$ affords $\varphi \cdot \lambda$. 
In particular,  $(\epsilon_{G_{\theta \cdot \lambda}} \c P, \epsilon_{H_{\theta \cdot \lambda}} \c P')$ gives 
$$(G, NZ, \theta\cdot \lambda)_{\c H}\geq_c (H, MZ, \varphi\cdot \lambda)_{\c H}\, .$$
To finish the proof apply Theorem \ref{Hequivariant}. 
\end{proof}

The following is an $\c H$-triple version of the most important result 
concerning the application of the theory 
of centrally isomorphic character triples to reduction theorems:
the ordering of two $\c H$-triples {\em only} depends on the automorphisms of 
the normal subgroup defined via conjugation by the overgroup,
see the {\em butterfly} theorem (Theorem 5.3 in  \cite{S}).

\begin{thm}\label{butterfly}
Let $(G, N, \theta)_{\c H}$ and $(H, M, \varphi)_{\c H}$ be 
$\c H$-triples such that $(G, N, \theta)_{\c H}\geq_c (H, M, \varphi)_{\c H}$. 
Let $\epsilon \colon G \rightarrow \aut N$ be the group homomorphism 
defined by conjugation by $G$. Suppose that $N\nor \hat G$ and 
$\hat \epsilon(\hat G)=\epsilon(G)$, where $\hat \epsilon \colon \hat G \rightarrow \aut N$ 
is given by conjugation by $\hat G$. Let $N \cent{\hat G} N\sbs \hat H\leq \hat G$ 
be such that $\hat \epsilon(\hat H)=\epsilon(H)$. Then 
$$(\hat G, N, \theta)_{\c H}\geq_c (\hat H, M, \varphi)_{\c H}.$$
\end{thm}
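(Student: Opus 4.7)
The plan is to adapt the classical butterfly theorem (Theorem~5.3 of \cite{S}) to the $\c H$-triple setting by constructing the required projective representations via a fiber-product, and then to verify that the new Galois-compatibility condition (iv) of Definition~\ref{Hiso} follows from the one we started with. Throughout, fix a pair $(\c P, \c P')$ giving $(G,N,\theta)_{\c H}\ge_c(H,M,\varphi)_{\c H}$.

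First I would verify the group-theoretic conditions (i) and (ii). Because stabilizers of characters of $N$ depend only on the induced automorphisms, and $\hat\epsilon(\hat G)=\epsilon(G)$, $\hat\epsilon(\hat H)=\epsilon(H)$, we obtain $\hat G_{\theta^{\c H}}=\hat G$, $\hat H_{\varphi^{\c H}}=\hat H$ and the equalities $\hat G=N\hat H$, $\cent{\hat G}N\sbs\hat H$, $N\cap\hat H=M$ formally from the hypothesis $N\cent{\hat G}N\sbs\hat H$ and the corresponding properties of $G, H$. Likewise $(\hat H\times\c H)_\theta=(\hat H\times\c H)_\varphi$ because the $\hat H$-action on $\irr N$ factors through $\hat\epsilon(\hat H)=\epsilon(H)$, and $(H\times\c H)_\theta=(H\times\c H)_\varphi$ by assumption.

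The main construction is the fiber product $D=\{(g,\hat g)\in G\times\hat G\,:\,\epsilon(g)=\hat\epsilon(\hat g)\}$, with surjective projections $\pi_1\colon D\to G$ and $\pi_2\colon D\to\hat G$ whose kernels $\cent G N\times\{1\}$ and $\{1\}\times\cent{\hat G}N$ each centralize the diagonally embedded copy $\Delta N\nor D$. Pulling back via $\pi_1$ gives a projective representation $\c Q=\c P\circ\pi_1$ of $\pi_1^{-1}(G_\theta)=\pi_2^{-1}(\hat G_\theta)$ associated with $\theta$ (viewed on $\Delta N$). By Schur's lemma and Corollary~\ref{projectivecyclotomic}, $\c Q$ is a root-of-unity scalar $\zeta_c$ on $\ker\pi_2\sbs\cent D{\Delta N}$. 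Using Lemma~\ref{replace}, I would multiply $\c Q$ by a function $\delta$ valued in the roots of unity of $\C$, constant on $\Delta N$-cosets with $\delta(1)=1$, so that $\delta\c Q$ is trivial on $\ker\pi_2$; it then descends to a projective representation $\hat{\c P}$ of $\hat G_\theta$ associated with $\theta$. The analogous construction applied to $\c P'$ via the fiber product of $H$ and $\hat H$ over $\epsilon(H)=\hat\epsilon(\hat H)$, using the \emph{same} modification values on the common centralizer $\cent G N\sbs H$, produces $\hat{\c P'}$ with matching central scalars.

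Condition (iii) is then automatic: both $\hat{\c P}$ and $\hat{\c P'}$ have entries in $\Q^{\rm ab}$, their factor sets are the pullbacks of $\alpha$ and $\alpha'$ modulo the coboundary of $\delta$ and $\delta'$, hence root-of-unity valued with $\alpha'$-restriction matching $\alpha$ on $\hat H_\theta\times\hat H_\theta$, and the scalar values on $\cent{\hat G}N$ agree by construction. The crux is condition (iv). For $(h,\sigma)\in(\hat H\times\c H)_\theta$, I would choose a lift $(h_0,h)\in D$ with $h_0\in H$, and trace through the construction: the function $\hat\mu_{h\sigma}$ for $\hat{\c P}$ is, modulo the correction term $\delta^{h\sigma}/\delta$ supplied by Lemma~\ref{replace}, the descent of $\mu_{h_0\sigma}$ to $\hat G_\theta/\ker\pi_2$, and similarly for $\hat{\c P'}$. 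Since $\mu_{h_0\sigma}$ and $\mu'_{h_0\sigma}$ agree on $H_\theta$ by the hypothesis (iv) for $(\c P,\c P')$, and $\delta$ and $\delta'$ were chosen symmetrically (with values in $\Q^{\rm ab}\cap U$ so that the Galois action on them is explicit), the restricted functions agree on $\hat H_\theta$. By Lemma~\ref{cor_trans} it suffices to check this on a transversal of $\hat H_\theta$ in $(\hat H\times\c H)_\theta$.

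The principal obstacle is the last step: one must choose the modification $\delta$ so that it interacts transparently with both the Galois action and with the matching modification $\delta'$ on the $\hat H$-side. This is possible precisely because $\cent G N\sbs H$, so the scalars $\zeta_c$ governing the two modifications are identical there and, being roots of unity by Corollary~\ref{projectivecyclotomic}, are fixed or permuted explicitly by $\c H$. The hypothesis that $\c P$ and $\c P'$ have entries in $\Q^{\rm ab}$ is what makes the entry-wise Galois action on $\hat{\c P}$ and $\hat{\c P'}$ meaningful and keeps the verification of (iv) purely formal once $\delta$ is in place.
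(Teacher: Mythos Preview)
Your fiber-product approach is correct and gives a clean alternative to the paper's argument. The paper does not form $D=G\times_{\epsilon(G)}\hat G$; instead it quotes the construction of Theorem~10.18 of \cite{Nav18} directly: one fixes a transversal $\mathbb T$ of $M\cent GN$ in $H_\varphi$, transports it via the canonical isomorphism $G/\cent GN\cong\hat G/\cent{\hat G}N$ to a transversal $\widehat{\mathbb T}$ of $M\cent{\hat G}N$ in $\hat H_\varphi$, chooses a semi-multiplicative scalar function $\hat\lambda$ on $\cent{\hat G}N$, and sets $\hat{\c P}(\hat t\, m\,\hat c)=\c P(tm)\hat\lambda(\hat c)$ (and similarly for $\hat{\c P'}$). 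Condition~(iv) is then verified by an explicit computation showing that $\hat\mu_{\hat h\sigma}(\hat t m\hat c)$ equals $\mu_{h\sigma}(tm)$ times a correction factor built only from $\lambda$ and $\hat\lambda$, identical for $\hat{\c P}$ and $\hat{\c P'}$. Your section of $\pi_2$ plays exactly the role of the transversal, and your term $\delta^{h\sigma}/\delta$ is that same correction factor, so the two routes coincide once unwound; the fiber-product language has the advantage of making the symmetry between $G$ and $\hat G$ manifest, while the paper's explicit transversal formula makes the check of~(iv) a line-by-line calculation.

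One point to tighten: for a \emph{projective} representation, ``$\delta\c Q$ trivial on $\ker\pi_2$'' by itself does not force descent, because the factor set need not be inflated from $\hat G_\theta$. What actually works is to choose $\delta$ via a section $s\colon\hat G_\theta\to D_\theta$ with $s(n)=(n,n)$ for $n\in N$ and $s(\hat H_\theta)\subseteq H_\theta\times\hat H_\theta$, and then \emph{define} $\hat{\c P}(\hat g)=\c P(\pi_1(s(\hat g)))$; this is precisely the transversal recipe of \cite{Nav18}. Your citation of Lemma~\ref{replace} is slightly off for the descent step itself (that lemma modifies an existing $\ge_c$ pair rather than producing a descended representation), but it does correctly predict how the $\mu$-functions transform under multiplication by $\delta$, which is what you need for~(iv).
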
 
\begin{proof} 
Note that $\cent G N\sbs H$ and $\cent{\hat G} N\sbs \hat H$, 
so the group theory conditions in Definition \ref{Hiso}(i) are satisfie
 by Theorem 10.18
of \cite{Nav18}. 

Recall that the map $\bar\epsilon\colon
G/\cent GN \rightarrow \hat G/\cent{\hat G} N$
given by $\bar\epsilon(\cent GNx)=\cent{\hat G}Ny$ whenever $\epsilon(x)=\hat \epsilon(y)$
defines a group isomorphism. 
Let $x \in G$ and $y \in \hat G$.
If $\epsilon(x)=\hat \epsilon(y)$, notice that 
$\theta^x=\theta^\sigma$
for some  $\sigma \in \c H$
if, and only if, $\theta^{y}=\theta^\sigma$, so condition (ii) of 
Definition \ref{Hiso} also holds.

Following Theorem 10.18 of \cite{Nav18} and given a tranversal 
$\mathbb T$ of $M\cent G N$ in $H_\varphi$ with $1 \in \mathbb T$, 
we can define a transversal $\widehat {\mathbb T}$ of 
$M\cent {\hat G} N$ in $\hat H_\varphi$ with $\hat 1 = 1$. 

Suppose that $(\c P, \c P')$ gives $(G, N, \theta)_{\c H}\geq_c(H, M, \varphi)_{\c H}$
and 
let $\lambda \colon \cent G N \rightarrow \Q^{\rm ab}\setminus\{0\}$ 
be given by the scalar associated with $\c P$ and $\c P'$ for every $c \in \cent G N$ . 
Choose a function $\hat \lambda \colon \cent {\hat G} N \rightarrow \Q^{\rm ab}\setminus\{0\}$ 
semi-multiplicative with respect to $\zent N$ as in Theorem 10.18 of \cite{Nav18}. 
Also following Theorem 10.18 of \cite{Nav18},
we can construct projective representations $\hat{\c P}$ of $\hat G_{\theta}$ 
and $\hat{\c P'}$ of $\hat H_\varphi$ with respect to $\widehat{\mathbb T}$, 
$\hat \lambda$ and $\c P$ and $\c P'$ respectively. 
Then $\hat{\c P}$ is associated with $\theta$, $\hat{\c P'}$ is associated with $\varphi$, 
and they satisfy Definition \ref{Hiso}(iii). 

It remains to check Definition \ref{Hiso}(iv) for $(\hat{\c P}, \hat{\c P'})$. 
We have that $H$ acts {\em by conjugation} 
on the transversal $\mathbb T$ with $t \mapsto t\cdot h$ if, and only if, 
$(t\cdot h)^{-1}t^h=m_h c_h \in M\cent G N$ for $h \in H$. Similarly $\hat H$ acts on 
$\widehat{\mathbb T}$. In fact, from the definition of $\widehat{\mathbb T}$ 
it follows that if $\epsilon(h)=\hat \epsilon(\hat h)$ for $h \in H$, $\hat h \in \hat H$, then 
$$\hat t \cdot \hat h=\widehat{t \cdot h}$$ 
and $(\widehat{t \cdot h})^{-1}\hat {t}^{\hat h}=m_h \hat c_{\hat h}\in M\cent{G_1} N$, for every $t \in \mathbb T$. 

Let $t \in \mathbb T$, $m \in M$ and $\hat c\in \cent {\hat G} N$. Then
\begin{align*}
\hat{\c P}^{\hat h^{-1}\sigma}(\hat t m \hat c)&= \hat{\c P}(\hat t^{\hat h}m^{h} (\hat c)^{\hat h})^\sigma\\ 
&= \hat{\c P} (\widehat{t\cdot h}m_{h} \hat c_{\hat h}m^{h}(\hat c)^{\hat h})^\sigma\\
& = \c P (t\cdot h)^\sigma\c P (m_hm^h)^\sigma \hat \lambda(\hat c_{\hat h} (\hat c)^{\hat h})^\sigma\\
&=\c P (t^hm_h^{-1}c_h^{-1}m_hm^h)^\sigma  \lambda(\hat c_{\hat h} (\hat c)^{\hat h})^\sigma\\
& = \c P ((tm)^h)^\sigma\lambda(c_h^{-1})^\sigma  \hat \lambda(\hat c_{\hat h} (\hat c)^{\hat h})^\sigma \\
&= \c P^{h^{-1}\sigma} (tm)\lambda(c_h^{-1})^\sigma   \hat \lambda(\hat c_{\hat h} (\hat c)^{\hat h})^\sigma \\
&\sim \mu_{h^{-1}\sigma}(tm)  \lambda(c_h^{-1})^\sigma    \hat \lambda(\hat c_{\hat h} (\hat c)^{\hat h})^\sigma\c P(tm)\hat \lambda(\hat c)\hat \lambda(\hat c)^{-1}\\
&= \mu_{h^{-1}\sigma}(tm) \lambda(c_h^{-1})^\sigma    \hat \lambda(\hat c_{\hat h} (\hat c)^{\hat h})^\sigma\lambda(\hat c)^{-1}\hat{\c P} (\widehat tmc')  \\
& = \hat \mu_{\hat h^{-1}\sigma}(\hat t m \hat c)\hat{\c P} (\hat t m \hat c)\,
\end{align*}
where $\hat \mu_{\hat h^{-1}\sigma}(\hat t m \hat c)= \mu_{h^{-1}\sigma}(tm) \lambda(c_h^{-1})^\sigma    \hat \lambda(\hat c_{\hat h} (\hat c)^{\hat h})^\sigma\lambda(\hat c)^{-1}$.
Similarly
 \begin{align*}
 (\hat{\c P'})^{\hat h^{-1}\sigma}(\hat t m \hat c)&\sim \hat \mu'_{\hat h^{-1}\sigma}(\hat t m \hat c)\hat{\c P'} (\hat t m \hat c),
 \end{align*} 
where $\hat \mu'_{\hat h^{-1}\sigma}(\hat t m \hat c)=\mu'_{h^{-1}\sigma}(tm) \lambda(c_h^{-1})^\sigma    \hat \lambda(\hat c_{\hat h} (\hat c)^{\hat h})^\sigma\lambda(\hat c)^{-1}$, so that $\hat \mu_{\hat h^{-1}\sigma}$ and $\hat \mu'_{\hat h ^{-1}\sigma}$ agree on $\hat H_\varphi$
 provided that $\mu_{h^{-1}\sigma}$ and $\mu'_{h^{-1}\sigma}$ agree on $H_\varphi$.
\end{proof}

\section{The inductive Galois--McKay condition}\label{inductivecondition}

We refer the reader to the Appendix B of \cite{Nav18}
for a compendium of the definitions
and results on the theory of universal covering groups 
that are specifically needed in our context.

\smallskip

We can now define the {\bf inductive Galois--McKay condition} 
on finite non-abelian simple groups. 

\begin{defi}\label{inductive}Let $S$ be a finite non-abelian simple group, 
with $p$  dividing  $|S|$. Let $X$ be a universal covering group 
of $S$, $R \in \syl p X$ and $\Gamma =\aut X _R$. We say that 
$S$ satisfies the inductive Galois--McKay condition for $p$ 
if there exist some $\Gamma$-stable proper subgroup $N$ of $X$ with 
$\norm X R \sbs N$ and some $\Gamma\times \c H$-equivariant bijection 
$$\Omega \colon \irrp {X}\rightarrow \irrp{N}\, ,$$ such that 
for every $\theta \in \irrp X$ we have
$$(X\rtimes \Gamma_{\theta^{\c H}}, X, \theta)_{\c H}\geq_c(N\rtimes \Gamma_{\theta^{\c H}}, N, \Omega(\theta))_{\c H}.$$
\end{defi}

We recall that for a quasisimple group $X$ 
(a perfect group whose quotient by its center is simple) 
and any $n \in \Z_{>0}$,
$$\aut{X^n}=\aut X \wr {\sf S}_n \, .$$
Moreover, whenever $R\leq X$ 
$$\aut{X^n}_{R^n}=\aut X_R \wr {\sf S}_n \, .$$
These results appear as Lemma 10.24 of \cite{Nav18}, for example.

\begin{thm} \label{consindcond}
Suppose that $S$ satisfies the inductive Galois--McKay condition 
for $p$, and $X$, $R$, $\Gamma$, $N$ and $\Omega$ are as above. 
Then for any $n \in \ZZ_{> 0}$ the map 
$$\tilde \Omega \colon \irrp{X^n}\rightarrow \irrp {N^n}$$ 
given by $\tilde \Omega (\tilde \theta)=\tilde \varphi$ 
where $\tilde \theta=\theta_1\times \cdots \times \theta_n$ 
and $\tilde \varphi=\varphi_1\times \cdots \times \varphi_n$ 
with $\Omega(\theta_i)=\varphi_i$ is a bijection. 
Write $\tilde \Gamma =\Gamma \wr {\sf S}_n$. 
Then $\tilde \Omega $ is $\tilde \Gamma\times \c H$-equivariant 
and for every $\tilde\theta \in \irrp {X^n}$
$$(X^n\rtimes \tilde \Gamma_{\tilde \theta^{\c H}}, X^n, \tilde \theta)_{\c H}\geq_c(N^n\rtimes \tilde \Gamma_{\tilde \theta^{\c H}}, N^n, \tilde \varphi)_{\c H}.$$
\end{thm}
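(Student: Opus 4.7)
The bijectivity of $\tilde\Omega$ is immediate from that of $\Omega$. For the $\tilde\Gamma\times\c H$-equivariance, note that $\Omega$ is $\Gamma\times\c H$-equivariant by hypothesis, so the componentwise definition $\tilde\Omega(\theta_1\times\cdots\times\theta_n)=\Omega(\theta_1)\times\cdots\times\Omega(\theta_n)$ clearly intertwines the componentwise $\Gamma^n$-action and the diagonal $\c H$-action; commutation with the coordinate-permuting action of $\SSS_n$ is automatic from the symmetric definition. Since $\tilde\Gamma=\Gamma^n\rtimes\SSS_n$, this suffices.

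For the $\c H$-triple ordering, the plan is to reduce to situations where Lemma \ref{direct} and Theorem \ref{wreath2} can be applied. Partition the indices $\{1,\ldots,n\}$ according to the $\Gamma\times\c H$-orbit of $\theta_i$ on $\irrp{X}$. After reordering coordinates via a permutation in $\SSS_n\subseteq\tilde\Gamma$ (invoking Lemma \ref{iso}), the tuple decomposes into consecutive blocks corresponding to distinct orbits. As the stabilizer $\tilde\Gamma_{\tilde\theta^\c H}$ must preserve each block (no element of $\tilde\Gamma$ can mix coordinates from distinct $\Gamma\times\c H$-orbits), it factors as a direct product of block-stabilizers. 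Iteratively applying Lemma \ref{direct} reduces the problem to the case where all $\theta_i$ lie in a single $\Gamma\times\c H$-orbit $\zeta^{\Gamma\times\c H}$.

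In that case, fix the representative $\zeta$ and write $\theta_i=\zeta^{g_i\sigma_i}$ for $g_i\in\Gamma$, $\sigma_i\in\c H$. Conjugating by the diagonal element $(g_1,\ldots,g_n)\in\Gamma^n\subseteq\tilde\Gamma$ (again via Lemma \ref{iso}), we may assume $\theta_i\in\zeta^\c H$ for all $i$. After a further reordering so that coordinates in the same $\Gamma_{\zeta^\c H}$-orbit appear consecutively, $\tilde\theta$ takes the form of $m_1$ copies of $\zeta^{\tau_1}$, followed by $m_2$ copies of $\zeta^{\tau_2}$, and so on, for $\tau_j\in\c H$ representing the $k$ distinct $\Gamma_{\zeta^\c H}$-orbits occurring among the $\theta_i$. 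When the multiplicities $m_j$ all coincide (so $km=n$), Theorem \ref{wreath2} applied to the input ordering $(X\rtimes\Gamma_{\zeta^\c H},X,\zeta)_{\c H}\geq_c(N\rtimes\Gamma_{\zeta^\c H},N,\Omega(\zeta))_{\c H}$ yields the desired relation, after identifying $(X\rtimes\Gamma_{\zeta^\c H})\wr\SSS_n$ with $X^n\rtimes(\Gamma_{\zeta^\c H}\wr\SSS_n)$ and observing that $\tilde\Gamma_{\tilde\theta^\c H}\subseteq\Gamma_{\zeta^\c H}\wr\SSS_n$. When the multiplicities $m_j$ differ, one first splits the tuple along $\Gamma_{\zeta^\c H}$-orbit blocks (the stabilizer preserves this refinement, since differing multiplicities prevent mixing), applies Theorem \ref{wreath2} within each uniform block, and recombines via Lemma \ref{direct}.

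The main obstacle is careful bookkeeping: at each reduction step, one must verify that the stabilizer on the left-hand side of the resulting $\c H$-triple ordering is exactly $X^n\rtimes\tilde\Gamma_{\tilde\theta^\c H}$, rather than a larger or smaller subgroup. One must also confirm that in each final application of Theorem \ref{wreath2}, the non-$G$-conjugacy hypothesis on the distinct $\c H$-conjugates holds---which follows since within $\zeta^\c H$, distinct $\Gamma_{\zeta^\c H}$-orbits are by definition distinct $(X\rtimes\Gamma_{\zeta^\c H})$-orbits---and that the wreath-product/semidirect-product identifications are compatible with the character-theoretic structures involved.
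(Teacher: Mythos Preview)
Your overall strategy matches the paper's: normalize $\tilde\theta$ by conjugation in $\tilde\Gamma$ (Lemma~\ref{conjugate}), split off direct factors via Lemma~\ref{direct}, and finish with Theorem~\ref{wreath2}. Two points, however, need repair.

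First, a minor one: after conjugating by $(g_1,\ldots,g_n)\in\Gamma^n$ so that every factor lies in $\zeta^{\c H}$, a mere reordering does \emph{not} arrange that factors in the same $\Gamma_{\zeta^{\c H}}$-orbit are equal---two elements of $\zeta^{\c H}$ can be $\Gamma_{\zeta^{\c H}}$-conjugate without coinciding. You need a further conjugation by an element of $(\Gamma_{\zeta^{\c H}})^n$. The paper handles this in its Step~1 by first arranging that any two $\Gamma$-conjugate factors are literally equal.

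Second, and more seriously, your treatment of unequal multiplicities is wrong as stated. The claim ``the stabilizer preserves this refinement, since differing multiplicities prevent mixing'' is not true for the refinement into \emph{individual} $\Gamma_{\zeta^{\c H}}$-orbit blocks: an element of $\tilde\Gamma_{\tilde\theta^{\c H}}$ can certainly permute two such blocks provided they have the \emph{same} multiplicity. What is true---and what the paper proves carefully in its Step~2 via a multiset-of-$\c H_\Gamma$-orbits argument---is that any $\gamma\in\tilde\Gamma_{\tilde\theta^{\c H}}$ can only send the block of $\zeta^{\tau_i}$'s to the block of $\zeta^{\tau_j}$'s when $m_i=m_j$. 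Hence the correct decomposition is to group the orbit blocks by common multiplicity; the stabilizer then factors as a direct product over these multiplicity classes, Theorem~\ref{wreath2} applies within each class (where all multiplicities coincide), and Lemma~\ref{direct} reassembles them. Your text either omits the mixed-multiplicity case or asserts a false splitting; either way the argument as written does not go through, though the fix is exactly the paper's Step~2.
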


\begin{proof} Notice that $\tilde \Gamma=\aut{X^n}_{R^n}$. The fact that $\tilde \Omega$ is a 
$\tilde \Gamma \times \c H$-equivariant bijection 
follows straightforwardly from definitions. Also notice that $X^n\rtimes \tilde \Gamma=X^n(N^n\rtimes \tilde \Gamma)$ 
and $X^n \cap (N^n \rtimes \tilde \Gamma)=N^n$.

Given $\tilde \theta \in \irrp {X^n}$, we prove the statement
on $\c H$-triples in a series of 
steps concerning assumptions on $\tilde \theta$. 
Note that, by applying Lemma \ref{conjugate} we can replace $\tilde \theta$ by 
any $\tilde \Gamma$-conjugate of $\tilde \theta$.

If $\tilde \theta=\theta_1 \times \cdots \times \theta_n$, then we write $\c H_{\Gamma_i}$
for the subgroup of $\c H$ such that $\c H_{\Gamma_i}/\c H_{\theta_i}$ is the image of the natural monomorphism $\Gamma_{\theta_i^{\c H}}/\Gamma_{\theta_i}\rightarrow \c H /\c H_{\theta_i}$. We refer to the $\theta_i$ as factors of $\tilde \theta$.

\smallskip

{\em Step 1. We may assume that all factors of $\tilde \theta$ are $\c H$-conjugate and any two $\Gamma$-conjugate (in particular $\c H_{\Gamma_i}$-conjugate) factors are equal.}

By Lemma \ref{conjugate} and after conjugating by an element of $\Gamma^n$, we may assume that any two factors of $\tilde \theta$ are either equal or not $\Gamma$-conjugate. In particular, and after maybe conjugation by an element of ${\sf S}_n$, we can write $\tilde \theta=\bigtimes_{j=1}^\ell \tilde \theta_j$, where all the factors of $\tilde \theta_j$ lie in $\theta_j^{\c H}$. Notice that any two factors of $\tilde \theta _j$ are either equal or not $\c H _{\Gamma_j}$-conjugate. Hence $\tilde \Gamma_{\tilde \theta^{\c H} }=\bigtimes_{j=1}^\ell \Gamma_{\tilde \theta_j^{\c H}}$ and $\Gamma_{\tilde \theta}=\bigtimes_{j=1}^\ell \Gamma_{\tilde \theta_j}$. By Lemma \ref{direct} we may assume $\ell=1$, that is, all the factors of $\tilde \theta$ lie in the same $\c H$-orbit. 

\medskip 

{\em Step 2. We may assume that 
$\tilde \theta=\theta_1^{m}\times \cdots \times \theta_k^{m}$ for some $m$, where $\theta_i=\theta^{\sigma_i}$ for some $\sigma_i\in \c H$ and $\c H_\Gamma \sigma_i\neq \c H_\Gamma \sigma_j$ whenever $i\neq j$. Here $\c H_\Gamma=\c H _{\Gamma_{\theta^{\c H}}, \theta}$ as defined above.}

By Step 1 and after conjugating $\tilde \theta$ by an element of ${\sf S}_n$, we can write $\tilde \theta=(\theta_1)^{n_1}\times \cdots \times (\theta_k)^{n_k}$ where $\theta_i=\theta^{\sigma_i}$ for some $\sigma_ i \in \c H$ and two different $\sigma_i$ define different $\c H_\Gamma$-cosets. We work to show that any element $\gamma \in \tilde \Gamma_{\tilde \theta^{\c H}}$ permutes  $\theta_i$ and $\theta_j$ if, and only if, $n_i=n_j$. If we can show that then, after conjugating $\tilde \theta$ by an element of ${\sf S}_n$, we may decompose $\tilde \theta$ as a direct product of characters $\tilde \psi=\psi_1^s \times \cdots \times \psi_r^s$ which do not have any factor in common (pairwise). In particular, $\tilde \Gamma _{\tilde \theta^{\c H}}$ decomposes as a direct sum of $(\Gamma \wr {\sf S}_{sr})_{\tilde \psi^{\c H}}$. By Lemma \ref{direct} the claim of the step would follow. 

\smallskip

Given $\tilde \xi =\xi_1\times \cdots \times \xi_n \in \irr {X^n}$ such that all $\xi_i$ lie in one $\c H$-orbit, we can associate to $\tilde \xi$ the multiset $[\tilde \xi]$ of the $\c H_{\Gamma, \xi_1}$-orbits of the factors, namely
$[\tilde \xi]=[\xi_1^{\c H _{\Gamma, \xi_1}}, \ldots, \xi_n^{\c H_{\Gamma, \xi_1}}]$, where $\c H_{\Gamma, \xi_1}=\c H_{\Gamma_{\xi_1^{\c H}}, \xi_1}$
Note that for every $\gamma=(a_1, \ldots, a_n)\omega \in \tilde \Gamma$ we have that
$[\xi^\gamma]=[(\xi_1^{a_1})^{\c H_{\Gamma, \xi_1}}, \ldots, (\xi_n^{a_n})^{\c H_{\Gamma, \xi_1}}].$

Write $\tilde \theta=\beta_1 \times \cdots \times \beta_n$, so that each $\beta_i=\theta^{\tau_i}$ for some $\tau_i \in \c H$. 
Note that $\c H_{\Gamma}\tau_i =\c H_{\Gamma}\tau_j$ if, and only if, $\beta_i=\beta_j$.
Let $\gamma=(a_1 \ldots, a_n)\omega \in \tilde \Gamma$, with $a_i \in \Gamma$ and $\omega \in {\sf S}_n$. 
Then the multiplicity of $(\beta_i^{a_i})^{\c H _\Gamma}$ in $[\tilde \theta^\gamma]$ is the same as the multiplicity of $\beta_i^{\c H_\Gamma}$ in $[\tilde \theta]$, which is the same as the number of factors equal to $\beta_i$ in $\tilde \theta$. This is because $\beta_j^{a_j}$ lies in the $\c H_\Gamma$-orbit of $\beta_i^{a_i}$ if, and only if, $\c H_\Gamma \tau_i=\c H_\Gamma \tau _j$ (using that $\Gamma$-conjugate factors of $\tilde \theta$ are equal by Step 1). The latter happens if, and only if, $\beta_i=\beta_j$. 

For $(\gamma, \tau)\in (\tilde \Gamma \times \c H)_{\tilde \theta}$, we have $\gamma=(a_1, \ldots, a_n)\omega \in \tilde \Gamma_{\tilde \theta^{\c H}}$ and $[\tilde \theta^{\gamma \tau}]=[\tilde \theta]$. By the above paragraph the multiplicity of $(\beta_i^{a_i \tau})^{\c H _\Gamma}$ in $[\tilde \theta]=[\tilde \theta^{\gamma \tau}]$ equals the multiplicity of $\beta_i^{\c H_\Gamma}$ in $[\tilde \theta]$, that is, the number of factors equal to $\beta_i$ in $\tilde \theta$.
On the other hand $\tilde \theta^{\gamma \tau}=\tilde \theta$ if, and only if,  $\beta_j=\beta_i^{a_i\tau}$ whenever $\omega(i)=j$. Putting these two facts together we see that if $\omega(i)=j$ then,  the number of factors equal to $\beta_j$ in $\tilde \theta$ is the same as the number of factors equal to $\beta_i$ in $\tilde \theta$, as wanted.  

\medskip

{\em Final step.} By Step 2 we have that $\tilde \theta=\theta_1^m\times \cdots \times \theta_k^m$, where $\theta_i=\theta^{\sigma_i}$ for some $\sigma_i\in \c H$ and $\sigma_i$ and $\sigma_j$ define distinct $\c H_\Gamma$-cosets whenever $i\neq j$. The result then follows by applying Theorem \ref{wreath2} with $X \nor G=X\rtimes \Gamma_{\theta}$. Note that the condition that $\sigma_i$ and $\sigma_j$ define distinct $\c H_\Gamma$-cosets whenever $i\neq j$ is equivalent to saying that no $\theta_i$ is $X\rtimes \Gamma_{\theta^{\c H}}$-conjugate to $\theta_j$ if $i\neq j$.
\end{proof}

\begin{thm}\label{perfectcentralextension}
Suppose that $K\nor G$, where $K$ is perfect and $K/\zent K$ is 
isomorphic to a direct product of copies of a non-abelian 
simple group $S$. Let $Q \in \syl p K$. Assume that $S$ satisfies 
the inductive Galois--McKay condition for $p$. Then there exists an 
$\norm G Q$-invariant subgroup $\norm K Q \sbs M <K$ and an 
$\norm G Q \times \c H$-equivariant bijection 
$$\Omega \colon \irrp{K}\rightarrow \irrp{M} \, ,$$
 such that for every $\theta \in \irrp K$, we have 
$$(G_{\theta^{\c H}}, K, \theta)_{\c H}\geq_c (\norm G M_{\theta^{\c H}}, M,\Omega(\theta))_{\c H}\, .$$
\end{thm}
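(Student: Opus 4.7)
The plan is to lift to the universal central extension $X^n$ (where $X$ is the universal covering group of $S$), apply Theorem \ref{consindcond} there, and descend back to $K$ using Lemma \ref{quotient} and the butterfly theorem \ref{butterfly}. Since $K$ is perfect with $K/\zent K\cong S^n$, the group $X^n$ is the universal central extension of $K/\zent K$, so there is a surjection $\pi\colon X^n\to K$ whose kernel $Z$ lies in $\zent{X^n}$. Choose $R\in\syl p X$ so that $\pi(R^n)=Q$ (possible after an initial conjugation, since $R^n$ projects onto a Sylow $p$-subgroup of $K$), and write $\Gamma=\Aut(X)_R$ and $\tilde\Gamma=\Gamma\wr{\sf S}_n=\Aut(X^n)_{R^n}$. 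By the inductive condition for $S$ and Theorem \ref{consindcond}, we obtain a $\tilde\Gamma$-stable proper subgroup $N^n<X^n$ containing $\norm{X^n}{R^n}$ together with a $\tilde\Gamma\times\c H$-equivariant bijection $\tilde\Omega\colon\irrp{X^n}\to\irrp{N^n}$ such that
$$(X^n\rtimes\tilde\Gamma_{\hat\theta^{\c H}},\,X^n,\,\hat\theta)_{\c H}\geq_c(N^n\rtimes\tilde\Gamma_{\hat\theta^{\c H}},\,N^n,\,\tilde\Omega(\hat\theta))_{\c H}$$
for every $\hat\theta\in\irrp{X^n}$.

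Identify $\irrp K$ with the subset of $\irrp{X^n}$ of characters trivial on $Z$. Condition (iii) of Definition \ref{Hiso} applied to the elements of $Z\subseteq\zent{X^n}\subseteq\cent{X^n\rtimes\tilde\Gamma_{\hat\theta^{\c H}}}{X^n}$ forces the central characters to match, so $\tilde\Omega$ preserves triviality on $Z$ and restricts to a bijection $\Omega\colon\irrp K\to\irrp M$, where $M:=\pi(N^n)=N^n/Z$ is a proper subgroup of $K$ containing $\norm K Q=\pi(\norm{X^n}{R^n})$. Applying Lemma \ref{quotient} with the central subgroup $Z$, for each $\theta\in\irrp K$ corresponding to $\hat\theta$ we obtain
$$\bigl((X^n\rtimes\tilde\Gamma_{\hat\theta^{\c H}})/Z,\,K,\,\theta\bigr)_{\c H}\geq_c\bigl((N^n\rtimes\tilde\Gamma_{\hat\theta^{\c H}})/Z,\,M,\,\Omega(\theta)\bigr)_{\c H}.$$

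Finally, we transfer the overgroups to $G_{\theta^{\c H}}$ and $\norm G M_{\theta^{\c H}}$ via Theorem \ref{butterfly}. The conjugation action of $G$ on $K$ lifts uniquely to $\hat\Phi\colon G\to\Aut(X^n)$ by the universal property of $X^n$ as a central extension; a Frattini decomposition $G=K\norm G Q$ together with the compatible choice of $R$ ensures that $\hat\Phi(\norm G Q)\subseteq\tilde\Gamma$. Restricting both sides of the previous inequality to the preimages in $(X^n\rtimes\tilde\Gamma_{\hat\theta^{\c H}})/Z$ and $(N^n\rtimes\tilde\Gamma_{\hat\theta^{\c H}})/Z$ of the subgroups $\hat\Phi(G_{\theta^{\c H}})$ and $\hat\Phi(\norm G M_{\theta^{\c H}})$ of $\Aut(K)$ (an operation preserving the $\ge_c$ relation, as observed at the start of Section \ref{newfromold}), the butterfly theorem \ref{butterfly} then produces
$$(G_{\theta^{\c H}},\,K,\,\theta)_{\c H}\geq_c(\norm G M_{\theta^{\c H}},\,M,\,\Omega(\theta))_{\c H},$$
and the $\norm G Q\times\c H$-equivariance of $\Omega$ descends from that of $\tilde\Omega$.

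The main obstacle is the compatibility step for Theorem \ref{butterfly}: arranging the lift $\hat\Phi$ so that $\hat\Phi(\norm G Q)$ actually lies inside $\tilde\Gamma$, and verifying the equality $\hat\epsilon(\hat G)=\epsilon(G)$ of the induced automorphism groups after the restriction. Once this matching is set up correctly, the remaining ingredients—the central descent through $Z$, the preservation of $\c H$-equivariance, and the chain of $\c H$-triple manipulations—are all direct applications of the results developed in Sections \ref{Htriples} and \ref{newfromold}.
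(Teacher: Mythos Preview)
Your overall strategy---lift to $X^n$, apply Theorem~\ref{consindcond}, descend through $Z$ via Lemma~\ref{quotient}, then invoke the butterfly theorem---is exactly the paper's. There is, however, one genuine gap in the order of operations.

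When you write
\[
\bigl((X^n\rtimes\tilde\Gamma_{\hat\theta^{\c H}})/Z,\,K,\,\theta\bigr)_{\c H}\geq_c\bigl((N^n\rtimes\tilde\Gamma_{\hat\theta^{\c H}})/Z,\,M,\,\Omega(\theta)\bigr)_{\c H},
\]
you are implicitly assuming that $Z$ is normal in $X^n\rtimes\tilde\Gamma_{\hat\theta^{\c H}}$. But $\tilde\Gamma=\Aut(X^n)_{R^n}$ has no reason to stabilize the particular central subgroup $Z=\ker\pi$: for instance, with $X=\SL_2(5)$, $n=2$, and $Z=\zent X\times 1$, the transposition in ${\sf S}_2\le\tilde\Gamma$ moves $Z$; taking $\hat\theta=1_{X^2}$ shows the problem already for $\tilde\Gamma_{\hat\theta^{\c H}}=\tilde\Gamma$. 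So Lemma~\ref{quotient} cannot be applied at this point.

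The paper repairs this by first passing from $\tilde\Gamma$ to the stabilizer $B:=\tilde\Gamma_Z$ (restricting the $\ge_c$ relation to $X^n\rtimes B_{\hat\theta^{\c H}}$, which is harmless) and only then taking the quotient by $Z$. This is also the conceptually correct move: $B$ is precisely the part of $\tilde\Gamma$ that descends to $\Aut(K)_Q$, so that $(X^n/Z)\rtimes B\cong K\rtimes\Aut(K)_Q=:\hat G$, and your lift $\hat\Phi(\norm G Q)$ actually lands in $B$, not merely in $\tilde\Gamma$. Once you insert this restriction to $B$ before invoking Lemma~\ref{quotient}, the rest of your argument goes through and matches the paper's.
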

\begin{proof}
First note that the $\c H$-triples relations make sense. By the Frattini argument
$G=K \norm G Q$. Since $\norm G Q \sbs \norm G M$ by the assumptions on $M$
then $G=K\norm G M$. Also by the Frattini argument
$\norm G M=M\norm G Q$. 
Hence $\norm K M=K\cap \norm G M=M\norm K Q=M$. 

Notice that if the theorem is true for $Q$, then it is true for $Q^k$ for any $k \in K$. 
This is because $\Omega_k(\theta):=\Omega(\theta)^k$ would be $\norm G Q^k\times \c H$-equivariant
and by using Lemma \ref{conjugate}. Hence we may choose any Sylow $p$-subgroup of $K$.

Let $X$ be the universal covering of $S$, and let $\pi\colon X^n \rightarrow K$ be a covering of $K$ with $Z=\ker \pi \sbs \zent {X^n}$.
Since $S$ satisfies the inductive Galois--McKay condition for $p$,  we have $R$, $N$ and $\Omega$ given by Definition \ref{inductive}.
We prove the result with respect to $\pi(R^n)=Q\in \syl p K$. Write $M=\pi (N^n)\supseteq \norm K Q$.

The idea is to prove the theorem with respect to $K\nor \hat G=K\rtimes \aut K_Q$ and to use Theorem \ref{butterfly} to relate 
$G$ and $\hat G$ via their conjugation homomorphisms into $\aut K$.

We mimic the proof of Theorem 10.25 in \cite{Nav18}, see there for more details. 
Write $\Gamma=\aut X _R$ as in Definition \ref{inductive}
and $\tilde \Gamma=\Gamma \wr {\sf S}_n$.
 By Theorem \ref{consindcond} we have a $\tilde \Gamma\times \c H$-equivariant
bijection
$$\tilde \Omega \colon \irrp{X^n}\rightarrow \irrp {N^n}$$ 
such that for every $\tilde\theta \in \irrp {X^n}$
$$(X^n\rtimes \tilde \Gamma_{\tilde \theta^{\c H}}, X^n, \tilde \theta)_{\c H}\geq_c(N^n\rtimes \tilde \Gamma_{\tilde \theta^{\c H}}, N^n, \tilde \varphi)_{\c H}.$$
Since these are, in particular, central isomorphisms and $Z\sbs \zent{X^n}$ then $\tilde \theta$ lies over $1_Z$ if, and only if, $\tilde \varphi$ lies over $1_Z$. 
Let $B=\tilde \Gamma_Z \leq  \tilde\Gamma$. In particular, $\tilde \Omega$ yields a bijection that we denote by $\tilde \Omega$ again
$$\tilde \Omega \colon \irrp{X^n/Z}\rightarrow \irrp {N^n/Z}$$ 
which is $B\times \c H$-equivariant. By Lemma \ref{quotient}
$$(X^n/Z\rtimes  B_{\tilde \theta^{\c H}}, X^n, \tilde \theta)_{\c H}\geq_c(N^n/Z\rtimes B_{\tilde \theta^{\c H}}, N^n, \tilde \varphi)_{\c H},$$
for every $\tilde \theta \in \irrp{X^n/Z}$ and $\tilde \varphi=\tilde \Omega(\theta)$.

Note that $X^n/Z \times B\cong  \hat G$ via $\pi$ and under this isomorphism $N^n/Z \rtimes B$ corresponds to $M\rtimes \aut K _Q$. Hence we have proven that there exists an $\aut K_Q \times \c H$-equivariant bijection
$$\Omega \colon \irrp K \rightarrow \irrp M.$$
Moreover, by Lemma \ref{iso}
$$(K\rtimes (\aut K_Q)_{\tilde \theta^{\c H}}, K, \theta)_{\c H}\geq_c(M\rtimes (\aut K_Q)_{\tilde \theta^{\c H}}, M, \Omega(\theta))_{\c H},$$
whenever $\theta\in \irrp K$.

To finish the proof  apply Theorem \ref{butterfly}
as in the end of the proof of Theorem 10.25 of \cite{Nav18}:
Let $\epsilon \colon G \rightarrow \aut K$ and $\hat \epsilon \colon \hat G \rightarrow \aut K$ be the corresponding conjugation homomorphisms.
Let $\theta \in \irrp K$ and let $V=\epsilon (G_{\theta ^{\c H}})$. The same arguments as in the proof of Theorem 10.25 of \cite{Nav18}
show that if $\hat V :=\hat \epsilon ^{-1}(V)$,  then $\epsilon^{-1}( \epsilon (\norm{\hat V} M))=\norm {G_{\theta^{\c H}}}M=\norm G M_{\theta^{\c H}}$.
\end{proof}

The above result will be key in the reduction 
theorem carried out in Section \ref{thereduction}. 
Below we write the exact form in which it will be later on applied,
in which $K$ (in Theorem \ref{perfectcentralextension}) needs no longer
to be perfect.

\begin{cor}\label{centralextension}
Suppose that $K\nor G$, where $K/\zent K$ is 
isomorphic to a direct product of copies of a non-abelian 
simple group $S$, and let $Q \in \syl p K$.
Assume that $S$ satisfies 
the inductive Galois--McKay condition for $p$. 
If $(G, \zent K, \nu)_{\c H}$ is an $\c H$-triple, then there exists an 
$\norm G Q$-invariant subgroup $\norm K Q \sbs M <K$ and an 
$\norm G Q \times \c H$-equivariant bijection 
$$\Omega \colon \irrp{K|\nu^{\c H}}\rightarrow \irrp{M| \nu^{\c H}} \, ,$$
and for every $\theta \in \irrp {K|\nu^{\c H}}$
$$(G_{\theta^{\c H}}, K, \theta)_{\c H}\geq_c (H_{\varphi^{\c H}}, M, \varphi)_{\c H}\, ,$$
where $\varphi=\Omega(\theta)$ and $H=M\norm G Q$. In particular,
there are character-degree-ratio preserving $\c H$-equivariant bijections 
$$\hat \tau_\theta \colon \irr{G|\theta^{\c H}}\rightarrow \irr{H| \varphi^{\c H}} \, ,$$
satisfying
$\hat \tau_{\theta^h}=\hat \tau_\theta$ and
$\hat \tau_{\theta^\sigma}=\hat \tau_\theta$ for every $h \in H$ and $\sigma \in \c H$.
\end{cor}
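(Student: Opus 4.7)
The plan is to reduce to Theorem~\ref{perfectcentralextension} (the perfect case) by passing to the derived subgroup $L:=K'$ and then reassembling via Theorem~\ref{dotproduct}. Since $K/\zent K$ is perfect, being a direct product of copies of the non-abelian simple group $S$, we have $K=L\zent K$ and $L\cap\zent K=\zent L$, so $K$ is the central product of the perfect subgroup $L$ and the abelian group $Z:=\zent K$, and $L/\zent L\cong K/\zent K$ is still a direct product of copies of $S$. As $L$ is characteristic in $K$, $L\nor G$; and since $K/L$ is abelian, $R:=Q\cap L\in\syl p L$ and $\norm G Q\sbs\norm G R$.

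First I would apply Theorem~\ref{perfectcentralextension} to $L\nor G$ to obtain an $\norm G R$-invariant subgroup $\norm L R\sbs M_0<L$ and an $\norm G R\times\c H$-equivariant bijection $\Omega_0\colon\irrp L\to\irrp{M_0}$ satisfying
$$(G_{\theta^{\c H}},L,\theta)_{\c H}\geq_c(\norm G{M_0}_{\theta^{\c H}},M_0,\Omega_0(\theta))_{\c H}$$
for every $\theta\in\irrp L$. I would then set $M:=M_0Z$ and verify the required group-theoretic properties: $M<K$ properly (since $\zent L\sbs M_0<L$ gives $M/Z=M_0/\zent L<L/\zent L=K/Z$); $\norm K Q\sbs M$ (using $\norm K Q=Z\,\norm L Q\sbs Z\,\norm L R\sbs M$); $M$ is $\norm G Q$-invariant; and $Q\in\syl p M$, so a Frattini argument in $\norm G M$ yields $H=M\norm G Q=\norm G M=\norm G{M_0}$.

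Next, the central-product structure $K=L\cdot Z$ with $L\cap Z=\zent L$ parametrizes $\irrp{K|\nu^\sigma}$ as $\{\theta\cdot\nu^\sigma:\theta\in\irrp{L\,|\,\nu^\sigma|_{\zent L}}\}$, and analogously for $M=M_0\cdot Z$. Since the relation $\geq_c$ preserves central scalars on $\zent L\sbs\cent G L$, the bijection $\Omega_0$ respects this parametrization, so I may define
$$\Omega(\theta\cdot\nu^\sigma):=\Omega_0(\theta)\cdot\nu^\sigma,$$
which is $\norm G Q\times\c H$-equivariant because $\Omega_0$ is.

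Finally, for each $\chi=\theta\cdot\nu^\sigma\in\irrp{K|\nu^{\c H}}$ I would apply Theorem~\ref{dotproduct} to the triple relation for $\theta$ (after using Lemma~\ref{Hconjugate} to $\sigma$-shift to $\nu^\sigma$ if needed), with abelian normal subgroup $Z\sbs\cent G L$ and linear character $\nu^\sigma\in\irr{Z\,|\,\nu^\sigma|_{\zent L}}$; this delivers directly
$$(G_{\chi^{\c H}},K,\chi)_{\c H}\geq_c(H_{\varphi^{\c H}},M,\varphi)_{\c H}$$
with $\varphi=\Omega(\chi)$, upon using $LZ=K$, $M_0Z=M$, and $\norm G{M_0}=H$. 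The bijections $\hat\tau_\theta$ and their equivariance and invariance properties under $H$ and $\c H$ then follow from Corollary~\ref{corHequivariant} together with the conventions recorded after Lemmas~\ref{conjugate} and~\ref{Hconjugate}. I expect the main obstacle to be purely bookkeeping: verifying carefully the identifications $H=\norm G M=\norm G{M_0}$ and the matching of stabilizers among $\theta$, $\nu^\sigma$ and $\theta\cdot\nu^\sigma$ needed to feed into Theorem~\ref{dotproduct}; once these are settled, all character-theoretic content is supplied by the preceding results.
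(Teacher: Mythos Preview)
Your proposal is correct and follows essentially the same approach as the paper: pass to the perfect derived subgroup $L=K'$, apply Theorem~\ref{perfectcentralextension} there, set $M=M_0Z$, define $\Omega$ via the dot product, and invoke Theorem~\ref{dotproduct} to obtain the $\c H$-triple relation over $K=LZ$ and $M=M_0Z$. Your group-theoretic verifications (in particular $\norm K Q\sbs M$ and $H=M\norm G Q=\norm G{M_0}$) and your appeal to Corollary~\ref{corHequivariant} together with the remarks after Lemmas~\ref{conjugate} and~\ref{Hconjugate} for the properties of $\hat\tau_\theta$ match the paper's argument; the only cosmetic difference is notation ($L,M_0,R,\Omega_0$ versus the paper's $K_1,M_1,Q_1,\Omega_1$).
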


\begin{proof} Write $K_1=K'$ and $Z=\zent K$. Hence $K=K_1 Z$ 
is the central product of $K_1$ and $Z$, and $K_1$ is perfect. 
Also $Q_1:=Q\cap K_1\in \syl p {K_1}$. Let $M_1$ 
and $\Omega_1$ be given by Theorem 
\ref{perfectcentralextension} applied with respect to $K_1\nor G$. Note that $Z\cap K_1=Z\cap M_1$.  
Let $\nu_1=\nu_{Z\cap K_1}\in \irr{Z\cap K_1}$, $\theta_1 \in \irrp{K_1|\nu_1^{\sigma}}$ and $\varphi_1=\Omega_1(\theta_1)$.
By Theorem \ref{perfectcentralextension}
$$(G_{\theta_1^{\c H}}, K_1, \theta_1)_{\c H}\geq_c (\norm G {M_1}_{\theta_1^{\c H}}, M_1,\varphi_1)_{\c H}.$$
In particular, this implies that $\varphi _1$ lies over $\nu_1^{\sigma}$ and thus
 $\Omega_1$ maps $ \irrp{K_1|\nu_1^{\c H}}$ onto $ \irrp{M_1|\nu_1^{\c H}}$.
Write $M=M_1Z$, which the central product of $M_1$ and $Z$. Clearly $M$ is $\norm G Q$-invariant as $\norm G{Q}\sbs \norm G {Q_1}$.
 The desired bijection $\Omega$ can be obtained via $\Omega_1$ 
using the dot product of characters as follows (we refer the reader to the discussion preceding Theorem \ref{dotproduct} for more details).  Let $\theta \in \irrp{K|\nu^{\c H}}$ lie 
over $\nu^\sigma$. Then $\theta=\theta_1\cdot \nu^\sigma$ for some
$\theta_1\in \irrp{K_1|\nu_1^{\sigma}}$.  Define $\Omega(\theta):=\Omega_1(\theta_1)\cdot\nu^\sigma=\varphi_1\cdot \nu^\sigma \in \irr{M|\nu^\sigma}$. 
Hence  $$\Omega \colon \irrp{K|\nu^{\c H}}\rightarrow \irrp{M| \nu^{\c H}} \, $$
is an $\norm G Q \times \c H$-equivariant bijection with the desired properties.
Write  $\varphi=\Omega(\theta)$ and recall
$$(G_{\theta_1^{\c H}}, K_1, \theta_1)_{\c H}\geq_c (\norm G {M_1}_{\theta_1^{\c H}}, M_1, \varphi_1)_{\c H}\, .$$
Note that $\norm G{M_1}=H=M\norm G {Q}$, by the Frattini argument, as $Q\sbs M \nor \norm G {M_1}=M_1\norm G{Q_1}$. By Theorem \ref{dotproduct} 
$$(G_{\theta^{\c H}}, K, \theta)_{\c H}\geq_c (H_{\varphi^{\c H}}, M, \varphi)_{\c H}\, .$$
For each $\theta$, denote by $\hat \tau_\theta$ the bijection
provided by Corollary \ref{corHequivariant}. In particular,
$$\hat \tau_\theta \colon \irr {G|\theta^{\c H}}\rightarrow \irr{H| \varphi^{\c H}} $$ is $\c H$-equivariant and preserves ratios of character degrees.
The claims on  $\hat \tau_{\theta^h}$ and $\hat \tau_{\theta^\sigma}$ in the final part of the statement follow from the comments after Lemma \ref{conjugate}  and Lemma \ref{Hconjugate}.
\end{proof}
    
\section{The reduction}\label{thereduction}
 The following key result is due to F. Ladisch. It is based
 on work by  A. Turull. This is an $\c H$-triple version of the 
 well-known fact that a character triple $(G, N, \theta)$ can be replaced by 
 an isomorphic one $(G_1, N_1, \theta_1)$ with $N_1\sbs \zent{G_1}$, in such 
 a way
 that the character properties of $G$ over $\theta$
 are {\em the same} as the character properties
 of $G_1$ over $\theta_1$. If we wish to control
 fields of values of characters above $\theta$, this is no longer true. Still we can 
 somehow replace the original $\c H$-triple by 
 another one with convenient properties. 
  
  \begin{thm}[Ladisch]\label{ladisch}
  Suppose that $(G,Z,\lambda)_{\c H}$ is an $\c H$-triple. Then there exists
  another $\c H$-triple $(G_1, Z_1, \lambda_1)_{\c H}$ such that:
  \begin{enumerate}[{\rm (a)}]
  \item There is an onto homomorphism $\kappa_1\colon G_1 \rightarrow G/Z$
  with kernel $Z_1$.
    \item
  For every $Z \sbs X \le G$, there
  is an $\c H$-equivariant bijection
  $\psi \mapsto \psi_1$ from
  $\irr{X|\lambda^{\c H}} \rightarrow \irr{X_1|\lambda_1^{\c H}}$, where $\kappa_1(X_1)/Z=X/Z$,
  preserving ratios of character degrees; more precisely, if $\psi$ and $\psi_1$ 
  correspond under the above bijection then $\psi(1)/\lambda(1)=\psi_1(1)/\lambda_1(1)$.
  Furthermore, if $g_1 \in G_1$, $g=\kappa(g_1)$ and $\psi \in  \irr{X|\lambda^{\c H}}$,  then
  $$(\psi^g)_1=(\psi_1)^{g_1}\, .$$
  In particular, $(G_1)_{\lambda_1}$ is mapped to $G_\lambda$ via $\kappa_1$.
  \item There is a normal cyclic subgroup $C$ of $G_1$ with $C\sbs Z_1$, and a faithful $\nu \in \irr C$ such that
  $\nu^{Z_1}=\lambda_1 \in \irr{Z_1}$.
  \item $(G_1, C, \nu)_{\c H}$ is an $\c H$-triple.
  \item If $U=(G_1)_{\lambda_1}$ and $V=(G_1)_\nu$, 
  then $ U=Z_1V$ and $C=Z_1 \cap V$. Also $V=\cent {G_1} C$ and $C\sbs \zent V$.
  \end{enumerate}
  \end{thm}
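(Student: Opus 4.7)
The plan is to mimic the classical Isaacs-type construction that replaces a character triple by a ``linearized'' one with the normal subgroup central and the character linear (cf.\ Chapter 5 of \cite{Nav18}), while carefully tracking the Galois action throughout. The first step is to fix, via Corollary \ref{projectivecyclotomic}, a projective representation $\c P$ of $G_\lambda$ associated with $\lambda$ with entries in $\Q^{\rm ab}$ and factor set $\alpha$ taking values in a finite cyclic subgroup $U\le\C^\times$ of roots of unity, normalized so that $\alpha$ is trivial on $Z\times G_\lambda$ and on $G_\lambda\times Z$.

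I would then build $G_1$ as a central extension of $G/Z$ by a cyclic group $C\cong U$. Form $\widehat{G_\lambda}=\{(g,u):g\in G_\lambda,\ u\in U\}$ with multiplication $(g_1,u_1)(g_2,u_2)=(g_1g_2,\alpha(g_1,g_2)u_1u_2)$ as in the proof of Theorem \ref{projectivecyclotomic2}, and then glue on the $G/G_\lambda$-part via the embedding $G/G_\lambda\hookrightarrow \c H/\c H_\lambda$ established in Section \ref{Htriples}, using as cocycle data the functions $\mu_{g\sigma_g^{-1}}$ of Lemma \ref{mu} (for any $\sigma_g\in\c H$ with $\lambda^g=\lambda^{\sigma_g}$). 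The map $\kappa_1\colon G_1\to G/Z$ is the natural projection, $Z_1=\ker\kappa_1$, $C$ is the image of $\{1\}\times U$ (cyclic, central in $G_1$), $\nu\in\irr{C}$ is the faithful character $(1,u)\mapsto u$, and $\lambda_1:=\nu^{Z_1}$. The triple $(G_1,C,\nu)_\c H$ is then an $\c H$-triple because conjugation of $G_1$ on $C$ factors through $G/G_\lambda\hookrightarrow\c H/\c H_\lambda$, which yields (c) and (d).

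For the bijection of (b), I would use the classical recipe: any $\psi\in\irr{X|\lambda}$ with $Z\le X\le G_\lambda$ is the character of $\c Q\otimes\c P_X$ for some irreducible projective representation $\c Q$ of $X/Z$ with factor set $\alpha^{-1}|_{X\times X}$; by construction $\c Q$ lifts canonically to an ordinary representation of $X_1:=\kappa_1^{-1}(X/Z)$, and $\psi_1$ is defined as its character, which lies above $\nu$ and hence above $\lambda_1$. Extension to $\irr{X|\lambda^\c H}$ for $Z\le X\le G$ is handled by Clifford correspondence combined with the identification $G/G_\lambda\hookrightarrow \c H/\c H_\lambda$. Ratios of degrees are preserved since $\psi(1)/\lambda(1)=\dim\c Q=\psi_1(1)/\lambda_1(1)$; property (a) is immediate; (e) falls out of the definitions of $U=(G_1)_{\lambda_1}$ and $V=(G_1)_\nu=\cent{G_1}C$; and the formula $(\psi^g)_1=(\psi_1)^{g_1}$ follows because conjugation by $g_1\in G_1$ projects under $\kappa_1$ to conjugation by $g=\kappa_1(g_1)$ modulo $Z$.

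The main obstacle is the $\c H$-equivariance of the bijection in (b): I need to verify that applying $\sigma\in\c H$ to $\psi$ corresponds to applying $\sigma$ to $\psi_1$. This relies on the fact that $\c P^\sigma$ is a projective representation associated with $\lambda^\sigma$ with factor set $\alpha^\sigma$ (again of roots-of-unity values), together with a careful analysis of how the gluing cocycles $\mu_{g\sigma_g^{-1}}$ transform under $\sigma$; this is essentially Turull's ``Brauer--Clifford'' analysis, which Ladisch packages to fit the $\c H$-triple framework of this paper.
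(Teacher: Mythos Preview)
The paper does not actually prove this theorem: it simply observes that the $\c H$-triple hypothesis translates into $\lambda$ being \emph{semi-invariant} in $G$ over the field $\mathbb F=\Q(\xi_{|G|})^{\c H_{|G|}}$, and then invokes Theorem~A and Corollary~B of Ladisch \cite{L} together with Theorem~7.12(7) of Turull \cite{T2} for the conjugation compatibility in (b). No construction is carried out in the paper itself.

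Your sketch is in the right spirit (a cocycle-based central extension), but as written it does not produce the structure demanded by (c)--(e). With your definitions, $Z_1=\ker\kappa_1$ is the image of $\{(z,u):z\in Z,\ u\in U\}$ in $G_1$; since you normalized $\alpha$ to be trivial on $Z\times G_\lambda$ and $G_\lambda\times Z$, this subgroup is abelian, so $C$ is central in $Z_1$. Then $\nu$ is $Z_1$-invariant and $\nu^{Z_1}$ is \emph{not} irreducible unless $Z=1$, contradicting (c). The same centrality forces $\cent{Z_1}C=Z_1\neq C$, contradicting $Z_1\cap V=C$ in (e). (There is also an internal inconsistency: you call $C$ ``central in $G_1$'' but then argue for (d) via a nontrivial action of $G_1$ on $C$ through $G/G_\lambda$; if $C$ were central in $G_1$ then $V=\cent{G_1}C=G_1$, forcing $\lambda$ to be $G$-invariant.) Ladisch's actual construction is more delicate: $Z_1$ must be built non-abelian with $\cent{Z_1}C=C$, and this is tied up with the Galois twisting in a way your ``gluing via $\mu_{g\sigma_g^{-1}}$'' leaves unspecified.

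Finally, you yourself identify the $\c H$-equivariance of the bijection as the ``main obstacle'' and defer it to the Brauer--Clifford machinery of Turull and Ladisch. Since that equivariance is precisely the content of the theorem, your proposal does not constitute an independent proof; it ultimately rests on the same external results the paper cites.
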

  \begin{proof}
  Let $n=|G|$ and let
  $\c H_n \leq  {\rm Gal}(\QQ(\xi_n)/\QQ)$, 
  where $\xi_n$ is a primitive $n$th root of unity
  as in Section \ref{Htriples}.
  Notice that $\c H_n$ acts on the characters
  of any subgroup (or quotient) of $G$. Let $\mathbb F=\QQ(\xi_n)^{\c H_n}$, 
  so that $\c H_n={\rm Gal}(\QQ(\xi_n)/\mathbb F)$. 
  The fact that $(G,Z,\lambda)_{\c H}$ is
  an $\c H$-triple means that $\lambda$ is semi-invariant in 
  $G$ over $\mathbb F$ in the sense of \cite{L}
  (see page 47, second paragraph of \cite{L}). 
  Apply Theorem A and Corollary B of \cite{L}.
  The conjugation
  part in (b), which we shall later need,
   is not mentioned in Corollary B of \cite{L},
   but in Theorem 7.12(7) of \cite{T2}. 
   \end{proof}

 What follows is essentially a 
deep result by A. Turull concerning the 
Clifford theory and action of $\c H$
over Glauberman correspondents.

 \begin{thm}[Turull]\label{turull}
 Suppose that $G$ is a finite $p$-solvable group.
 Suppose that $K$ is a normal $p'$-subgroup of $G$, and that
 $Q$ is a $p$-subgroup such that $KQ \nor G$.  Let $D=\cent KQ$.
 Let $C$
 be a normal subgroup of $G$ such that $C \sbs \zent{KQ}$. Let $\nu \in \irr C$
 and assume that $(G, C, \nu)_\c H$ is an $\c H$-triple.
 Let $\Delta=\irrp{KQ|\nu^{\c H}}$ and let $\Delta'=\irrp{DQ|\nu^ \c H}$.
 Then there is an $\c H$-equivariant 
  bijection 
  $$f\colon\bigcup_{\tau\in \Delta} \irr{G|\tau} \rightarrow
 \bigcup_{\tau' \in \Delta'} \irr{\norm GQ|\tau'}\, .$$
 \end{thm}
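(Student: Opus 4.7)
The plan is to build $f$ as a composition of a Glauberman-type bijection on the bottom (between $\Delta$ and $\Delta'$) with a Clifford-correspondence layer on the top (relating $\irr{G|\tau}$ with $\irr{\norm G Q|\tau'}$), and then invoke Turull's deep results to upgrade equivariance for $\norm G Q$ into $\c H$-equivariance. The hypothesis that $(G,C,\nu)_{\c H}$ is an $\c H$-triple with $C\sbs \zent{KQ}$ is what allows us to sit in the setting where Turull's theorems on Clifford classes over $\QQ_p$ apply directly.

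First, since $K$ is a normal $p'$-subgroup and $Q$ is a $p$-subgroup acting on $K$ through $KQ$, I would invoke the Glauberman correspondence $\theta\mapsto \theta^{*}$ between $Q$-invariant irreducible characters of $K$ and irreducible characters of $D=\cent KQ$. Combined with the fact that $(|K|,|Q|)=1$, so that any $Q$-invariant character of $K$ extends canonically to $KQ$ (and likewise $D$-characters to $DQ$), this produces a natural bijection $\Delta\to\Delta'$. Turull's refinement of Glauberman's theorem ensures this bijection is both $\norm G Q$-equivariant and $\c H$-equivariant; the role of the $\c H$-triple $(G,C,\nu)_{\c H}$ is to keep track of the bottom $\nu^{\c H}$ under both the conjugation action and the Galois action in a compatible way.

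Next, for each corresponding pair $\tau\leftrightarrow\tau'$, I would use the Clifford correspondence to reduce to the respective stabilizers. The group-theoretic identity needed is
\[
  \norm G Q\cap G_\tau \;=\; \norm{G_\tau}{Q} \;=\; (\norm G Q)_{\tau'},
\]
where the first equality is Dedekind's modular law plus $Q\sbs G_\tau$, and the second follows from the $\norm G Q$-equivariance of the Glauberman bijection. Clifford correspondence then gives $\irr{G|\tau}\leftrightarrow\irr{G_\tau|\tau}$ and $\irr{\norm G Q|\tau'}\leftrightarrow\irr{(\norm G Q)_{\tau'}|\tau'}$, both $\c H$-equivariantly, so it suffices to produce an $\c H$-equivariant bijection between $\irr{G_\tau|\tau}$ and $\irr{(\norm G Q)_{\tau'}|\tau'}$. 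Then $f$ is assembled from these local bijections by sweeping over an $\norm G Q$-transversal of the $G$-orbits on $\Delta$, the orbit structures matching because the Glauberman step is $\norm G Q$-equivariant and $G=K\norm G Q$ by a Frattini argument.

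The heart of the matter—and the step I expect to be the main obstacle—is the invariant case: showing that above the Glauberman-correspondent pair $(\tau,\tau')$ the Clifford theories of $G_\tau/KQ$ and $(\norm G Q)_{\tau'}/DQ$ are not merely isomorphic as abstract character triples but are isomorphic in a way that preserves fields of values over $\QQ_p$, i.e.\ that respects the action of $\c H$. This cannot be obtained from the classical Isaacs--Dade refinement alone, because character triple isomorphisms in general destroy fields of values. Instead one must appeal to Turull's machinery of Clifford classes over number fields (and his Brauer--Clifford group), which was engineered precisely for this purpose; the assumption $(G,C,\nu)_{\c H}$ with $C\sbs\zent{KQ}$ provides the central cyclotomic hypothesis under which his theorems are formulated. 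Granting that, $\c H$-equivariance of the stabilizer-level bijection follows from Turull's main correspondence theorem, and the global $f$ is then $\c H$-equivariant by construction.
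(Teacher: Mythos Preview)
Your proposal is correct and follows essentially the same strategy as the paper: both rely on Turull's deep results above the Glauberman correspondence to obtain the $\c H$-equivariant bijection. The paper's proof differs only in packaging. Rather than explicitly performing the Clifford reduction to stabilizers and then invoking the Brauer--Clifford group machinery for the invariant case, the paper cites Turull's theorem (Theorem~3.2 of \cite{T4}) as a black box: for each $Q$-invariant $\theta\in\irr K$ it directly provides an $\c H$-equivariant bijection $f_\theta\colon \irr{G|\theta^{\c H}}\cap\irr{G|\nu_p^{\c H}}\to \irr{\norm GQ|\hat\theta^{\c H}}\cap\irr{\norm GQ|\nu_p^{\c H}}$, with the Clifford layer already absorbed. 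The paper also decomposes $C=C_{p'}\times C_p$ and tracks the $p$-part $\nu_p$ separately (since Glauberman applies to $K$, not $KQ$), invoking Theorem~10.1 of \cite{Tur17} to ensure the image lies over the \emph{correct} $\c H$-conjugate $\nu_p^\sigma$ rather than an arbitrary one; your approach of working directly with $\tau\in\irrp{KQ|\nu^{\c H}}$ packages this bookkeeping implicitly. One small point you should make explicit: when assembling $f$ from the local bijections over a transversal, you must choose them coherently across $\c H$-conjugate $\tau$'s (the paper does this by setting $f_{\theta^\sigma}=f_\theta$ for $\sigma\in\c H$), otherwise global $\c H$-equivariance is not automatic.
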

 
 \begin{proof}
 Write $C=C_{p'} \times C_{p}$ and $\nu=\nu_{p'} \times \nu_{p}$,
 where $\nu_{p'} \in \irr{C_{p'}}$, and $\nu_{p'} \in \irr{C_{p}}$.
 Notice that $C_{p'} \sbs D$ and $C_p \sbs Q$ by hypothesis.
 
Write ${\rm Irr}_Q(K)$ for the $Q$-invariant irreducible characters
of $K$, and for each $\theta \in {\rm Irr}_Q(K)$, let $\hat \theta \in \irr{D}$ be its
Glauberman correspondent. 

By Theorem 3.2 of \cite{T4}, for each $\theta \in {\rm Irr}_Q(K)$ 
 there is an $\c H$-equivariant bijection
 $$f_\theta\colon  \irr{G|\theta^\c H} \cap \irr{G|\nu_p^\c H}
 \rightarrow 
 \irr{\norm GQ |{\hat \theta}^\c H} \cap \irr{\norm GQ |\nu_p^\c H}$$
 satisfying a list of conditions.   
We can take $f_{\theta^\sigma}=f_\theta$ for every
 $\sigma \in \c H$ and  $f_{\theta^x}=f_\theta$ for every $x \in \norm GQ$.
 
 We define now $f(\chi)$ for $\chi \in \bigcup_{\tau \in \Delta} \irr{G|\tau}$.
 Suppose that $\chi$ lies over some $\tau \in \Delta$. 
 We have that $\tau_{C}=\tau(1)\nu^\sigma$ for some $\sigma \in \c H$, 
 by using that $C \sbs \zent{KQ}$.
 Also, $\tau_K=\theta \in {\rm Irr}_Q(K)$ since $\tau(1)$ has $p'$-degree 
 and $|KQ:K|$ is a power of $p$. Note that $\theta$ lies over $\nu_{p'}^\sigma$.
 Define
 $f(\chi):=f_\theta(\chi) \in \irr{\norm G Q | {\hat \theta}^{\c H}}\cap\irr{\norm G Q | \nu_p^{\c H}}$.
 
 Note that $f$ is well-defined. First, any other constituent of $\chi_{KQ}$ is 
$\tau^x$ for some $x \in \norm G Q$ and $f_{\theta^x}=f_\theta$.
By Theorem 3.2(1) of \cite{T4} $f_\theta(\chi)$ lies over ${\hat \theta}^\sigma$,
hence over $\nu_{p'}^\sigma$. In order to see that $f$ is  well-defined we need that 
$f_\theta(\chi)$ lies also over $\nu_p^\sigma$ and this in principle is not guaranteed by
Theorem 3.2 of \cite{T4} but by Theorem 10.1 of \cite{Tur17}.
Hence $f_\theta(\chi)$ lies over $\nu^\sigma$, and consequently
over some $\tau'\in \Delta'$.

The map $f$ is clearly surjective as
every element in  $\bigcup_{\tau' \in \Delta'} \irr{\norm GQ|\tau'}$ lies in
$ \irr{\norm GQ |\mu} \cap \irr{\norm GQ |\nu_p^\sigma}$ for some $\sigma \in \c H$ and $\mu \in \irr {D| \nu_{p'}^{\c H}}$.
Again using that $f_{\theta^x}=f_\theta$ for every  $x \in \norm G Q$  and every $\theta \in {\rm Irr}_Q(K)$
one can check that $f$ is injective. 
 
 Finally $f$ is $\c H$-equivariant as every $f_\theta$ is $\c H$-equivariant.
\end{proof}

\medskip

In contrast to the proof of the reduction theorem for the McKay conjecture in \cite{IMN}, we cannot work
with characters of $p'$-degree. We have to work instead with characters of relative $p'$-degree. Those are defined as follows.
For $N \nor G$ and $\theta \in \irr N$, we say that $\chi \in \irr{G|\theta}$ 
has {\bf relative $p'$-degree} with respect to $N$ (or to $\theta$) if
the ratio $\chi(1)/\theta(1)$ is not divisible by $p$.
We denote by $\irrpr{G|\theta}$ the set of irreducible relative $p'$-degree characters with respect to $N$.
The following are easy properties of relative $p'$-degree characters.

\begin{lem}\label{relp}
Suppose that $N \nor G$ and $\theta \in \irr N$. Let $P \in \syl pG$.
\begin{enumerate}[{\rm (a)}]

\item
If $\chi \in \irrpr{G|\theta}$, then $\chi_N$ has some $P$-invariant irreducible
constituent and any two of them are $\norm GP$-conjugate. These $P$-invariant
constituents extend to $NP$.

\item
Suppose that  $N \sbs M \nor G$, and let $\chi \in \irr{G|\eta}$, where $\eta \in \irr{M|\theta}$.
Then $\chi \in \irrpr{G|\theta}$ if, and only if, $\chi \in \irrpr{G|\eta}$ and $\eta \in \irrpr{M|\theta}$.
\end{enumerate}
\end{lem}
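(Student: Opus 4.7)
The plan is to prove (a) and (b) by unwinding Clifford theory and exploiting that all orbit sizes and indices in sight are controlled by the relative degree condition.

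For part (a), I would first handle existence. By Clifford's theorem, $\chi_N = e\sum_{t}\theta^t$ where $t$ runs over coset representatives of $G_\theta$ in $G$ and $e$ is the ramification index, so $\chi(1)/\theta(1) = e\,|G:G_\theta|$. Since the left side is coprime to $p$, so is $|G:G_\theta|$, and therefore $P$ acts on the $G$-orbit of $\theta$ (of size $|G:G_\theta|$) with $p'$-many points. By the orbit-counting argument, $P$ fixes some constituent $\theta_0$ of $\chi_N$. For the $\norm GP$-conjugacy claim, I would take two $P$-invariant constituents $\theta_1,\theta_2=\theta_1^g$, observe that both $P$ and $P^{g^{-1}}$ are Sylow $p$-subgroups of $G_{\theta_1}$ (since $|G:G_{\theta_1}|$ is a $p'$-number), apply Sylow's theorem inside $G_{\theta_1}$ to get $h\in G_{\theta_1}$ with $P^{g^{-1}}=P^h$, and conclude $hg\in \norm GP$ and $\theta_1^{hg}=\theta_2$.

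The main technical step is to show that $\theta_0$ extends to $NP$. I would let $\psi\in\irr{G_{\theta_0}|\theta_0}$ be the Clifford correspondent of $\chi$, so that $\psi(1)/\theta_0(1)=e$ is coprime to $p$. Since $N\sbs NP\sbs G_{\theta_0}$ with $NP/N$ a $p$-group and $\theta_0$ being $NP$-invariant, every irreducible constituent $\eta$ of $\psi_{NP}$ lies over $\theta_0$ and therefore has degree $\eta(1)=p^{a}\theta_0(1)$ for some $a\geq 0$ (since $\eta(1)/\theta_0(1)$ divides $|NP:N|$, a $p$-power). Writing $\psi_{NP}=\sum_i m_i\eta_i$ and dividing by $\theta_0(1)$ gives $\psi(1)/\theta_0(1)=\sum_i m_i p^{a_i}$. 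Since the left side is coprime to $p$, at least one term must satisfy $a_i=0$, producing an extension of $\theta_0$ to $NP$.

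For part (b), the argument is immediate from the multiplicativity
\[
\frac{\chi(1)}{\theta(1)}=\frac{\chi(1)}{\eta(1)}\cdot\frac{\eta(1)}{\theta(1)},
\]
where both factors on the right are positive integers by Clifford's theorem applied to $M\nor G$ and to $N\nor M$ respectively. A product of positive integers is coprime to $p$ if and only if both factors are coprime to $p$, which is precisely the equivalence claimed. I expect the extension statement in (a) to be the only step requiring any care; the rest is essentially bookkeeping.
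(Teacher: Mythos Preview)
Your proof is correct and follows essentially the same approach as the paper. The only difference is organizational: the paper decomposes $\chi_{PN}$ directly and finds a constituent $\delta_i$ with $p\nmid \delta_i(1)/\theta(1)$, which simultaneously yields the $P$-invariant constituent $\eta=(\delta_i)_N$ and the extension $\delta_i$ of $\eta$ to $PN$, whereas you handle existence by orbit-counting and extension via the Clifford correspondent separately; part (b) and the Sylow-conjugacy step in (a) are identical in both.
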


\begin{proof}
We can write $\chi_{PN}=a_1\delta_1 + \ldots + a_k\delta_k$, where $\delta_i \in \irr{PN}$ lies
over some $G$-conjugate of $\theta$. In particular, $\delta_i(1)/\theta(1)$ is an integer.
Since $\chi(1)/\theta(1)$ is not divisible by $p$, it follows that there is some $i$ such that
$p$ does not divide $\delta_i(1)/\theta(1)$. Since this number divides $|PN:N|$, it follows
that $(\delta_i)_N=\eta \in \irr N$ is $P$-invariant. Suppose that $\eta^g$ is another $P$-invariant irreducible
constituent, then $P, P^g \sbs G_{\eta^g}$, and by Sylow theory, we have that $\eta^g$ and $\eta$
are $\norm GP$-conjugate. In particular, $\eta^g=\eta^h$ for some $h \in \norm G P$,
then $\eta^h$ also extends to $(PN)^h=PN$.  The second part easily follows using that
$\chi(1)/\theta(1)=(\chi(1)/\eta(1))(\eta(1)/\theta(1))$.
\end{proof}

 Let $Z\nor G$ and  $\lambda \in \irr Z$.  We will denote by $\irrpr{G|\lambda^{\c H}}$ the subset  
 of relative $p'$-degree characters of $\irr{G|\lambda^{\c H}}$ (with respect to $Z$).
Recall that whenever $X \leq G$ contains $G_{\lambda^\c H}$, the induction of characters
 defines an $\c H$-equivariant bijection
 $$\irr{X|\lambda^{\c H}}\rightarrow \irr{G|\lambda^{\c H}} \, .$$

 We are finally ready to prove the main result of this note.

\begin{thm}\label{reduction}
Let $Z \nor G$, $P \in \syl p G$
 and $\lambda \in \irr Z$ be $P$-invariant. Write $H=\norm G P Z$. Assume that every simple group involved in $G/Z$ satisfies the inductive Galois--McKay condition for $p$. Then there exists an $\c H$-equivariant bijection between 
  $\irrpr{G|\lambda^{\c H}}$ and $\irrpr{H|\lambda^{\c H}}$. 
 \end{thm}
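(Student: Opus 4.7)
The plan is to argue by induction on $|G:Z|$. If $G=HZ$ the identity is the required bijection, so assume $G>HZ$. I first reduce to the situation where $Z$ is cyclic central and $\lambda$ is faithful linear, by applying Ladisch's theorem (Theorem \ref{ladisch}) to $(G,Z,\lambda)_\c H$: it produces an $\c H$-triple $(G_1,Z_1,\lambda_1)_\c H$ with $G_1/Z_1\cong G/Z$ (so the hypothesis on simple groups involved is preserved) and an $\c H$-equivariant, character-degree-ratio-preserving correspondence $\irr{X|\lambda^\c H}\to\irr{X_1|\lambda_1^\c H}$ for all $Z\sbs X\le G$; this restricts to the relative $p'$-degree subsets and lets us replace the problem by the corresponding one for $(G_1,C,\nu)_\c H$, where by parts (c)--(e) $C$ is cyclic, central in the stabilizer, and $\nu$ is faithful.

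Now pick a proper normal subgroup $Z\subsetneq K\nor G$ with $K/Z$ a minimal normal subgroup of $G/Z$, and split on its structure. If $K/Z$ is a $p$-group, I use Clifford theory to extend $\lambda$ to a $P$-invariant irreducible character $\tilde\lambda$ of a suitable intermediate subgroup $Z<Z'\le K$; induction of characters is an $\c H$-equivariant, ratio-preserving bijection $\irrpr{X_{\tilde\lambda^\c H}|\tilde\lambda^\c H}\to\irrpr{X|\lambda^\c H}$ for $X\in\{G,H\}$ (using Lemma \ref{relp}(b)), so inducting on $|G:Z'|<|G:Z|$ finishes. If $K/Z$ is a $p'$-group, I apply Turull's theorem (Theorem \ref{turull}) with $Q=P\cap KP$ and $D=\cent K Q$: it supplies the required $\c H$-equivariant bijection between characters of $G$ over $\irrp{KQ|\nu^\c H}$ and characters of $\norm G Q$ over $\irrp{DQ|\nu^\c H}$; restricting to relative $p'$-characters and iterating on $\norm G Q$ (where $|\norm G Q:Z|<|G:Z|$) completes the case.

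The heart of the argument is the case where $K/Z$ is a direct product of non-abelian simple groups. Here the inductive Galois--McKay hypothesis is invoked via Corollary \ref{centralextension}, applied to $\nu=\lambda_{\zent K}$: it produces an $\norm G Q\times\c H$-equivariant bijection $\Omega\colon\irrp{K|\lambda^\c H}\to\irrp{M|\lambda^\c H}$ for a proper $\norm G Q$-invariant $M<K$ containing $\norm K Q$, and for each $\theta\in\irrp{K|\lambda^\c H}$ with $\varphi=\Omega(\theta)$ an $\c H$-equivariant character-degree-ratio-preserving bijection $\hat\tau_\theta\colon\irr{G|\theta^\c H}\to\irr{H'|\varphi^\c H}$ where $H'=M\norm G Q$. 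Because Corollary \ref{centralextension} guarantees $\hat\tau_{\theta^h}=\hat\tau_\theta=\hat\tau_{\theta^\sigma}$ for $h\in H'$ and $\sigma\in\c H$, these glue unambiguously over the $\c H$-orbits in $\irrp{K|\lambda^\c H}$. Using Lemma \ref{relp}(b) with the chain $Z\le K\le G$, the ratio-preserving property of $\hat\tau_\theta$ ensures that $\irrpr{G|\lambda^\c H}$ (characters whose full degree is coprime to $p$ modulo $\lambda(1)$) maps to $\irrpr{H'|\lambda^\c H}$; since $H'<G$ and still $\norm{H'}PZ=H$, the inductive hypothesis applied to $H'$ then yields the desired bijection $\irrpr{H'|\lambda^\c H}\to\irrpr{H|\lambda^\c H}$, and composition finishes the proof.

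The principal obstacle is the non-abelian case: one must verify that the gluing across $\c H$-orbits of $K$-characters is well-defined and $\c H$-equivariant, and that the relative $p'$-degree condition with respect to $Z$ truly transports through the ratio-preserving bijections $\hat\tau_\theta$ (a two-step application of Lemma \ref{relp}, first through $\zent K$ and then through $K$). A secondary concern is to ensure a proper chief factor exists at each step; if $G/Z$ is itself simple or characteristically simple, Case (iii) applies directly without further induction. Finally, throughout the induction one needs that all intermediate groups ($\norm G Q$, $G_{\tilde\lambda^\c H}$, $H'$) still have $Z$ normal with $\lambda$ $P$-invariant and that only simple groups involved in $G/Z$ are involved in the new quotients -- this is immediate in each case.
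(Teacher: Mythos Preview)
Your overall architecture matches the paper's: induction on $|G:Z|$, Ladisch's theorem, split on the type of a chief factor, Turull for the $p'$-case, Corollary~\ref{centralextension} for the non-abelian case. However, several of your steps have genuine gaps.

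First, Ladisch's theorem applies only to an $\c H$-triple, i.e.\ requires $G=G_{\lambda^{\c H}}$; you never reduce to this. The paper does it as its Step~1 via Clifford induction from $G_{\lambda^{\c H}}$. Also, Ladisch does not let you ``replace the problem by the one for $(G_1,C,\nu)_{\c H}$'': the correspondence it provides is between $\irr{X|\lambda^{\c H}}$ and $\irr{X_1|\lambda_1^{\c H}}$, while $(C,\nu)$ is additional internal structure in $G_1$ with $\nu^{Z_1}=\lambda_1$. The proof must continue to work with $(Z_1,\lambda_1)$ and use $(C,\nu)$ only as an auxiliary tool.

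The most serious gap is your $p$-group case. Extending $\lambda$ to a single $P$-invariant $\tilde\lambda$ on some $Z'>Z$ does \emph{not} give a bijection $\irrpr{X_{\tilde\lambda^{\c H}}|\tilde\lambda^{\c H}}\to\irrpr{X|\lambda^{\c H}}$: characters over $\lambda$ lie over many different extensions to $Z'$, not over a single $\c H$-orbit. The paper avoids this entirely by first proving (its Step~4) that for \emph{any} chief factor $L/Z$ with $L\sbs G_\lambda$ one may assume $G=LH$. This is a separate inductive argument: one partitions $\irrpr{G|\lambda^{\c H}}$ by $\norm GP\times\c H$-orbits of $P$-invariant $\theta\in\irrpr{L|\lambda^{\c H}}$ and applies the inductive hypothesis on $|G:L|$ to each piece to reach $LH$; if $LH<G$, a further induction on $|LH:Z|$ finishes. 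Once $G=LH$ is known, the $p$-group case is trivial: $L\sbs PZ$ forces $G=H$, a contradiction. You are missing this preliminary reduction, and without it your $p$-group argument does not go through.

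Your $p'$-case is also not correctly set up. With $K/Z$ a $p'$-group, $K$ itself need not be a $p'$-group (since $Z$ may have $p$-part), so Turull's theorem does not apply to $K$ directly; and ``$Q=P\cap KP$'' is just $P$, which makes no sense in Theorem~\ref{turull}. The paper uses the Ladisch structure here: setting $K=L\cap G_\nu$ one has $K/C\cong L/Z$ and $C\sbs\zent K$, so $K=C_p\times K_{p'}$; Turull is then applied with the $p'$-group $K_{p'}$ and $Q=P_\nu=P\cap G_\nu$, landing in $\norm G{P_\nu}$, which is shown to lie strictly between $H$ and $G$ (the paper's Step~6).

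Your non-abelian case is essentially the paper's Final Step and is correct in outline; the gluing of the $\hat\tau_\theta$ and the check that relative $p'$-degree is preserved are exactly what the paper does, using its Step~7 decomposition (which you should also isolate explicitly).
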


\begin{proof}
 We argue by induction 
 on $|G:Z|$.
We also may assume that $H<G$, 
otherwise the statement trivially holds. 

\medskip

{\em Step 1.~~We may assume that $G=G_{\lambda^\c H}$. In particular $G_\lambda \nor G$, $G=G_\lambda H$ and $Z<G_\lambda$.}

Induction of characters defines
$\c H$-equivariant bijections $\irr{G_{\lambda^\c H}|\lambda^{\c H}}
\rightarrow \irr{G|\lambda^{\c H}}$ and $\irr{H_{\lambda^\c H}|\lambda^{\c H}}
\rightarrow \irr{H|\lambda^{\c H}}$. Since $P\sbs G_\lambda$, relative $p'$-degree characters
are mapped onto relative $p'$-degree characters. Hence we may assume $G=G_{\lambda^{\c H}}$.
In particular $(G,Z,\lambda)_{\c H}$ and
$(H,Z,\lambda)_{\c H}$ are $\c H$-triples,
and $G_\lambda \nor G$.
Using the Frattini argument we have that $G=G_\lambda H$.
If $G_\lambda=Z$, then $G=H$ contradicts our first assumption.

\medskip

{\em Step 2.~~We may assume that $G$ has a normal cyclic subgroup 
$C$ contained in $Z$ and a faithful character $\nu \in \irr C$ 
such that $\nu^Z=\lambda$ and $(G, C, \nu)_{\c H}$ is an
$\c H$-character triple. In particular, $G_\lambda=G_\nu Z$ and
$G_\nu \cap Z=C$. Moreover $G_\nu \nor G$ and $C\sbs \zent{G_\nu}$.}

By Theorem \ref{ladisch} there exists an $\c H$-character triple 
$(G_1, Z_1, \lambda_1)_{\c H}$, a group epimorphism 
$\kappa_1\colon G_1\rightarrow G/Z$ with kernel $Z_1$ and 
$\c H$-equivariant character bijections 
$\irrp{G|\lambda^{\c H}}\rightarrow \irrp{G_1|\lambda_1^{\c H}}$ 
and $\irrp{H|\lambda^{\c H}}\rightarrow \irrp{H_1|\lambda_1^{\c H}}$, 
where $\kappa_1(H_1)=H$ with $Z_1\sbs H_1$. 
These bijections also commute with group conjugation
as in Theorem \ref{ladisch}(b).
Let $Z_1\sbs (PZ)_1\leq (G_1)_{\lambda_1}$ be such that 
$\kappa_1((PZ)_1)=PZ$ and let $P_1 \in \syl p {(PZ)_1}$. 
Then $P_1\in \syl p {G_1}$, $(PZ)_1=P_1Z_1$ and also 
$H_1=\norm {G_1}{P_1}Z_1$ by the Frattini argument. 
By Theorem \ref{ladisch},
all the requirements of the claim are satisfied in $G_1$. 
Since $|G:Z|=|G_1: Z_1|$,
it is no loss if we work in $G_1$ instead of in $G$.

\medskip

{\em Step 3.~~If $H \sbs X< G$, then there
exists an $\c H$-equivariant bijection between 
  $\irrpr{X|\lambda^{\c H}}$ and $\irrpr{H|\lambda^{\c H}}$.}
  
  This follows by induction since $|X:Z|<|G:Z|$.
  
  \medskip

{\em Step 4.~~Let $L/Z$ be a chief factor
of $G$ with $L \sbs G_\lambda$. Then $G=LH$. In other words, $LP \nor G$.}

Recall that $H=\norm G P Z$.  
Define $\Theta_0$ to be
a complete set of representatives of the orbits
of $\norm GP \times \c H$ on the
$P$-invariant characters in
$\irrpr{L|\lambda^{\c H}}$.
By Lemma \ref{relp} every relative $p'$-degree character of $G$ with respect to $Z$ lies
over a unique $\norm GP$-orbit of
$P$-invariant characters  of relative $p'$-degree of $L$ with respect to $Z$.
Since $(G,Z, \lambda)_\c H$
is an $\c H$-triple, one can easily check that
$$\irrpr{G|\lambda^{\c H}}=\dot{ \bigcup_{\theta \in \Theta_0}} \irrpr{G|\theta^{\c H}}$$ 
is  a disjoint union. Similarly
$$\irrpr{LH|\lambda^{\c H}}= \dot{\bigcup_{\theta \in \Theta_0}} \irrpr{LH|\theta^{\c H}}.$$

Since $Z<L$, then  $|G: L|<|G  : Z|$, and
by induction 
we have $\c H$-equivariant bijections
$\irrpr{G|\theta^{\c H}} \rightarrow \irrpr{LH|\theta^{\c H}}$ whenever $\theta \in \Theta_0$.
This defines an $\c H$-equivariant bijection
$$\irrpr{G|\lambda^{\c H}} \rightarrow \irrpr{LH|\lambda^{\c H}}\,.$$
If $LH<G$, then by Step 3, we are done. The latter claim of the step
follows immediately since $\norm {G/L}{PL/L}=\norm G P L /L$
by the Frattini argument.

\medskip

{\em Step 5.~~We may assume $L/Z$ is not a $p$-group.}

Notice that, by Step 4, $G=LH$ where $H=\norm G PZ$. 
If $L/Z$ is a $p$-group then $H=G$, contradicting a previous assumption.

\medskip

{\em Step 6. Write $P_\nu=P\cap G_\nu$ and $K=L\cap G_\nu$. 
Then $H\sbs \norm G {P_\nu}<G$, $KP_\nu \nor G$
and $PZ=P_\nu Z$.}

Since $(G, C, \nu)_\c H$ is an $\c H$-triple, recall that $G_\nu\nor G$.
Then $K=L\cap G_\nu \nor G$.
Note that $(LP)_\nu=KP_\nu$.
Using that $ZP/Z$ is a Sylow $p$-subgroup of $G_\lambda/Z$,
that $P_\nu$ is a Sylow $p$-subgroup of $G_\nu$, and that
$G_\lambda=G_\nu Z$ with $G_\nu \cap Z=C$,
we conclude that $ZP_\nu=ZP$. By Dedekind's lemma
 $ZP \cap G_\nu=CP_\nu$. 
 A similar argument can be used to show that
$LP \cap G_\nu=KP_\nu$. (Note that $L=KZ$ with $K\cap Z=C$
and work with $Q=P_\nu\cap K \in \syl p K$.)

Recall that $LP\nor G$, hence and $KP_\nu=LP\cap G_\nu\nor G$.
Also $CP_\nu=C_{p'} \times P_\nu$, since $C$ is central in $G_\nu$.
Since $Z$ normalizes $CP_\nu$, it follows that $Z$ normalizes $P_\nu$.
Notice that if $P_\nu\nor G$, then $ZP \nor G$ (because $ZP_\nu =ZP$), a contradiction.
 Hence $H\sbs\norm G{P_\nu}<G$.

\medskip

{\em Step 7.~~Let $Y \leq G_\nu$ be such that $KY, LY \nor G$. Then
$$\irrpr{G|\lambda^\c H}=\bigcup_{\theta \in \Delta_Y} \irrpr{G|\theta^{LY}} =\bigcup_{\theta \in \Delta_Y} \irrpr{G|\theta} \, $$
where $\Delta_Y=\irrp{KY|\nu^{\c H}}$. Whenever $H\sbs X \leq G$ and $Y\sbs X$, we also have that
$$\irrpr{X|\lambda^\c H}=\bigcup_{\theta' \in \Delta'_Y} \irrpr{X|\theta^{LY\cap X}} =\bigcup_{\theta' \in \Delta'_Y} \irrpr{X|\theta} \, $$
where $\Delta'_Y=\irrp{KY\cap X|\nu^{\c H}}$.}

First note that $LY=(KY)Z$ and $KY\cap Z=C$. Moreover $(LY)_{\nu^\sigma}=KY$, 
whenever $\sigma \in \c H$. Note that induction of characters 
defines a bijection $\irr{KY|\nu^\sigma}\rightarrow \irr{LY | \lambda^\sigma}$
for every $\sigma \in \c H$.
Let $\chi \in \irrpr{G|\lambda^\c H}$.
Let $\mu \in \irr{LY}$ be under $\chi$ and over $\lambda^\sigma$, for some $\sigma \in \c H$. 
By Lemma \ref{relp}(a) $\mu \in \irrpr{LY|\lambda^\sigma}$ and $\chi(1)/\mu(1)$ is a $p'$-number. 
Let $\theta \in \irr{KY}$ be under $\mu$ and over $\nu^\sigma$. Then $\theta^{LY}=\mu$. 
Also $\mu(1)/\lambda(1)=\theta(1)/\nu(1)=\theta(1)$ is a $p'$-number.
The other inclusion is shown similarly. 
If $H \sbs X \le G$ and $Y\sbs X$, we can use the same argument to show that
$$\irrpr{X|\lambda^\c H}=\bigcup_{\theta' \in \Delta'_Y} \irrpr{X|(\theta')^{LY\cap X}}=\bigcup_{\theta' \in \Delta'_Y} \irrpr{X|\theta'} \, ,$$
where $\Delta'_Y=\irrp{ (LY \cap X)_\nu|\nu^\c H}$. Notice that in $X$ we still have 
$X=X_\lambda H$, 
$X_\lambda=X_\nu Z$ and the rest of the conditions with respect to the normal subgroups $LY \cap X=(L\cap X)Y$ and $KY\cap X=(K\cap X)Y$.
We have intentionally omitted the dependence of $\Delta'_Y$ on $X$ in the notation, but this shall not lead to confusion as the subgroup $X$ will be clear when applied this step. 
 
\medskip

{\em Step 8.~~We may assume $L/Z$ is not a $p'$-group.}

If $L/Z$ is a $p'$-group, we see that
$G$ is $p$-solvable and $K/C$ is a $p'$-group. Since $C \sbs \zent K$,
write $K=C_p \times K_{p'}$, where $K_{p'}$ is a normal 
$p$-complement of $K$.
Write $P_\nu=P\cap G_\nu$ and $X=\norm G{P_\nu}$. By Step 6,
$H\sbs X <G$. By Step 7, taking $Y=P_\nu$ we have
$$\irrpr{G|\lambda^\c H}=\bigcup_{\theta \in \Delta} \irrpr{G|\theta} =\bigcup_{\theta \in \Delta} \irr{G|\theta}\, $$
where $\Delta=\Delta_Y$ and we are using that $LP_\nu=LP$ and $|G:LP|$ is a $p'$-number. Also 
by Step 7, since $Y=P_\nu\sbs X$ we have
$$\irrpr{X|\lambda^\c H}=\bigcup_{\theta' \in \Delta'} \irr{X|\theta'} \, ,$$
where $\Delta'=\Delta'_Y$. Just note that 
$(LY\cap X)_\nu=KP_\nu\cap\norm{G_\nu}{P_\nu}=\norm K{P_\nu}P_\nu=DP_\nu$
where $D=\cent{K_{p'}} {P_\nu}=\norm {K_{p'}}{P_\nu}$.
Hence $\Delta=\irrp{K_{p'}P_\nu|\nu^\c H}$ and $\Delta'=\irrp{DP_\nu|\nu^\c H}$.
By Theorem \ref{turull}, there is an $\c H$-equivariant bijection
 $$f \colon \bigcup_{\theta \in \Delta} \irr{G|\theta} \rightarrow
 \bigcup_{\theta' \in \Delta'} \irr{X|\theta'} \, .$$
By applying Step 3 we are done in this case.

\medskip

{\em Final Step.}~~By Step 5 and Step 8, we may assume that $L/Z$ is a direct product
of non-abelian simple groups of order divisible by $p$ isomorphic to some $S$. Since $K/C\cong L/Z$
 and $C \sbs \zent K$ by Step 2, 
we have $C=\zent K$. 
Let $Q=P\cap K \in \syl p K$. Thus $\norm GP \sbs \norm GQ$.
Furthermore, since $K/C$ and $Z/C$ are normal subgroups of $G/C$ with $K\cap Z=C$,
we have that $Z$ normalizes $CQ=C_{p'} \times Q$. Thus $Z$ normalizes $Q$.
(Note that $\norm G Q <G$ because $\norm K Q <K$.)
Hence $|\norm GQ: Z|<|G:Z|$.

Since $S$ satisfies the inductive Galois--McKay condition,
by Corollary \ref{centralextension} there exist an $\norm G Q$-stable subgroup 
$\norm{K}{Q}\sbs M<K$ and an $\norm G Q\times \c H$-equivariant bijection
$$\Omega \colon \irrp{K|\nu^{\c H}} \rightarrow \irrp{M |\nu^{\c H}}$$
 such that, for every $\theta \in \irrp{K|\nu^{\c H}}$, there is a
 character-degree-ratio preserving $\c H$-equivariant bijection
$$\hat \tau_\theta \colon \irr{G |\theta^{\c H}} \rightarrow \irr{M\norm G Q|\varphi^{\c H}} \, ,$$
where $\varphi=\Omega(\theta)$.
Write $U=M\norm G  Q$ and notice that $H\sbs U<G$ and $U\cap K=M$. 
Moreover, $\hat \tau_{\theta^u}=\hat \tau_\theta$ and $\hat \tau_{\theta^\sigma}=\hat \tau_\theta$ 
for every $\theta \in \irrp K$, $u \in U$ and $\sigma \in \c H$.

\smallskip

By Step 3, there is an $\c H$-equivariant bijection
$$\irrpr{U|\lambda^{\c H}} \rightarrow \irrpr{H |\lambda^\c H} \, .$$
Hence, we only need to construct an $\c H$-equivariant bijection 
$$F\colon  \irrpr{G|\lambda^{\c H}} \rightarrow \irrpr{U|\lambda^{\c H}} \, .$$
By Step 7 we have
 $$\irrpr{G|\lambda^{\c H}}=\bigcup_{\theta \in \Delta} \irrpr{G|\theta} \text{ \ \ and \ \ } \irrpr{U|\lambda^{\c H}}=\bigcup_{\theta' \in \Delta'} \irrpr{U|\theta'}\, ,$$
where $\Delta= \irrp{K|\nu^{\c H}}$ and $\Delta'= \irrp{M|\nu^{\c H}}$. 
We can define $F$  as follows.
If $\chi \in \irrpr{G|\lambda^\c H}$, then $\chi \in \irrpr{G|\theta}$
for some $\theta  \in \irrp{K|\nu^{\sigma}}$ and $\sigma \in \c H$. Such $\theta$ is determined up to $U$-conjugacy.
Define $F(\chi)=\hat \tau_\theta(\chi) \in \irr{U|\varphi}$, where $\varphi=\Omega(\theta)\in \irrp{M|\nu^\sigma}$. (This latter
fact follows from the fact that $\chi$ and $\hat \tau_\theta(\chi)$ lie over the same character of $\zent K =C$ by the $\c H$-triples relations in Corollary
\ref{centralextension}.)

Now, $F$ is well-defined since $\hat \tau_{\theta}=\hat \tau_{\theta^u}$ 
for every $u \in U$. Suppose that $\chi, \chi' \in \irrpr{G |\lambda^{\c H}}$ have
the same image $\xi$ under $F$. If $\theta, \theta ' \in \irr K$  
lie under $\chi$ and $\chi'$ respectively,
then they must be $\norm G Q$-conjugate because $\Omega$ is 
$\norm G Q$-equivariant and $\Omega(\theta)$ and $\Omega(\theta')$ 
lie under $\xi$. Hence injectivity
also follows from the fact that
$\hat \tau_{\theta}=\hat \tau_{\theta^u}$ for every $u \in U$. 
$F$ is clearly surjective, and the proof is finished. 
\end{proof}

Theorem A follows from Theorem \ref{reduction} by taking $Z=1$.

\end{document}